\author{
Ramona Anton$^{\small 1}$,
Nicolae Mihalache$^{\small 2}$
and
Fran\c{c}ois Vigneron$^{\small 3}$
}
\newcommand{\authorramona}{{\small
Sorbonne Univ, IMJ-PRG, CNRS UMR 7586, 75252 Paris, France
}}
\newcommand{\authornicu}{{\small
Univ Paris Est Creteil, CNRS, LAMA, F-94010 Creteil, France and\\
Univ Gustave Eiffel, LAMA, F-77447 Marne-la-Vallée, France
}}
\newcommand{\authorfv}{{\small
\small Universit\'{e} de Reims Champagne-Ardenne,
Laboratoire de Mathématiques de Reims,
UMR 9008 CNRS, Moulin de la Housse, BP 1039,
F-51687 Reims --
\texttt{francois.vigneron@univ-reims.fr}
}}
\newtheorem{thm}{Theorem}
\newtheorem{proposition}{Proposition}
\newtheorem{definition}[proposition]{Definition}
\newtheorem{lemma}[proposition]{Lemma}
\newtheorem{remark}[proposition]{Remark}
\newtheorem{example}[proposition]{Example}
\newcommand{\N}{\mathbb{N}}
\newcommand{\Z}{\mathbb{Z}}
\newcommand{\R}{\mathbb{R}}
\newcommand{\C}{\mathbb{C}}
\newcommand{\CC}{\overline{\mathbb C}}
\newcommand{\RR}{\overline{\R}}
\DeclareMathOperator{\avg}{avg}
\DeclareMathOperator{\argmax}{argmax}
\newcommand\SetDef[2]{\ensuremath{\left\{{#1}\, : \, {#2}\right\}}}
\newcommand\ii[2]{\ensuremath{\llbracket{#1}, {#2} \rrbracket}}
\DeclareMathOperator{\ulp}{ulp}
\renewcommand{\Re}{\operatorname{Re}}
\renewcommand{\Im}{\operatorname{Im}}
\DeclareMathOperator{\eval}{V}
\newcommand{\defequal}{=}
\newcommand{\eg}[1][~]{\textit{e.g.}#1}
\newcommand{\ie}[1][~]{\textit{i.e.}#1}
\newcommand{\F}{\mathcal{C}} 
\newcommand{\A}{\textrm{FPE}}
\def\cod#1{\texttt{#1}}
\begin{document}
\title{Fast Evaluation of Real and Complex Polynomials}
\maketitle

\begin{abstract}  
We propose an algorithm for quickly evaluating polynomials.
It pre-conditions a complex polynomial $P$ of degree $d$ in time $O(d\log d)$, with a low multiplicative constant independent of the precision.
Subsequent evaluations of $P$ computed with a fixed precision of $p$ bits are performed in \textit{average} 
arithmetic complexity $O\big(\sqrt{d(p+\log d)}\big)$ and memory $O(dp)$.
The average complexity is computed with respect to points $z \in \C$,
weighted by the spherical area of~$\CC$.
The worst case does not exceed the complexity of H\"orner's scheme.

In particular, our algorithm performs asymptotically as $O(\sqrt{d\log d})$ per evaluation.
For many classes of polynomials, in particular those with random coefficients in a bounded region of $\C$,
or for sparse polynomials, our algorithm performs much better than this upper bound, without any modification or parameterization.

The article contains a detailed analysis of the complexity and a full error analysis,
which guarantees that the algorithm performs as well as H\"orner's scheme, only faster.
Our algorithm is implemented in a companion library, written in standard \texttt{C} and released as an open-source project~\cite{FPELib}.
Our claims regarding complexity and accuracy are confirmed in practice by a set of comprehensive benchmarks.

\medskip\noindent
\textbf{Keywords:} Algorithms. Polynomials. Fast Evaluation. FPE/FastPolyEval library.\\[1ex]
\textbf{MSC primary:} 68W40\\  
\textbf{MSC secondary:}
03D15, 
68Q25, 
68-04, 
68W01, 
12Y05. 
\end{abstract}

\tableofcontents

\newpage
\section{Introduction}
\label{sect:intro}

The study of polynomials has sparked the interest of many generations of mathematicians and inspired major theoretical developments.
In modern algebra, the notion of group stemmed from the impossibility of solving polynomials
with radicals; abstract rings generalize the properties of $\Z$ and $\Z[X]$. Modern number theory
is indissociable from polynomials and algebraic curves.

Modern analysis too evolved from the prototype of a function space given by polynomials.
A few obvious testimonies to this heritage are
Descartes's notation of $x$, $y$,\ldots{} for the variables of functions, 
the fact that successive approximations of a real number in base $b$ are polynomials in $b^{-1}$,
or the fact that polynomials in $e^{ix_1}, \ldots, e^{i x_n}$ (\ie trigonometric polynomials) are the archetype of periodic functions over $\R^n$.
Smooth functions can be approximated locally by Taylor's polynomial expansions or globally thanks to
the Weierstrass approximation theorem.

Polynomials are also ubiquitous due to their practical interest. 
Greeks and Babylonians used quadratic equations circa 2000~BC to compute the boundaries of their agricultural fields
in order to define fair taxes and trade rules. About 4000 years later, we still handle polynomials and solve polynomial equations,
not just on school benches, but also in real life to find the natural modes of oscillations of engineering structures or the
rate of spread of a virus. Polynomials are at the heart of numerical analysis and appear in particular
as approximations and interpolations of other functions in finite-element methods or through quadrature formulas~\cite{BERMAD},
or as a unifying frame for Fast-Fourier transforms~\cite{Nus82}.
Polynomials are found at the crossroads of science: computer-aided design relies heavily on geometric splines,
polynomials arise naturally in finance~\cite{ACK17}, in biology~\cite{MY20}, etc.
It is actually easy to find more than 50 different families of polynomials named after mathematicians
and that play a central role in various applications.

In most of the applications, having the fastest evaluation algorithm for a given level of accuracy is of the utmost importance.

\medskip
In this article, we propose a novel approach to evaluating complex polynomials in the case of fixed precision
floating-point arithmetic. Our algorithm is designed for better speed without compromising precision,
not directly for improving the precision of the results (though, for a given cost of computations,
 it may be used to achieve a higher precision than can be reached with the current, more costly, algorithms).
The algorithm is designed for repeated single-point evaluations. A typical application
is Newton's method to find one single root, where the sequence of evaluation points is not initially known.
However, the algorithm can also be used as an embarrassingly
parallel multi-point evaluator, which makes it highly versatile.

\subsection{Existing evaluation schemes}

Let us review briefly the state of the art regarding polynomial evaluation.\newpage
\begin{definition}[Complexity]\label{def:complexity}\quad
\begin{itemize}
\item Let us denote by $\eval_d$ the arithmetic complexity (number of arithmetic operations,
with the convention that 1 operation is a multiplication followed by an addition$^\ast$\footnote{$^\ast$ In most
hardware multiply-accumulate operations are implemented in one cycle, as per IEEE 754 \cite{IEEE}.})
of evaluating a polynomial of degree $d$. We will denote by  $\eval_d(k)$
the \textit{arithmetic complexity} of simultaneously computing $k$ values of a polynomial of degree~$d$.
\item
When all the computations are performed with a fixed precision of $p$ bits$^{\ast\ast}$\footnote{$^{\ast\ast}$ Note that, in this case, the number of exact digits in the result may be significantly smaller than~$p$. Performing computations to ensure $p$ exact digits could require  intermediary computations with arbitrarily high precision if the evaluation point is near a zero of the polynomial or a zero of some arbitrary subexpression that will cancel itself out (see Figure~\ref{fig:cancel} and Remark~\ref{rmk:cancel} below).}, we denote by $\eval_d(k,p)$
the corresponding \textit{bit complexity}$^{\ast\ast\ast}$\footnote{$^{\ast\ast\ast}$ Equivalent to the computation time, up to compiler and hardware optimizations or limitations.}
(number of bit operations).
\end{itemize}
\end{definition}
\noindent
One has $\eval_d(k,p) = M(p) \eval_d(k)$ where $M(p)$ is the bit complexity of the multiply-accumulate
of two floating-point numbers with precision $p$.
Typically, one has:
\begin{equation}\label{Mp}
M(p) = \begin{cases}
O(p^{1.585}) & \text{Karatsuba},\\
O(p^{1.465}) & \text{Toom-Cook},\\
O(p \log p \log\log p) & \text{Sch\"onhage-Strassen}.
\end{cases}
\end{equation}
For example, $M(128) \simeq 10^3$ bit instructions for a Toom-Cook multiplication.
The choice between these methods is usually driven by the competition between the value of $p$ and
the size of the hidden prefactor. The acceptable level of technicality in the code can also be taken
into consideration.

\subsubsection{Single point evaluation}

H\"orner's method ensures that, in general, $\eval_d = d$ and $\eval_d(1,p)=M(p) d$ when the polynomial is defined by its coefficients.
Since Ostrowski \cite{OST54} and Pan \cite{PAN66}, it has been well known that for evaluating a complex polynomial $P \in \C[X]$ of degree $d$ at a given point $z \in \C$, $d$ multiplications and $d$ additions are both necessary and sufficient. That is, H\"orner's scheme is optimal for
one point-wise evaluation of a general polynomial.

Some classical evaluation schemes offer a similar order of complexity with more balanced 
and better parallelizable intermediary computations to improve numerical stability and take advantage of modern hardware,
like Estrin's divide and conquer method~\cite{EST60},~\cite{Mor13}, which is \eg implemented in the Flint library~\cite{FLINT}.
As a side note, H\"orner's method is at the heart of a beautiful graphical construction for finding the real roots
of a polynomial, known as Lill's method~\cite{KAL08}.

\medskip
For iteratively defined polynomials, \ie a family $P_{n+1}(z)=Q(P_n(z))$, evaluation is obviously
more efficient: in this case,  one gets $\eval_d=O(r \log_r d)$ where $r=\deg Q$.
Similarly, any intermediary power $(z^k)_{0\leq k\leq d}$ of $z$ can be computed recursively in $O(\log_2 k)$.
In particular, sparse polynomials that contain only $\sigma$ non-zero coefficients can be evaluated
in this fashion in $O(\sigma\log_2 d)$ operations.

\subsubsection{Multi-point evaluation}

If the same polynomial has to be evaluated repeatedly,  one obviously seeks to obtain $\eval_d(k) \ll k \eval_d$
and there are better strategies to reduce the average computing time.
Knuth~\cite{KNU62} proposed a preprocesing based on finding all the zeros of the odd part of the polynomial (with Eve's variant \cite{EVE64} in the general case) that gains a factor of $2$ for the number of multiplications.
It then brings down the cost of subsequent evaluations to~$\eval_d=[\frac{1}{2}(d+4)]$.

\medskip
A common case where simultaneous evaluation brings a substantial benefit is the evaluation of trigonometric polynomials
along a regular mesh on the unit circle.
Computing the values $P\left(e^{2i\pi k/(d+1)}\right)$ for $0\leq k\leq d$ where $P(z)=\sum a_j z^j$  can be performed by Fast-Fourier
Transform (FFT) algorithms \cite{DL42}, \cite{CT65}, \cite{R00} in $\eval_d(d) = O(d\log d)$ operations by taking advantage of
(\ie factoring) the matrix structure
\[
\begin{pmatrix}
\hat{a}_0 \\\hat{a}_1\\ \vdots\\\hat{a}_{d}
\end{pmatrix}
\defequal
\begin{pmatrix}
1 &1 & \cdots & 1\\
1 & w & \cdots & w^{d}\\
\vdots & \vdots & & \vdots \\
1 & w^{d} & \cdots & w^{d^2}
\end{pmatrix}\begin{pmatrix}
a_0 \\a_1\\ \vdots\\a_{d}
\end{pmatrix}
=
\begin{pmatrix}
P(1)\\
P(w)\\
\vdots\\
P(w^{d})
\end{pmatrix}
\]
with $w=e^{2i\pi/(d+1)}$.
As an evaluation algorithm, this method brings down the average cost per computed value
to $O(\log d)$, the price to pay being that one single evaluation
cannot be (efficiently) performed alone.
The fact that the FFT is numerically well behaved \cite{PST02} and essentially
involutive brings evaluation and interpolation to an equal footing
and is the key that unlocks most of its applications.

Note that there are variants of the FFT method for computing approximations of the values of $P$
along a non-uniform mesh \cite{PST01}. The evaluation points are however constrained to remain
on a circle. Anticipating on our algorithm (see Section~\ref{sect:algo}), let us point out that there is indeed a benefit
in sorting the evaluation points according to the size of $|z|$ and that
we are  able to discard the geometric restriction of cocyclicity.

\medskip
Fast multipoint evaluation on a general set of points is  possible and
relies on a few standard tricks in polynomial arithmetic, which we first recall briefly.
Polynomial multiplication can be computed in $O(d^{1.585})$ with Karatsuba's algorithm. It is based on the identity
\[
(P z^k+ R) (Q z^k + M) = RM+ \left( (P+R)(Q+M)-RM-PQ \right) z^k+PQ z^{2k}\, ,
\]
which boils down to 3 multiplications of smaller polynomials (recursively optimized with $k\simeq d/2$) and coefficient shifts.
Over~$\C[X]$ or more generally if the field admits a discrete Fourier transform, one can use the FFT to conjugate the multiplication of polynomials to a pointwise multiplication of enough interpolation
points on the unit circle, with an overall cost of $O(d \log d)$. Next, fast division is based on the reversal of the order of the coefficients of the polynomial $P$,
\ie $\widetilde{P}(z) \defequal  z^{\operatorname{deg} P} \, P(1/z)$ and the identity
\[
P(z) = Q(z)M(z)+R(z) \quad\Longleftrightarrow\quad \widetilde{P}(z) =
\widetilde{Q}(z)\widetilde{M}(z) + z^{\operatorname{deg} P-\operatorname{deg} R} \,\widetilde{R}\left(z\right).
\]
The quotient $\widetilde{M}(z)$ can then be computed as $\widetilde{P}(z) \cdot \widetilde{Q}(z)^{-1} 
\operatorname{mod} z^{\operatorname{deg} P-\operatorname{deg} Q+1}$.
The series expansion of the inverse is computed recursively with Newton's method
in $\C[[X]]$:
\[
J_0=\widetilde{Q}(0)^{-1} \quad\text{and}\quad J_{n+1}(z) = J_n(z)\cdot \left( 2-J_n(z)\widetilde{Q}(z) \right),
\]
which ensures that $J_n(z) = \widetilde{Q}(z)^{-1} \operatorname{mod} z^{2^n}$.
Ultimately, this algorithm brings the overall cost of computing the division of a polynomial~$P$ to~$O(d \log d)$
where $d=\operatorname{deg} P$.

\medskip
The fast multipoint evaluation algorithm allows us to compute simultaneously $k$ values with a cost of $\eval_d(k) = O((d+k) \log d \log kd)$.
The central idea is a divide and conquer recursion.
One splits the evaluation points in two families $\mathcal{Z}_1$, $\mathcal{Z}_2$ of size $k/2$.
With the previous fast division scheme, one computes remainders modulo polynomials that vanish either on $\mathcal{Z}_1$ or on $\mathcal{Z}_2$.
The problem is thus reduced to the evaluation of the two remainders on sets of points that are half-sized:
\[
P(z) = Q(z) \cdot \prod_{\zeta\in \mathcal{Z}_j} (z-\zeta) + R_j(z) \quad\Longrightarrow\quad \forall \zeta\in\mathcal{Z}_j, \quad P(\zeta) = R_j(\zeta)
\qquad (j=1,2) \,.
\]
Conversely, one can reuse the structure of the intermediary computations to interpolate  with a similar total cost,
\ie compute the coefficients of~$P$ from the values $P(z_j)$ at $k=d+1$ distinct points $(z_j)_{j=0,\ldots,d}$.
The method can be refined~\cite{P95},~\cite{R99} to improve the poly-logarithmic factor in the arithmetic complexity.

\medskip
For finite precision arithmetic, \ie approximations of order~$2^{-p}$, 
advanced algorithms reach a theoretical bit complexity of $\eval_d(d,p) =O(d(d+p+\omega)\log F)$
to compute the values of~$P(z)$ at~$d$ complex points. In this formula,
one takes $\omega\geq0$ such that $|z|+\max|a_j|<2^\omega$ and~$\log F$ denotes logarithmic factors;
see~\cite{S82},~\cite{KS16} for more details.

Of course, the comparison with the standard multi-point evaluation is not clear-cut because bit complexity depends on the
size of the data while arithmetic complexity does not.
Theoretically, these variants thrive when $d+p+\omega \ll M(p)$ \ie[,] roughly speaking, when for example $d \ll p^{1.585}$.
Even though the multiplicative constants and logarithmic factor are either large or hard to track, one can
expect these algorithms to be competitive for moderately large degrees ($d\lesssim 10^4$) and a substantial
fixed precision ($128\leq p \leq 300$).

\subsubsection{Practical considerations beyond arithmetic complexity}

While the fast multipoint methods optimize the overall cost of multiple evaluations to~$\eval_d(d)=O(d (\log d)^2)$,
the number of operations that are involved in the computation
of one single value (or one single coefficient in the case of interpolation) exceed the number of naive operations that would be required to compute that value alone. Mechanically, one can thus expect a loss of precision.

A very detailed analysis of the numerical instabilities \cite{KZ08} points out the Wilkinson-type \cite{W84} expansion
of $\prod_{j=1}^{\ell} (z-j)$ as the main culprit,  which leads to ill-conditioned input to the subsequent polynomial divisions.
On the other hand \cite{KS16} exploit the fact that these same divisors are monic to ensure stable divisions when the required precision (in number of bits) dominates the degree.

\medskip
In many situations, in order to guarantee the numerical stability of the the multipoint method,
the precision of the numbers has to be much larger than $d$.
For example, S.~K\"ohler and M.~Ziegler \cite{KZ08}  conclude that,
for a specified level of accuracy, it is usually necessary to increase the precision
of intermediary computations to the point where the benefit over H\"orner's~$\eval_d(d)=O(d^2)$ naive
scheme is not significant.
The conclusion of A.~Kobel and M.~Sagraloff \cite{KS16} is more nuanced,
as they insist on the fact that the extra precision is only required for intermediary computations.
However, as one may need more than $O(d\log d)$ bits of memory per coefficient,
it quickly becomes impractical as $d$ increases (see \cite[Corollary 8]{KS16}).

\medskip
On modern computing machines, even on super-computers, the workload is often dominated by
data movements (disk, memory and cache access) and not by the computing power (usually measured
in floating-point operations per second, or \texttt{FLOPS}).
When implementing the multipoint evaluation algorithms,
the memory size limitations impose rather tight bounds on the degree (say $d\lesssim 10^6$).
This raises the question of how to compute efficiently the values of a giga-polynomial.
Our algorithm (see Section~\ref{sect:algo}) does not present this limitation.

\subsubsection{Alternatives}

Let us close this tour of the literature by mentioning briefly some less common methods of evaluation,
which have their own niche of applications.

\medskip
If working with extended precision is not an option, various methods based on a compensation
of H\"orner's Algorithm~\cite{SW2004}, \cite{GLL2005}, \cite{S2007} can improve the precision
of the standard evaluation scheme for a moderate increase in complexity.

\medskip
Choosing another basis instead of the canonical monomial basis of $\C[X]$ may provide
better numerical stability. 
Evaluation algorithms on Newton's (interpolation) basis have similar complexity to the ones exposed above \cite{BS05}, sometimes even with better constants. We refer to \cite{F12}, \cite{F08}
for an in depth review of the benefits of the Bernstein basis and its industrial applications.
The complexity of evaluating the Bernstein basis functions has recently been improved in \cite{CW20} and is now~$O(d)$, which is still huge compared to evaluating $z^d$ but makes it a viable option for \eg $d\lesssim 10^3$.
In this article, we will not investigate further  the question of generalizing our algorithm to other bases.

\medskip
For the sake of completeness, let us also mention that better performances as low as~$O(\sqrt{d})$ can be achieved
for non-scalar evaluations \cite{PS73}, \cite{F18}, \ie if one computes polynomials of matrices where the complexity
of scalar operations is simply discarded. However, these algorithms do not bring any improvement
upon H\"orner's method, when they are applied to the evaluation of scalar polynomials.

\subsection{New evaluation scheme}

In this article we propose a simple algorithm and its practical implementation as a~\texttt{C} library~\cite{FPELib}
that brings down the average cost for the repeated evaluation of all polynomials
and never exceeds H\"orner's complexity in general.

\begin{figure}[H]
\captionsetup{width=.95\linewidth}
\begin{center}
\includegraphics[width=\textwidth]{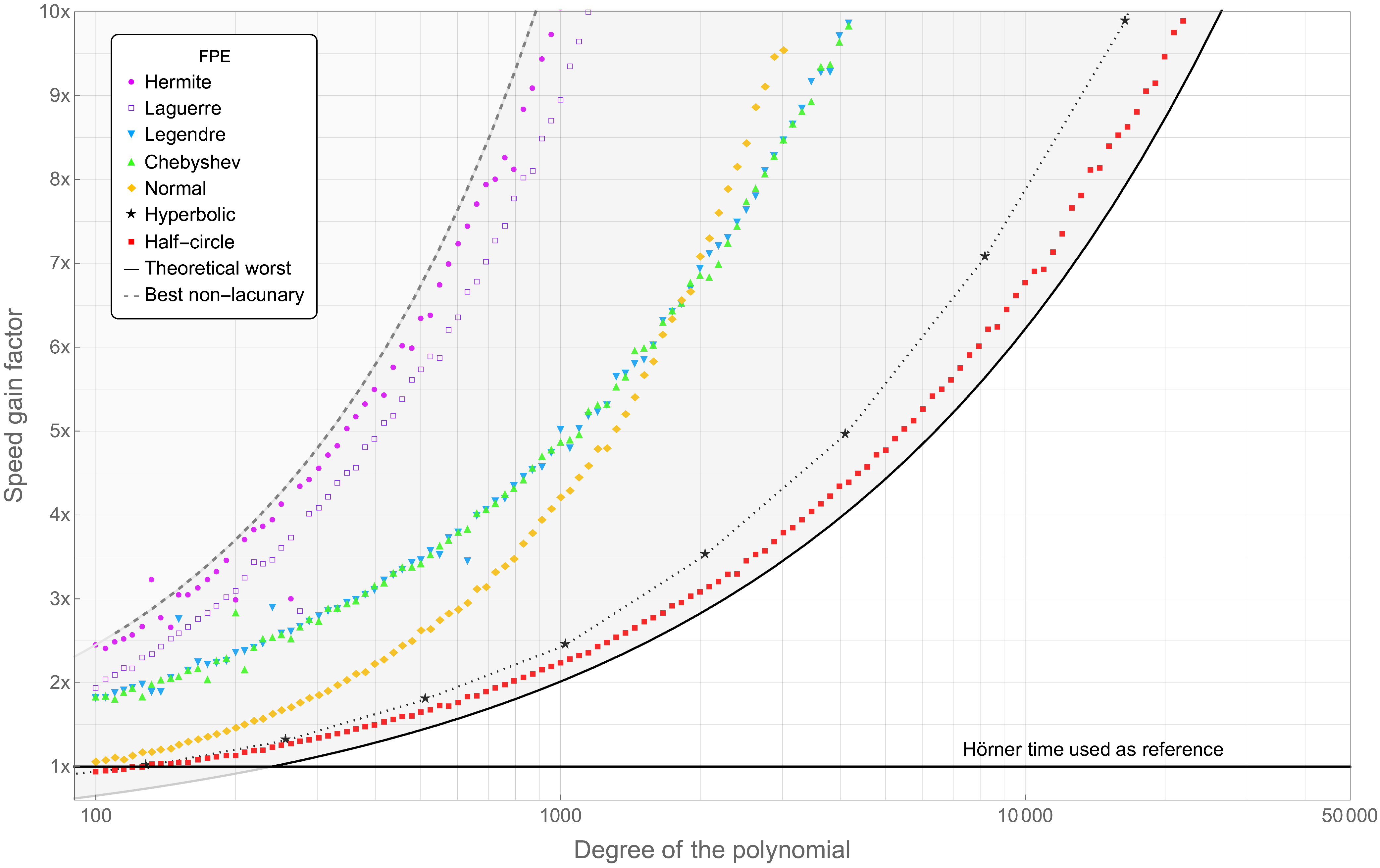}
\caption{\label{fig:HornerFPETheory}
The average speed gain of our FPE (\emph{Fast Polynomial Evaluator}) algorithm over H\"orner's method for computations with $p=53$ bits (using
\texttt{MPFR} numbers; \cite{MPFR}).
The solid curve corresponds to bound~\eqref{eq:asymptGain} that follows from Theorem~\ref{thm:algo}.
The dashed one corresponds to the example at the end of Section~\ref{par:algo} where one evaluation point has
the same complexity as H\"orner, but the average complexity is favorable. Note that lacunary polynomials may lead to even
higher gain factors. 
The data points are actual benchmarks (see Section~\ref{sec:bench}).
The case of the red squares (half-circle) is studied in detail in Section~\ref{par:optimal}.
}
\end{center}
\end{figure}

\bigskip
More precisely, our algorithm, called \A{} or \emph{Fast Polynomial Evaluator} (see Section~\ref{par:algo})
pre-conditions $P\in \C[X]$ in time $O(d\log d)$ with a low multiplicative constant that does not depend on
the precision. Subsequent evaluations of~$P(z)$ with a fixed precision of~$p$ bits are performed in \textit{average} 
arithmetic complexity
\begin{equation}\label{eqComplexity}
\avg_{\CC}\eval_d=O\big(\sqrt{d(p+\log d)}\big) \,.
\end{equation}
The constant is small and explicit and is given by~\eqref{eq:compAlgo} below.
The memory requirement is~$O(dp)$.
The average of the complexity used in~\eqref{eqComplexity} is taken
with respect to points $z \in \C$  weighted by the spherical area of $\CC$.
A similar estimate holds for real polynomials and a uniformly distributed
evaluation point along the circle $\R\cup\{\infty\}$.

\medskip
As illustrated in Sections~\ref{par:optimal} and~\ref{sec:bench},
for many particular classes of polynomials, in particular for sparse polynomials or those with random coefficients
confined in a bounded region of $\C$, our algorithm performs much better than the upper bound~\eqref{eqComplexity}.

\medskip
The \A{} algorithm has many interesting features. One can guarantee that the result is \emph{as precise} as H\"orner's method.
Pointwise, the complexity of \A{} does not exceed that of H\"orner (\ie $V_d \leq Cd$).
In case of equality for some $z_0\in\C$ (see Remark~\ref{rmk:betterIdea}),
one has
\[
\avg_{\CC}\eval_d = O\left( (p+\log d) \left(1+\left|\log\frac{d}{p}\right|\right)\right) \,,
\]
which is even more advantageous than~\eqref{eqComplexity}.
This radical difference between the pointwise and the average complexity is a strong incentive in favor of studying averages.
The \A{} algorithm is embarrassingly \emph{parallel} and can be implemented on any set of evaluation points, \emph{without constraints} of
size or of geometric structure. New evaluation points can be added on the fly.
These properties make the \A{} algorithm particularly well suited for a root finding scheme with Newton's method.

\medskip
For low-precision computations the theoretical speed factor of \A{} over H\"orner's method is illustrated in Figure~\ref{fig:HornerFPETheory}.
Benchmarks of our implementation will be presented in Section~\ref{sect:code} and, in particular, the analysis of the influence
of the preprocessing phase over the global cost (it remains minimal).

\medskip
The cornerstone idea at the foundation of the~\A{} algorithm is \emph{lazy polynomial evaluation} (see Section~\ref{par:lazy}): 
adding two finite precision numbers is only necessary if their orders of magnitude are close enough that their bits will interact.
Monomials tend to have extremely diverse orders of magnitude, which means that the value of a polynomial at a given point
is dictated by only a small subset of its monomials. The second ingredient is a \emph{geometric selection principle},
\ie the ability to identify this parsimonious representation
with geometric tools, which, in practice, boil down to the computation of the concave cover of a dataset (see Figure~\ref{fig:concave}).

\medskip
This article features a detailed analysis of the complexity (see Theorem~\ref{thm:algo}) and a precise error analysis (see Theorem~\ref{thm:equivAlgo}
and Figure~\ref{fig:accuracy}) of the~\A{} algorithm. Both are put to the test in systematic benchmarks presented in Section~\ref{sec:bench}.
The preview offered in Figure~\ref{fig:HornerFPETheory} illustrates the extent to which we have explored the theoretical 
and practical envelope of this new algorithm.

\subsection{Structure of the article}

The structure of the text is the following.

\bigskip
In Section~\ref{par:arithmetic} we detail the fundamentals of finite precision arithmetic for a general audience
and discuss the specificities of complex numbers.
Subsection~\ref{par:equivModP} is dedicated to the various notions of \textsl{closeness} in finite precision
(equivalence, adjacency and similarity modulo phase-shift), which play a central role in the proof of the correctness of our algorithm.

In Section~\ref{sect:analyse} we briefly introduce some geometric tools that will be needed
to state the \A{} algorithm and prove its complexity. 
This section contains only definitions and statements; the proofs of the corresponding theorems can be found in Appendix~\ref{par:geom_proof}.

In Section~\ref{sect:algo} we describe and analyze the~\A{} algorithm and state our two main results.
The correctness of the algorithm and the associated error analysis is Theorem~\ref{thm:equivAlgo}.
The result regarding complexity is Theorem~\ref{thm:algo}.
The proofs are done in  the next two sections:
Theorem~\ref{thm:algo} is proved in Section~\ref{sect:comp} and
Theorem~\ref{thm:equivAlgo} in Section~\ref{sect:err}.

Section~\ref{sect:applic} explores a few examples of possible applications of the \A{} algorithm and should be of general interest.
Section~\ref{sect:code} is dedicated to presenting our implementation in the \texttt{C} language, which we are publishing~\cite{FPELib} as an open source project.
Our implementation uses both machine floating-point numbers and MPFR arbitrary precision numbers (see \cite{MPFR}). 
Appendix~\ref{FPETasks} contains a listing and description of the tasks that can be performed with it.
Extensive numerical benchmarks are presented in subsection~\ref{sec:bench} and confirm the theoretical predictions regarding
the complexity and error analysis of the \A{} algorithm.

\bigskip
In order to keep this article accessible to the widest possible audience,
we provide comprehensive definitions of all notions and fully detailed proofs.
We also tried to keep the sections as independent as possible.
Overall, the key ideas are of a geometric nature. There are strong similarities between the
geometrical reasonings of~Section~\ref{sect:analyse} and Appendix~\ref{par:geom_proof}
and the presentation of the algorithm in~Section~\ref{sect:algo}
(for example, compare Figures~\ref{fig:strip} and~\ref{fig:concave}, or~\ref{fig:shear} and~\ref{fig:risingSun}).
It is our belief that one may enlighten the other.

However, a reader interested in understanding the algorithm, but not the proof of its correctness and complexity,
may safely read only the beginning of Section~\ref{par:arithmetic} and skip Subsection~\ref{par:equivModP},
then read Section~\ref{sect:analyse} before proceeding to Sections~\ref{sect:algo}, \ref{sect:applic} and~\ref{sect:code}.
The more theoretical Sections~\ref{sect:comp}, \ref{sect:err} and Appendix~\ref{par:geom_proof} may be skipped.

\medskip
Finally, as a convenience for all readers, Appendix~\ref{app:index} recapitulates the notations used throughout the article.

\section{A bit of finite precision arithmetic}
\label{par:arithmetic}

In this section we introduce notations and tools that will be useful in the statement of the algorithm
and in all subsequent analysis. We refer to,~\eg[,]~\cite{HFPA2018}, \cite{G1991}
or~\cite{IEEE} for further details on finite precision arithmetic.

\subsection{General considerations}

Let us start with a word of caution:
the exact evaluation of complex polynomials at arbitrary points is not possible in practice.
Attempting to evaluate an explicit polynomial (that is, given by its coefficients)
near one of its roots will produce a large cancelation of the digits. 
For example, $P(z)=z-z_0$ evaluated at $z$ such that $|z-z_0|<2^{-n}|z_0|$ produces a result of
order at most $2^{-n}|z_0|$, effectively losing $n$ leading bits from the precision that was used to
express~$z_0$ and~$z$. More generally, when $P(z)$ has many terms,
cancelations can occur not only at the roots but among all  polynomial subexpressions of~$P$ (see Figure~\ref{fig:cancel}).
To guarantee that the result has any number of significant exact digits,
unbounded precision would have to be used for intermediary computations.

As this is not practical, we will focus only on computations done with some \textit{fixed} precision $p$.
Our algorithm does not attempt to produce results that are more accurate than those of H\"orner's method: we
want to produce results of similar accuracy, only faster.
The error analysis of H\"orner's scheme is classical and we refer the reader to \cite{O1979}, \cite{M1983}.
For a more general analysis with recursive basis functions, see~\cite{BJS2013}.

\medskip
When using fixed precision numbers, additions and subtractions are the main
sources of errors because they can produce a cancelation of the most significant digits.
After such an occurence, the relative uncertainty is multiplied by $2^s$,
where $s$ is the number of canceled bits.
Fortunately, having $s$ bits canceled is conditioned by the fact that the numbers
have the same scale and then cancelation only occurs with a probability $2^{-s}$.

\medskip
Our algorithm exploits the limitations of finite precision arithmetic
to discard unnecessary computations and thus obtain significant gains on the computation time.


\begin{figure}[H]
\begin{center}
\setlength{\unitlength}{1cm}
\includegraphics[width=.45\textwidth]{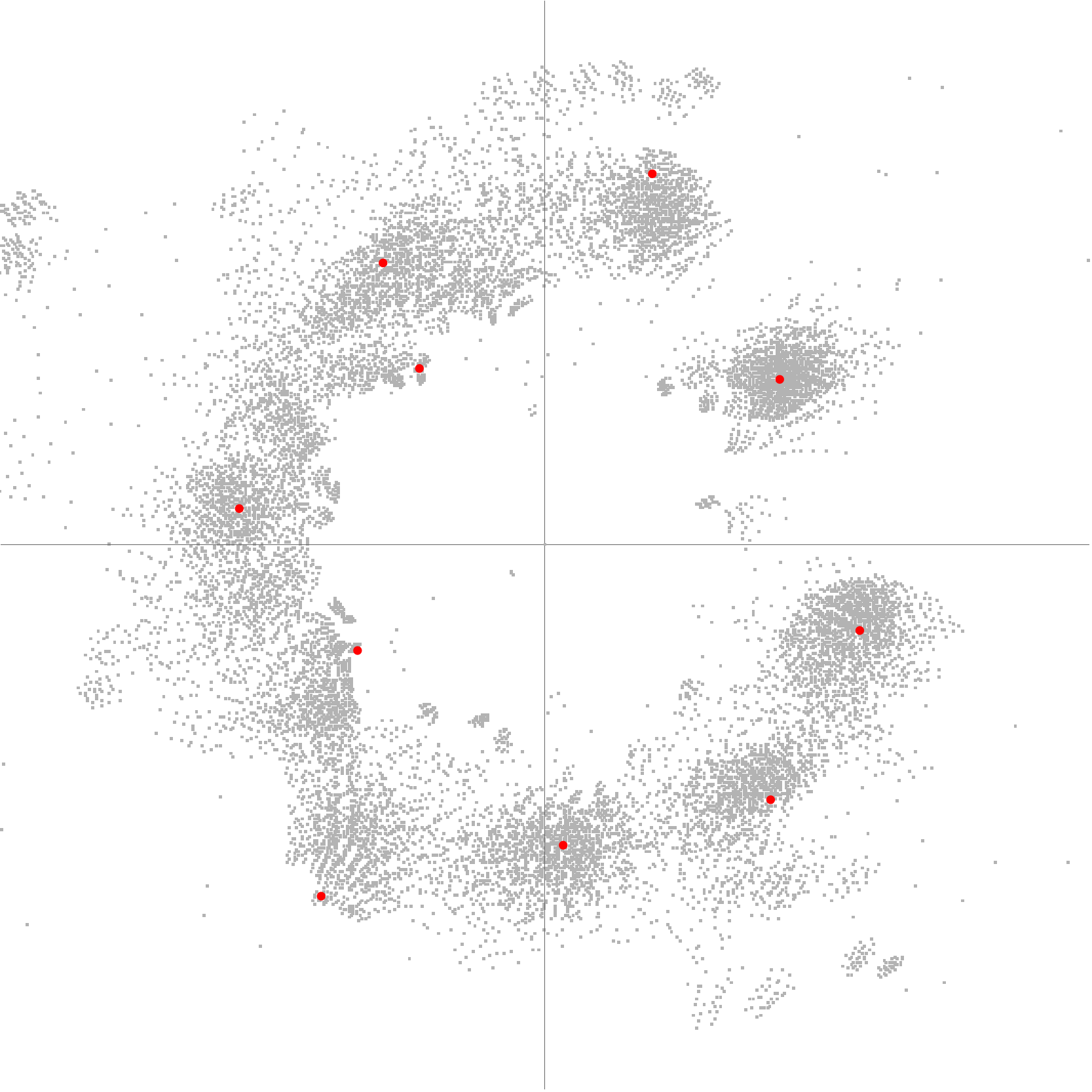}\qquad
\includegraphics[width=.45\textwidth]{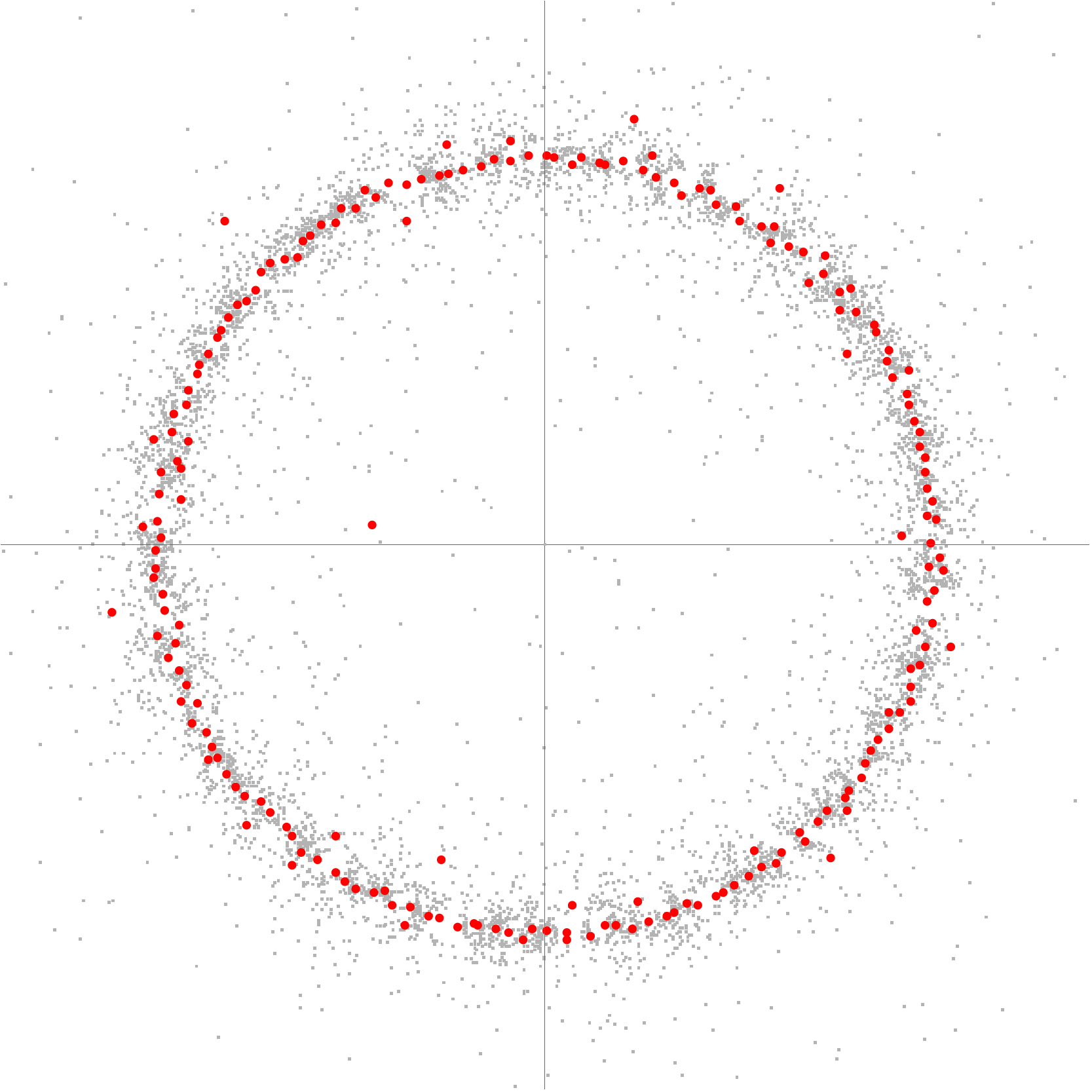}
\captionsetup{width=.95\linewidth}
\caption{\label{fig:cancel}%
The roots of a polynomial (red) and some polynomial subexpressions (gray).
The real and imaginary parts of the coefficients are independent normal distributions.
The graphics are zoomed in on the most significant part of $\C$.
\textbf{Left}: The roots of all $2^{11}-1$ non-trivial polynomial subexpressions of a polynomial of degree~10.
Note that cancelations may occur in a non-uniform way.
\textbf{Right}: The roots of polynomial subexpressions formed by consecutive $\frac{1}{4}\deg P$ monomials,
when $\deg P =200$. In both cases, the coalescence of the roots and cancelation points around the unit circle is expected for high degrees
because of Hammersley's theorem~\cite{Ham1956}, \cite{SZ2003}.}
\end{center}
\end{figure}

\subsection{Lazy addition in finite precision arithmetic}

A floating-point number $\xi$ represented with a precision$^\ast$ $p$ in base 2 is written
\footnote{$^\ast$ In the MPFR library, \eqref{equ:prec} is said to have precision $p+1$.}
\begin{equation} 
\label{equ:prec}
\xi = \pm 2^{n} \times 0.1\xi_1\xi_2 \ldots \xi_p  
\end{equation}
where the \textit{bits} $\xi_i \in \{0,1\}$ for $i \in \ii{1}{p} \defequal  [1, p] \cap \Z$.
The number $n\in \Z \cup \{-\infty\}$ is called the \textit{exponent} of~$\xi$.
By convention, $n=-\infty$ when $\xi=0$.
The smallest representable increment of $|\xi|$ is
\begin{equation}
\ulp(\xi) \defequal  2^{n} \times 0.00 \ldots 01 = 2^{n-p-1} \,.
\end{equation}
The name stands for \textit{unit in the last place}.
To ensure a unique representation of all real numbers we always assume a rounding to the
nearest representable number and choose a rounding away from zero at the tie.

\medskip
A key observation for additions in finite precision $p$ is that
\begin{equation}\label{equ:sum}
\xi+\eta = 
\begin{cases}
\xi & \text{if } n > m + p + 2,\\
\eta & \text{if } m > n + p + 2,
\end{cases}
\end{equation}
where $n$ and $m$ are the respective exponents of $\xi$ and $\eta$.
This means that by simply reading the values of $n$ and $m$ and comparing them to $p$,
we can avoid costly operations, especially if computing one of the terms $\xi$ or $\eta$ 
requires additional steps as is the case when they are monomials $a_k z^k$.

\subsection{Scale of a complex number}\label{par:scale}

Let us define the \textit{scale} of a number $z \in \C^*$ by
\begin{equation}
\label{equ:dsz}
s(z) \defequal  1+ \lfloor \log_2 |z| \rfloor \in\Z \,,
\end{equation}
where $\lfloor \alpha \rfloor$ is the floor of $\alpha$.
By convention,~$s(0) \defequal  -\infty$.
The scale is a logarithmic representation of the order of magnitude of $z$.
For example, with the notations of~\eqref{equ:prec}, one has $s(\xi) = n$, \ie 
for floating-point numbers, the scale coincides with the exponent;
moreover
\begin{equation}\label{def:ulp}
\ulp(\xi)=2^{s(\xi)-p-1} \,.
\end{equation}
In general, $s(z)=\sigma$ if and only if $\sigma\in\Z$ and
\begin{equation}\label{eq:sigma}
2^{\sigma-1}\leq |z|<2^\sigma \,.
\end{equation} 
In particular, for $z=x+iy\in\C^\ast$, one has $\frac{1}{2}\leq |z|2^{-\max(s(x),s(y))} < \sqrt{2}<2$,
thus
\begin{equation}\label{equ:sz}
s(z) \in \max\{s(\Re z),s(\Im z)\} +\{0,1\} \,.
\end{equation}
For example $s(3+2i)=2=s(3)$ and $s(3+3i)=3=s(3)+1$.

\bigskip
As $\lfloor\alpha+\beta\rfloor\in\lfloor\alpha\rfloor+\lfloor\beta\rfloor+\{0,1\}$ and $1+\lfloor\alpha\rfloor-\alpha\in(0,1]$,
we claim the following bounds.
\begin{lemma}
For any  $z, z' \in \C$ and $n\in\Z$, one has~:
\begin{eqnarray}
s(2^n z) & = & n+s(z) \label{eq:exact2pow}\\
\label{equ:sp}
s(zz') - s(z) - s(z') & \in & \{-1, 0\}, \\
\label{equ:sn}
s(z^n) - n\log_2|z|  & \in & (0,1], \\
\label{equ:ss}
s(z \pm z') & \leq &\max\{s(z),s(z')\} + 1
\end{eqnarray}
and, for a family $z_1,\ldots,z_N\in\C$~:
\begin{equation}\label{equ:bs}
s\left(\sum_{j=1}^N z_j \right) \leq \max_{1\leq j\leq N} s(z_j) + s(N) \,.
\end{equation}
\end{lemma}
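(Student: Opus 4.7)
The plan is to work directly from the definition $s(z) = 1 + \lfloor \log_2|z|\rfloor$, or equivalently from the characterization~\eqref{eq:sigma}: $s(z) = \sigma$ iff $2^{\sigma-1} \leq |z| < 2^\sigma$. Every bound in the lemma is about the size of $|\cdot|$ of the left-hand side compared to $2^{(\text{right-hand side})}$, so proving each item reduces to a one-line modulus estimate combined with the two elementary remarks recalled before the statement: $\lfloor\alpha+\beta\rfloor \in \lfloor\alpha\rfloor+\lfloor\beta\rfloor+\{0,1\}$ and $1+\lfloor\alpha\rfloor-\alpha \in (0,1]$.

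First I would dispose of~\eqref{eq:exact2pow}: since $n\in\Z$, $\log_2|2^n z| = n + \log_2|z|$ and the floor commutes with the addition of an integer, giving $s(2^n z) = n + s(z)$ exactly. For~\eqref{equ:sp}, set $\alpha = \log_2|z|$, $\beta = \log_2|z'|$; then $\log_2|zz'| = \alpha+\beta$ and applying the first bracketed remark yields $s(zz') = 1 + \lfloor\alpha+\beta\rfloor \in 1 + \lfloor\alpha\rfloor + \lfloor\beta\rfloor + \{0,1\} = s(z)+s(z') + \{-1,0\}$. For~\eqref{equ:sn}, set $\alpha = n\log_2|z|$; by definition $s(z^n) = 1+\lfloor\alpha\rfloor$, and the second bracketed remark gives $1+\lfloor\alpha\rfloor - \alpha \in (0,1]$ directly.

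The sub-additive bounds~\eqref{equ:ss} and~\eqref{equ:bs} follow from the triangle inequality. For~\eqref{equ:ss}, $|z\pm z'|\leq 2\max(|z|,|z'|) < 2\cdot 2^{\max(s(z),s(z'))} = 2^{\max(s(z),s(z'))+1}$, and the characterization~\eqref{eq:sigma} then forces $s(z\pm z')\leq \max(s(z),s(z'))+1$. For~\eqref{equ:bs}, the same reasoning gives $|\sum z_j| \leq N\max_j|z_j| < N\cdot 2^{\max_j s(z_j)}$; since $N\leq 2^{s(N)}$ (because $s(N)=1+\lfloor\log_2 N\rfloor \geq \log_2 N$ for $N\geq 1$), we obtain $|\sum z_j|<2^{\max_j s(z_j)+s(N)}$ and conclude as before.

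No step looks delicate; the only minor trap is to handle the degenerate cases where some $z_j = 0$ or $z = 0$, for which $s = -\infty$ makes every inequality trivially true, so these can be dismissed in one line at the start of the proof. Likewise the edge case $z\pm z' = 0$ in~\eqref{equ:ss} is covered by the convention $s(0) = -\infty$.
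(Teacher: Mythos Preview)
Your proposal is correct and follows essentially the same approach as the paper: both argue each item via a one-line modulus estimate combined with the floor-function identities recalled before the statement and the characterization~\eqref{eq:sigma}. The only cosmetic difference is that for~\eqref{equ:sp} the paper bounds $|zz'|$ directly via $2^{\sigma+\sigma'-2}\leq |zz'|<2^{\sigma+\sigma'}$ rather than invoking $\lfloor\alpha+\beta\rfloor\in\lfloor\alpha\rfloor+\lfloor\beta\rfloor+\{0,1\}$, but these are equivalent.
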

\noindent%
Due to the cancelation of significant bits, the scale of a sum may be much smaller (even~$-\infty$) than
the largest scale of the terms involved. 

\proof
The identity~\eqref{eq:exact2pow} is immediate.
For $s(z)=\sigma$, $s(z')=\sigma'$, one has $2^{\sigma+\sigma'-2}\leq |zz'|<2^{\sigma+\sigma'}$
and~\eqref{equ:sp} follows from~\eqref{eq:sigma}.
The estimate~\eqref{equ:sn} follows from $|z^n| = 2^{n\log_2|z|}$ \ie
$s(z^n) = 1+\lfloor n\log_2|z|\rfloor$.
For sums, we write $|z\pm z'|\leq 2\max\{|z|,|z'|\}<2^{1+\max\{s(z),s(z')\}}$ hence~\eqref{equ:ss}.
For $N$ terms, \eqref{equ:bs} follows from:
\[
\left|\sum z_j\right|\leq N \max |z_j| < 2^{\max s(z_j) + \log_2 N} < 2^{\max s(z_j) + \lfloor\log_2 N\rfloor+1} \,.
\]
For the last inequality, note that we could use the ceiling function $\lceil \log_2 N\rceil$ instead of~$s(N)$ for a slightly tighter estimate
when $N$ is a power of 2.
\qed

\subsection{Equivalence and adjacency modulo finite precision}\label{par:equivModP}

The topology induced by finite precision arithmetic is surprisingly subtle.
In this subsection, we introduce three distinct binary relations that express the \emph{proximity}
between a floating-point number and a real or complex number. The complex case is even more subtle
and is dealt with last.

\begin{definition}
For $x,y \in \R$ we use the notation $x =_p y$ to say that $x$ and $y$ have the same representation
as floating-point numbers with precision $p \in \N^*$. 
\end{definition}

\begin{figure}[H]
\begin{center}
\setlength{\unitlength}{1cm}
\includegraphics[width=12cm]{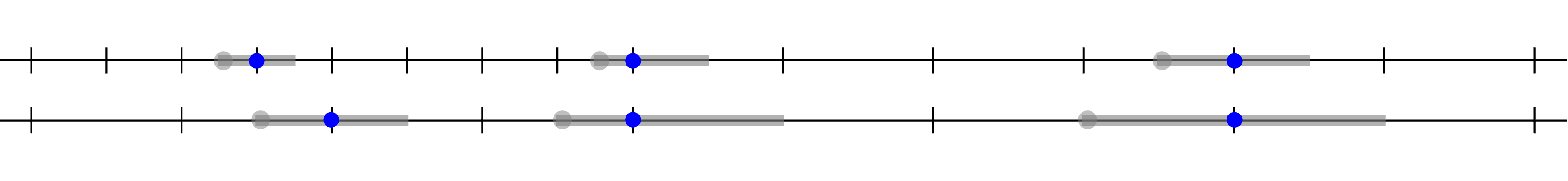}
\begin{picture}(0, 0)(12.1,0)
	\put(4.1,1.1){\footnotesize $a$}
	\put(4.76,1.1){\footnotesize $b$}
	\put(5.9,1.1){\footnotesize $c$}
	\put(12.25,0.84){\footnotesize $=_p$}
	\put(12.25,0.39){\footnotesize $=_{p-1}$}
\end{picture}
\captionsetup{width=.9\linewidth}
\caption{\label{fig:ulp}%
On the first line, three examples of equivalence classes for~$=_p$ are grayed out.
On the second line, examples of classes for~$=_{p-1}$ illustrate how the grid gets thinned out when
one bit of precision is dropped. Note in particular how the equivalence class of odd numbers (left) gets split.
}\end{center}
\end{figure}

\noindent
It is an equivalence relation; the equivalence class of
a floating-point number $\xi>0$ is
\begin{equation}\label{eq:class_modp}
\{y\in\R \,;\, y=_p \xi\} = \left[ \xi-\frac{1}{2}\ulp(\xi-\ulp(\xi)); \xi+\frac{1}{2}\ulp(\xi) \right).
\end{equation}
In general, the equivalence class~\eqref{eq:class_modp} of a floating-point $\xi>0$ is the interval
\[\textstyle
\left[\xi-\frac{1}{2}\ulp(\xi), \xi+\frac{1}{2}\ulp(\xi) \right).
\]
An exception occurs at scale turnover where the class is asymmetric. For example,
the consecutive numbers $a=2^n(1-2^{-p-1})=2^n\times 0.111\ldots11$,
$b=2^n=2^{n+1}\times 0.100\ldots00$ and $c=2^n(1+2^{-p})=2^{n+1}\times 0.100\ldots01$ satisfy
$\ulp(b)=c-b$ and $\ulp(a)=b-a=\frac{1}{2}\ulp(b)$
thus the equivalence class of $b$ is $[b-\frac{1}{4}\ulp(b),b+\frac{1}{2}\ulp(b))$.
Note that $b-\ulp(b)=2^n(1-2^{-p})$ and $\ulp(b-\ulp(b))=\ulp(a)$.
See Figure~\ref{fig:ulp}.
When $\xi<0$, the usual convention \textit{rounding away from zero at the tie}
implies that the interval~\eqref{eq:class_modp}  is flipped over and the $\pm$ signs must be reversed.

\bigskip
We say that equivalence classes for $=_p$ are \textit{adjacent} if their respective
closures in~$\R$ have a non-empty intersection. The corresponding floating-point numbers
are called adjacent too.
\begin{definition}
For $x,y \in \R$, we denote by $x \simeq_p y$ if the floating-point representations of~$x$ and~$y$ with $p$~bits
are identical or adjacent.
\end{definition}
\noindent
Though not transitive because of the obvious overlap between the sets of real numbers that are adjacent
to consecutive floating-point numbers, this relation is symmetric and simplifies the handling of scale turnover.
For example, one can always find real points that are arbitrarily close to one another
but whose floating-point representations are distinct; these points are however adjacent. See Figure~\ref{fig:ulp2}.

\begin{figure}[H]
\begin{center}
\setlength{\unitlength}{1cm}
\includegraphics[width=12cm]{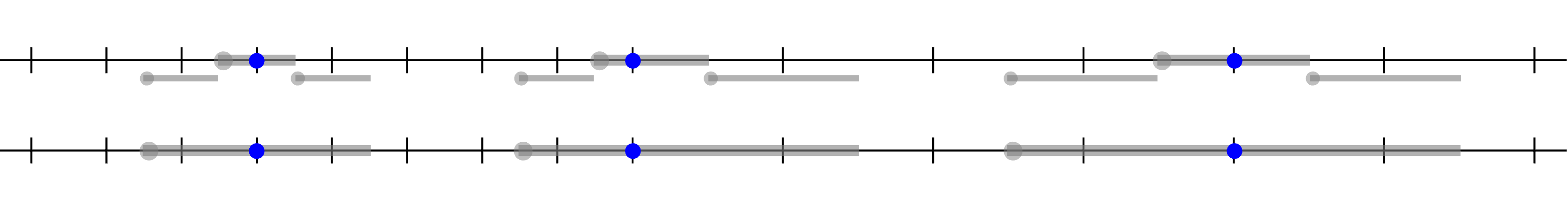}
\begin{picture}(0, 0)(12.1,0)
	\put(4.1,1.31){\footnotesize $a$}
	\put(4.76,1.31){\footnotesize $b$}
	\put(5.9,1.31){\footnotesize $c$}
	\put(12.25,1.065){\footnotesize $=_p$}
	\put(12.25,0.39){\footnotesize $\simeq_p$}
\end{picture}
\captionsetup{width=.9\linewidth}
\caption{\label{fig:ulp2}%
On the first line, examples of triplets of adjacent $=_p $ classes.
On the second line, examples of numbers $x$ such that $x\simeq_p \xi$ with respect to the marked (blue) floating-point $\xi$.
The relation $\simeq_p $ is not transitive: $a\simeq_p b\simeq_p c$ but $a\not\simeq_p c$.
}\end{center}
\end{figure}

\medskip
When $x$ is a real number and $\xi$  is its floating $p$-bit representation, it
satisfies $|x-\xi|\leq \frac{1}{2}\ulp(\xi)$ and $s(x)\leq s(\xi) \leq s(x)+1$.
Moreover, $s(x)\neq s(\xi)$ occurs only at scale turn-over, when $2^n(1-2^{-p-2})\leq x<\xi=2^n$ for some~$n\in\Z$.
Conversely, if $\xi$ is a floating-point number such that $|x-\xi|\leq \frac{1}{4}\ulp(\xi)$ then $\xi$ is the $p$-bit representation of $x$.

\medskip
Along the real line, the characterization of adjacency in terms of scale is the following.
\begin{lemma}\label{lemma:eqp}
Let $x$, $y\in\R$ and $p\in\N^\ast$. One has
\begin{equation}\label{equ:eqp3} 
s(x - y) \leq \max\{s(x),s(y)\} - p - 2 \quad\Longrightarrow\quad 
x \simeq_p y
\end{equation}
and, conversely,
\begin{equation}\label{equ:eqp3_converse}
x \simeq_p y
\quad\Longrightarrow\quad
s(x - y) \leq \max\{s(x),s(y)\} - p \,.
\end{equation}
If $x$ and $y$ are $p$-bit floating-point numbers, then 
\begin{equation}\label{equ:eqp_float}
s(x - y) \leq \max\{s(x),s(y)\} - p - 1 \quad\Longrightarrow\quad 
x \simeq_p y \,.
\end{equation}
\end{lemma}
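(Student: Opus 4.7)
The plan is to translate the three implications into geometric statements about the floating-point representations $\xi$ of $x$ and $\eta$ of $y$, exploiting the description of equivalence classes in~\eqref{eq:class_modp} and the class-width computations performed just above. Set $\overline{M}:=\max\{s(x),s(y)\}$. First I would dispose of the easy cases: if $x=y$ everything is immediate, and if $x,y$ have opposite signs then $|x-y|\geq\max\{|x|,|y|\}\geq 2^{\overline{M}-1}$, so $s(x-y)\geq\overline{M}$, which makes the hypotheses of~\eqref{equ:eqp3} and~\eqref{equ:eqp_float} vacuous; in~\eqref{equ:eqp3_converse}, $x\simeq_p y$ with opposite signs forces both classes to meet near~$0$ (in practice, $\xi=\eta=0$), a degenerate subcase that follows from the conventions on~$s(0)$. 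By the symmetry $(x,y)\mapsto(-x,-y)$, which preserves $s$, $|x-y|$ and~$\simeq_p$, I may then assume $0\leq x\leq y$; in particular $s(y)=\overline{M}$ and $s(\eta)\in\{\overline{M},\overline{M}+1\}$, the latter value occurring only when $\eta=2^{\overline{M}}$.

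For~\eqref{equ:eqp3} I argue by contraposition. Assume $x\not\simeq_p y$: then $\xi\neq\eta$ and the classes $I_\xi,I_\eta$ are non-adjacent, so there exists a float $\zeta$ with $\xi<\zeta<\eta$, and $I_\zeta$ lies strictly between $I_\xi$ and~$I_\eta$; consequently $y-x$ exceeds the width of~$I_\zeta$. I take $\zeta$ to be the immediate predecessor of~$\eta$. An inspection of float spacings at and just below scale~$\overline{M}$ shows $s(\zeta)\geq\overline{M}-1$, so the width of~$I_\zeta$ is at least $2^{\overline{M}-p-2}$. Hence $|x-y|>2^{\overline{M}-p-2}$, equivalently $s(x-y)\geq\overline{M}-p-1$, contradicting the hypothesis $s(x-y)\leq\overline{M}-p-2$.

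For~\eqref{equ:eqp3_converse}, $x\simeq_p y$ places both $x$ and $y$ inside the single real interval $I_\xi\cup I_\eta$. A case split (same class; adjacent at a common scale; adjacent across a scale boundary) combined with the constraint $\max\{s(x),s(y)\}=\overline{M}$, which excludes the portion of the union lying above~$2^{\overline{M}}$, yields diameter strictly less than~$2^{\overline{M}-p}$ in each case, hence $s(x-y)\leq\overline{M}-p$. Finally, \eqref{equ:eqp_float} is the specialization $\xi=x$, $\eta=y$ of the argument for~\eqref{equ:eqp3}: by contraposition, two non-adjacent floats with $\max\{s(\xi),s(\eta)\}=\overline{M}$ satisfy $|\xi-\eta|\geq 2^{\overline{M}-p-1}$, either because two consecutive float gaps each contribute at least $2^{\overline{M}-p-2}$ (when $s(\xi)\geq\overline{M}-1$), or because $|\xi-\eta|\geq 2^{\overline{M}-1}-2^{\overline{M}-2}=2^{\overline{M}-2}$ dominates $2^{\overline{M}-p-1}$ for $p\geq 1$ (when $s(\xi)$ is strictly smaller).

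The main obstacle is the careful bookkeeping around scale turnover: at $b=2^n$ one has $\ulp(b)=2\,\ulp(b-\ulp(b))$ and the class of~$b$ has width $\frac{3}{4}\ulp(b)$ rather than~$\ulp(b)$. These asymmetries are precisely what fix the three constants $-p-2$, $-p-1$ and $-p$ in the statements, and handling them correctly is the only non-routine step. Once the intermediate float~$\zeta$ is chosen optimally as the predecessor of~$\eta$, the rest of the verification reduces to integer comparisons between scales, using the estimates on~$s$ collected in the previous lemma.
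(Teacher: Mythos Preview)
Your proof is correct and follows essentially the same strategy as the paper: contraposition for~\eqref{equ:eqp3} and~\eqref{equ:eqp_float} via an intermediate floating-point class sandwiched between $x$ and $y$, and a diameter bound on the union of two adjacent classes for~\eqref{equ:eqp3_converse}. The only differences are organizational: you normalize to $0\le x\le y$ upfront and pick the intermediate float as the predecessor of $\eta$ (giving $s(\zeta)\ge\overline{M}-1$), whereas the paper picks the \emph{largest} separating float (giving $s(\xi)=\overline{M}$ directly, modulo the single exception $\xi=2^n(1-2^{-p-1})$); for the converse, the paper avoids your three-way case split by the one-line estimate $|x-y|<\ulp(\tilde x)+\ulp(\tilde y)\le 2\max\{\ulp(\tilde x),\ulp(\tilde y)\}$ and then handles the scale-turnover adjustment $s(\tilde y)=\overline{M}+1$ separately.
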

\proof
If $x$ and $y$ are not adjacent real numbers, then $x$ and $y$ are separated by a whole $p$-bit equivalence class of some
floating-point number $\xi$ (see Figure~\ref{fig:ulp2}), \ie
\begin{equation}\label{eq:not_eqp}
|x-y|> \begin{cases}
\frac{3}{4}\ulp(\xi) & \text{if }\xi=2^n, \quad n\in\Z\\
\ulp(\xi) &\text{otherwise}.
\end{cases}
\end{equation}
In general, choosing the largest $\xi$ possible ensures $s(\xi)=\max\{s(x),s(y)\}$ and
\[
2^{s(x-y)} > |x-y|>\frac{1}{2}\ulp(\xi) = 2^{s(\xi)-p-2} = 2^{\max\{s(x),s(y)\}-p-2} \,.
\]
The only exception is $s(\xi)=\max\{s(x),s(y)\}-1$ when $\xi=2^{n}(1-2^{-p-1})$;
in that case
\[
2^{s(x-y)} > |x-y|>\ulp(\xi) = 2^{s(\xi)-p-1} = 2^{\max\{s(x),s(y)\}-p-2} \,.
\]
In both cases, we have~\eqref{equ:eqp3}.
If  $x$ and $y$ are non-adjacent $p$-bit floating-point numbers, then~\eqref{eq:not_eqp} can be improved by a factor 2,
hence~\eqref{equ:eqp_float}.

\medskip
Conversely, if $x \simeq_p y$ and $\tilde{x}$, $\tilde{y}$ are respectively the $p$-bit representations of $x$ and~$y$,
then $|x-y|<\ulp(\tilde{x})+\ulp(\tilde{y})\leq 2\max\{\ulp(\tilde{x}),\ulp(\tilde{y})\}=2^{\max\{s(\tilde{x}),s(\tilde{y})\}-p}$
by~\eqref{def:ulp}, thus, according to~\eqref{eq:sigma} :
\[
s(x-y) \leq \max\{s(\tilde{x}),s(\tilde{y})\}-p \,.
\]
If $\max\{s(\tilde{x}),s(\tilde{y})\}= \max\{s(x),s(y)\}$, which is the case in general, then~\eqref{equ:eqp3_converse} holds.
At scale turnover, one may also have $\max\{s(\tilde{x}),s(\tilde{y})\} = \max\{s(x),s(y)\}+1$; then $x,y<\max\{\tilde{x},\tilde{y}\}=2^n$ for
some $n\in\Z$. In this case, the previous estimate improves to $|x-y|<\max\{\ulp(\tilde{x}),\ulp(\tilde{y})\}$,
which ensures~\eqref{equ:eqp3_converse}.
\qed

\bigskip
For complex numbers, the situation is more complicated because of phase shifts. 
In the real case, the only phase shift possible is a sign change, which does not affect precision;
in particular, there are only $2^{p+1}$ finite precision numbers for a given scale.
The direct extension of the adjacency relation $z\simeq_p z$ as the conjunction of $\Re z\simeq_p \Re z'$ and $\Im z\simeq_p \Im z'$
can lead to an extreme scale imbalance between the real and imaginary parts,
which is not compatible with phase shifts (\ie complex rotations).
Complex numbers whose argument is close to~$k \pi/2$ (with $k\in\Z$, \ie near the axes) have one component artificially over-resolved
compared to the other one; consequently, there are infinitely many$^\ast$\footnote{$^\ast$
In practice, the number of finite precision complex numbers
at a given scale is limited by the extreme negative exponent value authorized in the implementation.}
finite precision numbers with a given scale  (see Figure~\ref{fig:fpComplex}).

A similar instance of the same issue occurs if one choses two complex numbers close to the diagonal whose real and imaginary parts
are $p$-bit adjacent, \eg:
\[
(1+i)(1+ i 2^{-p-r})= 1-2^{-p-r}+i (1+2^{-p-r}) \simeq_p 1+i
\]
for any $r\geq1$ (recall that $\ulp(1)=2^{-p}$ so $1\pm 2^{-p-r} \simeq_p 1$).
Once we rotate them to bring them along the real axis (or, equivalently, if we multiply by $(1+i)^{-1}=\frac{1}{2}(1-i)$, which is an exact 0-bit number),
their imaginary parts will not be adjacent anymore and will
instead be separated by infinitely many scales: $1+i2^{-p-r} \not\simeq_p 1$.

\bigskip
To address this issue, we introduce a looser version of~$\simeq_p$, which is based on the scale
and inspired by the property~\eqref{equ:eqp3} above.
 \begin{definition}
For a pair of complex numbers $z,z'$, we note $z \approx_p z'$ if and only if
\begin{equation}\label{equ:eqpComplex}
s(z-z')\leq \max\{s(z),s(z')\} - p -2 \,.
\end{equation}
We say that $z$ and $z'$ have similar $p$-bit floating-point representations,
modulo approximate phase-shift invariance.
\end{definition}
Let us point out that, because $s$ is an increasing function, the following criterion holds~:
\begin{equation}\label{eq:eqp_crit}
|z-z'| \leq 2^{-p-2}|z| \quad\Longrightarrow\quad z\approx_p z' \,.
\end{equation}
Conversely, according to~\eqref{eq:sigma}, $ z\approx_p z'$ implies  $|z-z'| < 2^{-p-1}\max\{|z|,|z'|\}$.

\begin{figure}[H]
\begin{center}
\setlength{\unitlength}{1cm}
\includegraphics[width=.45\textwidth]{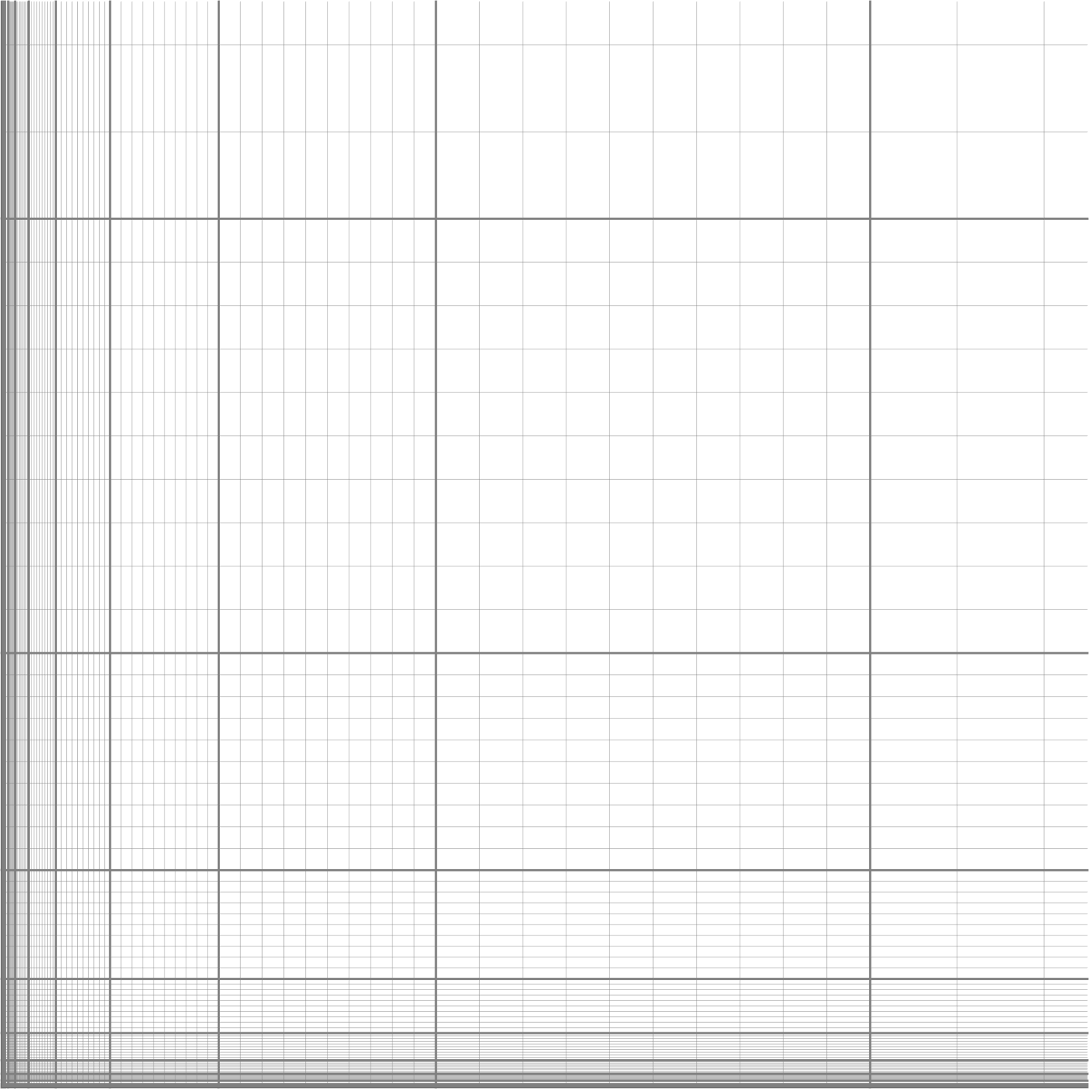}\qquad\quad
\includegraphics[width=.45\textwidth]{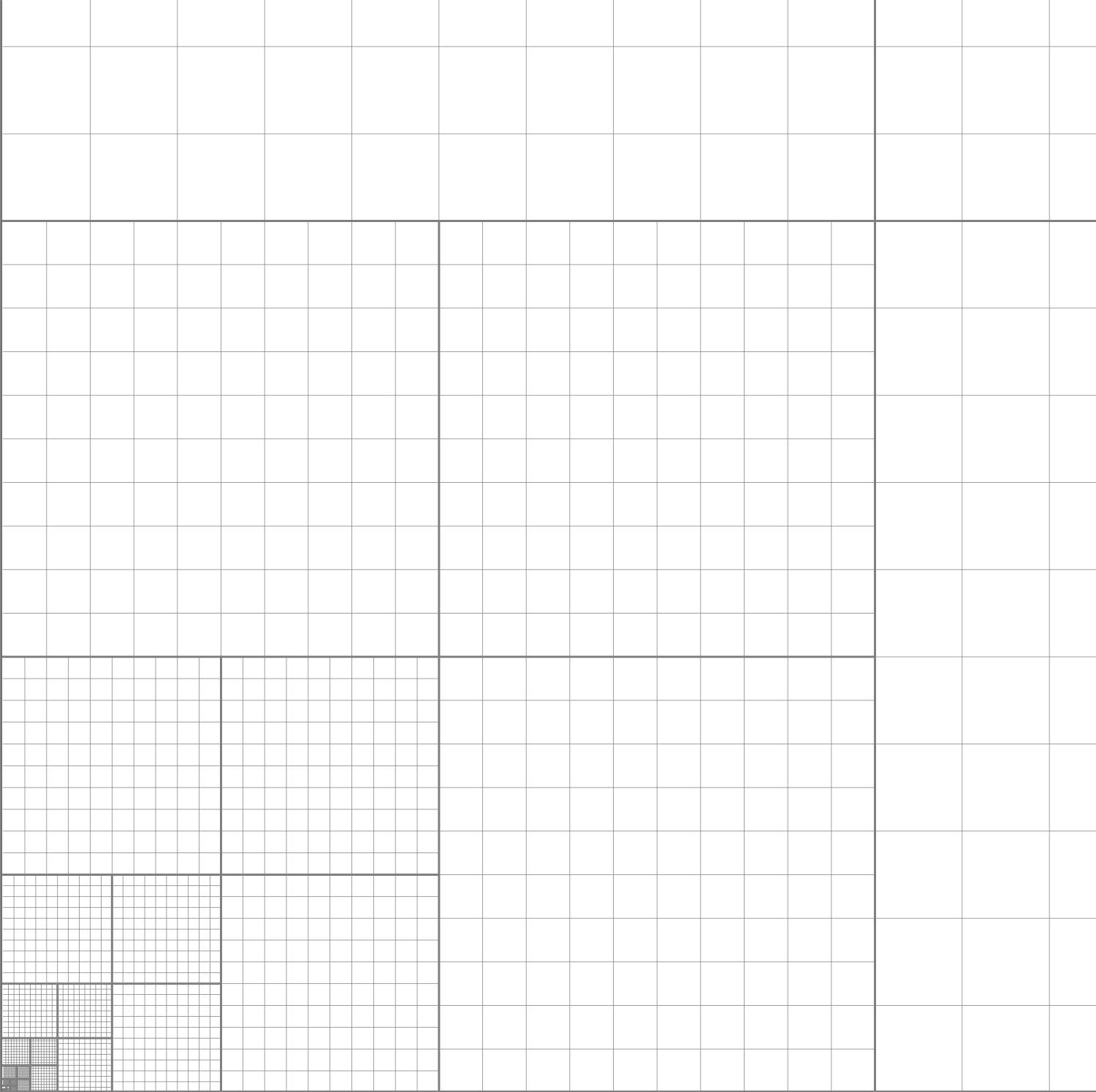}
\captionsetup{width=.95\linewidth}
\caption{\label{fig:fpComplex}%
The left grid represents the coordinates of floating-point complex numbers in the first quadrant. 
On this grid, direct neighbors correspond to a simultaneous adjacency~$\simeq_p$ of both the real and the imaginary parts.
The right grid illustrates the courser mesh associated with the~$\approx_p$ relation (similarity moduluo phase-shift), where the maximum of the scales of
the real and imaginary parts dictates the overall precision.}
\end{center}
\end{figure}

Thanks to Lemma~\ref{lemma:eqp}, the relation $z\approx_p z'$ implies adjacency when~$z,z'\in\R$.
If~$\zeta$ is a finite precision number close to the $x$-axis, then
\[
\{z \text{ floating-point such that } z\approx_p \zeta\}
\subset\{ x+iy \,;\, x\simeq_{p} \Re \zeta \text{ and }y\simeq_{p-\delta} \Im \zeta \}
\]
with $\delta=s(\Re \zeta)-s(\Im \zeta)\in\N$.
Indeed, denoting $\zeta=\xi+i\eta$ and $z=x+iy$,~\eqref{equ:sz} ensures in this case $s(\zeta)\leq s(\xi)+1$ and
$s(x-\xi)\leq s(z-\zeta) \leq \max\{s(z),s(\zeta)\} - p -2 \leq \max\{s(x),s(\xi)\}- p-1$ allows us to invoke~\eqref{equ:eqp_float}.
Similarly, $s(y-\eta)\leq s(z-\zeta) \leq \max\{s(x),s(\xi)\}- p-1\leq\max\{s(y),s(\eta)\}+\delta- p-1$.
This configuration is illustrated in Figure~\ref{fig:fpComplex}.

\begin{remark}
To build a finite arithmetic theory that is truly rotation invariant, one should use ball arithmetic.
Its superiority is demonstrated in~\cite{MV21}, to provide estimates that remain significant after many
iterations of a conformal map.
\end{remark}

\begin{remark}[\bf on cancelations]\label{rmk:cancel}
If $x,y$ are two $p$-bit floating-point numbers such that $y=_q -x$ for some $1\leq q<p$,
then~$|s(x)-s(y)|\leq 1$ and $|x+y|< 2^{\max\{s(x),s(y)\}-q-1}$, \ie
\begin{equation}\label{eq:scale_cancel}
\max\{s(x),s(y)\} - s(x+y) \geq q+1 \,.
\end{equation}
Conversely, if~\eqref{eq:scale_cancel} holds, then
\[
|x+y| <2^{-q} \times 2^{\max\{s(x),s(y)\}-1} \leq 2^{-q} \max\{|x|,|y|\} \,.
\]
In particular, if~$y =_p -x$, all bits cancel out and~$s(x+y)=-\infty$.
The inequality~\eqref{eq:scale_cancel} expresses that at least $q+1$ leading bits, including the implicit leading $\xi_0=1$ in~\eqref{equ:prec},
cancel each other in the addition of~$x$ and~$y$.
It is therefore possible to estimate the loss of precision by comparing the scales of the operands with that of the result: when
\[
0\leq \max\{s(x),s(y)\} - s(x+y)<\infty \,,
\] this value is the exact number of leading bits lost in the operation.
\end{remark}

\section{A bit of geometry}
\label{sect:analyse}

In this brief section we introduce some geometric tools that will be useful in the proof of Theorem~\ref{thm:algo}.
For any real valued map $g$ defined on $[0,1]$, let us define the horizontal strip of height $\delta > 0$
under the graph of $g$ as follows:
\begin{equation}\label{equ:sfd}
S(g, \delta)\defequal  \SetDef{(x,y) \in [0,1] \times \R}{g(x) - \delta \leq y \leq g(x)} .
\end{equation}
The set $S(g, \delta)$ is the intersection of the \textit{subgraph} of $g$ with the \textit{epigraph} of $g-\delta$.

\begin{figure}[H]
\captionsetup{width=.85\linewidth}
\begin{center}
\resizebox{.77\linewidth}{!}{%
\setlength{\unitlength}{1cm}
\includegraphics[width=10cm]{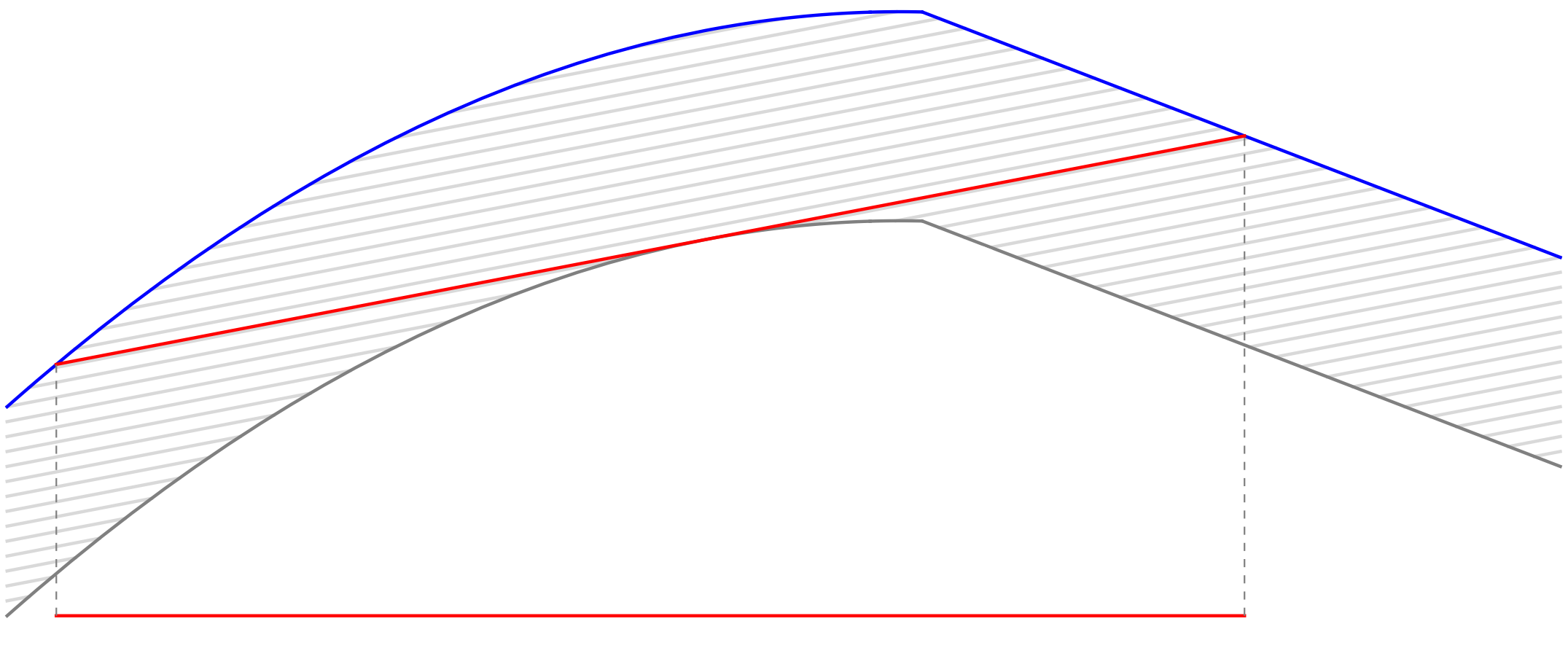}
\begin{picture}(0, 0)(10.1,0)
		\put(2.2, 3.4){\color{blue} $f$} 
		\put(8.5, 2.0){\color{gray} $S(f,\delta)$}
		\put(0,1.9){\color{red} $A$}
		\put(8,3.3){\color{red} $B$}
 		\put(3.3,2.5){\rotatebox{10.5}{\color{red} $L(f,\delta,\theta )$}}
		\put(0.0,0){\color{red}\footnotesize $A'$}
		\put(7.9,0){\color{red}\footnotesize $B'$}
		\put(3.3,0.3){\color{red}\footnotesize $L(f,\delta,\theta )\cos\theta $}
\end{picture}
}
\caption{\label{fig:strip}%
A concave function $f$, the strip $S(f,\delta)$,
the segment $[AB]$ of slope $\tan \theta $ and maximal length $L(f,\delta,\theta )$
and its horizontal projection~$[A'B']$, which is of length $L(f,\delta,\theta )\cos\theta $.
}
\end{center}
\end{figure}

\medskip
Let us denote by $\F$ the set of concave functions on $[0,1]$ \ie functions whose subgraph is a convex set.
For  all $f \in \F$ and $x,y,\lambda \in [0,1]$, one has
\[
f\left( (1 - \lambda) x + \lambda y \right) \geq (1 - \lambda) f(x)+ \lambda f(y) \,.
\]
For $f \in \F$, $\delta > 0$ and $\theta  \in (-\pi/2, \pi/2)$, we denote by $L(f, \delta, \theta )$
the maximal length of a segment of slope $\tan \theta $ contained in the strip $S(f, \delta)$, \ie
\begin{equation}\label{def_L}
L(f, \delta, \theta ) = \sup\left\{ |AB| \,;\, 
[AB]\subset  S(f,\delta) \enspace\text{and}\enspace \text{slope}(AB)=\tan\theta 
\right\},
\end{equation}
where the segment $[AB]=\{ (1-\lambda)A + \lambda B \,;\, \lambda\in[0,1] \}$.
These definitions are illustrated in Figure~\ref{fig:strip}.
One has $L(f,\delta,\theta )\leq \frac{1}{\cos\theta }$ because $|A'B'| = L(f,\delta,\theta )\cos\theta \leq 1$.

\medskip
The two following statements are key for estimating the complexity of the algorithm that
is presented in Section~\ref{sect:algo}.
\begin{thm}\label{thm:conc}
For all $f \in \F$ and all $\delta \in (0,1)$, one has
\begin{equation}\label{main_estim}
\frac{1}{\pi}\int_{ - \frac \pi 2}^{\frac \pi 2} L(f, \delta, \theta ) \cos \theta  \, d\theta
< 1.8644 \sqrt{\delta} \,.
\end{equation}
Moreover, there exists a function $f_0\in \F$ that satisfies a lower bound $>1.1128\sqrt{\delta}$
for $\delta$ small enough.
In the same conditions, one has
\begin{equation}\label{main_estim_bis}
0.91531\sqrt{\delta} < 
\sup_{f\in \F}\left(
\frac{1}{\pi}\int_{ - \frac \pi 2}^{\frac \pi 2} L(f, \delta, \theta ) \cos^2 \theta  \, d\theta 
\right)
<1.3505\sqrt{\delta} \,.
\end{equation}
\end{thm}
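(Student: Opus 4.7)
The proof naturally splits into an upper bound via a shear reduction and Cauchy--Schwarz, and matching lower bounds via an explicit extremal profile. My first move would be an area-preserving shear: under $(x,y)\mapsto(x,y-mx)$ with $m=\tan\theta$, concavity and the strip $S(f,\delta)$ are preserved while segments of slope $m$ become horizontal with lengths rescaled by $\cos\theta$. Writing $\Lambda(g,\delta)$ for the maximal horizontal chord length in $S(g,\delta)$, one therefore has $L(f,\delta,\theta)\cos\theta=\Lambda(f-x\tan\theta,\delta)$, and the substitution $m=\tan\theta$ reduces~\eqref{main_estim} and~\eqref{main_estim_bis} to weighted integrals of $\Lambda(f-xm,\delta)$ against $dm/(1+m^2)$ and $dm/(1+m^2)^{3/2}$. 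For concave $g$ on $[0,1]$ with maximum $c^*$ at $x^*$, the horizontal slice $\{c\le g\le c+\delta\}$ has at most two connected components, and their lengths are monotone in $c$ (the inverses of the monotone parts of $g$ are convex), so
\[
\Lambda(g,\delta)=h_g(c^*-\delta)=\bigl|\{x\in[0,1]\colon g(x)\ge c^*-\delta\}\bigr|,
\]
i.e., the horizontal extent where $f$ stays within vertical distance $\delta$ below its supporting line of slope $m$.

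For the upper bound, assuming $f$ smooth enough that $m=f'(x)$ parametrizes the tangency bijectively, I would change variables via $|dm|=|f''(x)|\,dx$, using the pointwise bound $\Lambda(f-xm,\delta)\le\sqrt{8\delta/|f''(x(m))|}$ obtained from the convexity of the deficit $c^*-g_m$. Cauchy--Schwarz after the further substitution $u=f'(x)$ gives
\[
\int_{\R}\frac{du}{(1+u^2)\sqrt{|f''(x(u))|}}\le \Bigl(\int_{\R}\frac{du}{(1+u^2)^2}\Bigr)^{\!1/2}\Bigl(\int_0^1 dx\Bigr)^{\!1/2}=\sqrt{\pi/2},
\]
which yields an asymptotic constant $2\sqrt{2/\pi}\approx 1.128$. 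The non-asymptotic refinement to $1.8644\sqrt\delta$ requires careful treatment of the boundary case where the supporting line touches $f$ at $x\in\{0,1\}$ (so that the level-set width may extend to the endpoint of $[0,1]$ and is not controlled by $|f''|$ alone) and of the situation where $\delta$ is comparable to the range of $f$. The $\cos^2\theta$ version carries an extra factor $1/\sqrt{1+m^2}$ through the same calculation and tightens the constant.

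For the lower bounds I would test against the symmetric semicircular arc $f_0(x)=\sqrt{\tfrac14-(x-\tfrac12)^2}$. Parametrizing $x-\tfrac12=\tfrac12\sin t$ gives $|f_0''|=2/\cos^3 t$ and $u=-\tan t$, so the integrand reduces to $\cos^{3/2}t/\sqrt 2$; its integral over $(-\pi/2,\pi/2)$ equals $B(5/4,1/2)/\sqrt 2$ and leads asymptotically to $I(\delta)\approx 1.113\sqrt\delta$, confirming the stated $1.1128\sqrt\delta$, while the analogous computation with an extra $\cos t$ produces the $0.91531\sqrt\delta$ bound for~\eqref{main_estim_bis}. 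The main obstacle throughout is the non-asymptotic promotion of the pointwise quadratic estimate on $\Lambda(g,\delta)$ into a sharp global inequality for arbitrary concave $g$, especially when the tangency is at $\partial[0,1]$ or $g$ is nearly flat; the cleanest route likely replaces the Taylor-based comparison by an integrated estimate extracting the $\sqrt\delta$ scaling directly from the area $\delta$ of the strip.
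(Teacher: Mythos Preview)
Your shear reduction $L(f,\delta,\theta)\cos\theta=\Lambda(f-m\,\mathrm{Id},\delta)$ with $m=\tan\theta$ matches the paper exactly, as does the choice of the half-circle $f_0(x)=\sqrt{x(1-x)}$ for the lower bounds (the paper's Example~2 recovers the same constants $>1.1128$ and $>0.91531$). The gap is precisely where you locate it: the upper bound. The pointwise estimate $\Lambda(g,\delta)\le\sqrt{8\delta/|g''(x^*)|}$ is only a leading-order Taylor approximation for a fixed smooth $g$ as $\delta\to0$; it fails whenever $|g''|$ is not nearly constant across the $\delta$-sublevel set (take $g$ with a sharp peak sitting on a wide plateau), and it is vacuous for piecewise-linear $f$, which are dense in~$\F$. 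Your Cauchy--Schwarz step therefore does not produce a bound uniform over $f\in\F$, and the suspiciously tight constant $2/\sqrt\pi\approx1.128$---barely above the known lower bound---is an artifact of having used an inequality that is really an asymptotic equivalence.

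The paper avoids pointwise curvature altogether. It passes to the $(x,y)$-plane with $y$ the slope variable and considers the completed graph $\gamma_{f'}$ of $f'$. The endpoints $x_L(y),x_R(y)$ are characterized by the condition that the region trapped between $\gamma_{f'}$, the horizontal at height $y$, and the vertical at $x_L$ (resp.\ $x_R$) has area exactly $\delta$; this is the integrated form of the tangency condition. The key geometric lemma is then: any square whose opposite corners lie on the curves $\gamma_L=\{(x_L(y),y)\}$ and $\gamma_R=\{(x_R(y),y)\}$ is contained in the union of two such area-$\delta$ triangles, hence has area $\le2\delta$ and diagonal $\le2\sqrt\delta$. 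After a rotation by $\pi/4$, the double integral $\iint_\Omega(1+y^2)^{-1}\,dx\,dy$ over the strip $\Omega$ between $\gamma_L$ and $\gamma_R$ becomes an integral over a domain whose every vertical slice has length $\le2\sqrt\delta$; elementary bounds on the rotated weight then give $(1+\pi)\sqrt{2\delta}$, i.e.\ the constant $(1+\pi)\sqrt2/\pi\approx1.8644$. This argument needs no smoothness and no second derivative---it is exactly the ``integrated estimate extracting the $\sqrt\delta$ scaling directly from the area $\delta$'' that you anticipated in your final sentence but did not find.
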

\noindent%
We use the following variant for computing averages on the Riemann sphere.
\begin{thm}\label{thm:conc2}
For any positive even weight $\omega \in L^\infty\left(-\frac{\pi}{2},\frac{\pi}{2}\right)$ decreasing on $[0,\pi/2)$
and such that $\omega\left(\frac{\pi}{2}-t\right)\leq C \left|\ln t\right|^{-\beta}$ with $\beta>1$ as $t\to0^+$, one has
\begin{equation}\label{main_estim_ter}
\int_{ - \frac \pi 2}^{\frac \pi 2} L(f, \delta, \theta ) \, \omega(\theta)  \, d\theta 
\leq C_\omega  \sqrt{\delta}
\end{equation}
with
\begin{equation}\label{main_estim_ter_techsupport}
C_\omega = 
\sqrt{2} \|\omega\|_{L^\infty\left(-\frac{\pi}{4},\frac{\pi}{4}\right)} + 4 \int_{\frac{\sqrt{2}}{4}}^\infty 
\frac{\omega\left(\arctan(\frac{1}{2}-\sqrt{2}x)\right)}{\sqrt{1+2\left(x-\frac{\sqrt 2}{4} \right)^2}} dx<\infty \,.
\end{equation}
\end{thm}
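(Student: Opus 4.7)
The strategy is to split the integration interval at $|\theta|=\pi/4$ and handle the two regions by distinct arguments. On the central region $(-\pi/4,\pi/4)$ I would invoke Theorem~\ref{thm:conc} to reduce the weight $\omega$ to $\cos\theta$; on the edge region $\{\pi/4<|\theta|<\pi/2\}$ I would adapt the geometric argument underlying Theorem~\ref{thm:conc} and use the change of variable already built into the formula for $C_\omega$.

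First, I would verify that $C_\omega$ is finite and identify the geometric content of its second summand. The substitution $x = \frac{1}{2\sqrt{2}} - \frac{\tan\theta}{\sqrt{2}}$ yields $1+2\bigl(x-\tfrac{\sqrt{2}}{4}\bigr)^2 = \sec^2\theta$ and, using the evenness of $\omega$, transforms the integral defining the second summand into $2\sqrt{2}\int_{0}^{\pi/2}\omega(\theta)/\cos\theta\,d\theta$. The hypothesis $\omega(\pi/2-t) = O(|\ln t|^{-\beta})$ with $\beta>1$ ensures that $\omega(\theta)/\cos\theta \sim |\ln t|^{-\beta}/t$ is integrable near $\pi/2$, so $C_\omega<\infty$.

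Second, on $I_c=(-\pi/4,\pi/4)$ one has $\omega \leq \|\omega\|_{L^\infty(I_c)}$ and $\cos\theta\geq \sqrt{2}/2$, hence
\[
\int_{I_c} L(f,\delta,\theta)\,\omega(\theta)\,d\theta \leq \sqrt{2}\,\|\omega\|_{L^\infty(I_c)}\int_{-\pi/2}^{\pi/2} L(f,\delta,\theta)\cos\theta\,d\theta,
\]
which is $O\bigl(\|\omega\|_{L^\infty(I_c)}\sqrt{\delta}\bigr)$ by Theorem~\ref{thm:conc} and matches the first summand of $C_\omega\sqrt{\delta}$. Third, on $I_e=\{\pi/4<|\theta|<\pi/2\}$ the trivial pointwise bound $L\leq 1/\cos\theta$ would give only a $\delta$-independent estimate, so I would adapt the proof of Theorem~\ref{thm:conc}: for $|\theta|>\pi/4$ the chord of slope $\tan\theta$ is closer to vertical than horizontal, and I would parametrize it by its vertical extent $L|\sin\theta|$ and exploit the fact that $S(f,\delta)$ has vertical thickness~$\delta$ together with the concavity of $f\in\F$. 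The target is a bound of the form $\int_{I_e} L(f,\delta,\theta)\,\omega(\theta)\,d\theta \lesssim \sqrt{\delta}\int_{I_e}\omega(\theta)/\cos\theta\,d\theta$, which after the change of variable above recovers the second summand of $C_\omega\sqrt{\delta}$.

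The main obstacle is precisely this edge estimate. A pointwise bound $L\lesssim \sqrt{\delta}/\cos\theta$ cannot hold uniformly on $I_e$ — it fails for the linear $f$ whose slope equals $\tan\theta$ — but the $\theta$-measure of the set where $L$ is close to the trivial bound $1/\cos\theta$ is $O(\delta\cos^2\theta)$, while on its complement $L$ decays like $\delta/|\cos\theta(\tan\theta-f'(x^\star))|$ by concavity. Quantifying this competition between pointwise size and measure uniformly over all $f\in\F$, while producing the sharp constants packaged into $C_\omega$, is the technical heart of the argument; I expect it to proceed by a convex-geometric argument in a frame rotated by $\theta$, in which the strip $S(f,\delta)$ acquires perpendicular width $\delta\cos\theta$ and the chord becomes horizontal, so that the analysis mirrors that of Theorem~\ref{thm:conc}.
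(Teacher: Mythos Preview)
Your decomposition is in the variable $\theta$, whereas the paper's is in a different variable obtained after a $45^\circ$ rotation of the plane. This is not a cosmetic difference: the paper does \emph{not} treat Theorem~\ref{thm:conc2} by modifying the argument for Theorem~\ref{thm:conc}, but proves both through one common machinery. One writes
\[
\int_{-\pi/2}^{\pi/2} L(f,\delta,\theta)\,\omega(\theta)\,d\theta
=\iint_{\Omega}\frac{\omega(\arctan y)}{\sqrt{1+y^2}}\,dx\,dy,
\]
where $\Omega\subset[0,1]\times\R$ is the strip between two curves $\gamma_L,\gamma_R$ (the locus of endpoints of the maximal chords), then rotates by $\pi/4$ to a domain $\Omega'$ in coordinates $(x_1,x_2)$ with $y=(x_2-x_1)/\sqrt 2$. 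The key geometric lemma---proved via an area argument on triangles under the graph of $f'$---is that every \emph{vertical} section $\{x_1=\text{const}\}\cap\Omega'$ has length at most $2\sqrt\delta$. One then splits on $|x_1|\lessgtr \sqrt2/4$: on $|x_1|<\sqrt2/4$ the integrand is bounded by $\|\omega\|_{L^\infty(-\pi/4,\pi/4)}$ because $|y|$ may be small, and integrating gives exactly $\sqrt2\,\|\omega\|_{L^\infty(-\pi/4,\pi/4)}\sqrt\delta$; on $|x_1|\geq\sqrt2/4$ the constraint $\Omega\subset[0,1]\times\R$ forces $|x_2-x_1|\geq 2\bigl||x_1|-\sqrt2/4\bigr|$, and the monotonicity of $\omega$ then bounds the integrand by a function of $x_1$ alone, producing precisely the second summand of $C_\omega$.

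Your proposal misses this structure and has two concrete gaps. First, on $|\theta|<\pi/4$ you invoke Theorem~\ref{thm:conc} as a black box via $1\leq\sqrt2\cos\theta$; this yields $\sqrt2\,\|\omega\|_{L^\infty}\cdot(1+\pi)\sqrt{2\delta}$, which overshoots the first summand of $C_\omega$ by the factor $(1+\pi)\sqrt2$ and does not ``match'' it. Second, on $|\theta|>\pi/4$ you correctly note that no pointwise bound $L\lesssim\sqrt\delta/\cos\theta$ can hold, but you then leave the argument at the level of an expectation (``rotate by $\theta$ and mirror Theorem~\ref{thm:conc}''). Rotating by each $\theta$ separately gives you no uniform handle across $\theta$; what actually controls the integral is a \emph{single} rotation by $\pi/4$ in the $(x,\tan\theta)$-plane together with the $2\sqrt\delta$ section bound, and this is precisely the idea you are missing. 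Moreover, your stated target $\int_{I_e}L\omega\lesssim\sqrt\delta\int_{I_e}\omega/\cos\theta$ does not reproduce the second summand of $C_\omega$, which (as your own substitution shows) equals $2\sqrt2\int_0^{\pi/2}\omega/\cos\theta$ and involves the \emph{full} range $(0,\pi/2)$, not just $I_e$.
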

\noindent{Note that, in this second statement, any normalization factor is included within $\omega$.}

\medskip
Subsequently, we will apply these results to a function $f$ that is a renormalized concave cover of
the scales of the coefficients and a gap $\delta$ that depends on the precision of the computations
(see equation~\eqref{eq:def_f_from_Ep} and Figure~\ref{fig:avgcoscos2}).
We postpone the proof of Theorems~\ref{thm:conc} and~\ref{thm:conc2}
to Appendix~\ref{par:geom_proof}.

\section{The \A{} algorithm}
\label{sect:algo}

We now focus on a new \emph{Fast Polynomial Evaluator} algorithm, or~\A{} for short.

\subsection{Key idea: lazy polynomial evaluation}\label{par:lazy}

Consider a polynomial $P \in \C[X]$, which we identify to the entire function
\[
P(z) = a_0 + a_1 z + \ldots + a_d z^d \,,
\]
where $a_i \in \C$, $i \in \ii{0}{d}$ and $a_d \neq 0$. The degree of $P$ is $\deg(P)\defequal d$. 

\bigskip
The general idea of the \A{} algorithm is to perform as many lazy additions~\eqref{equ:sum} as possible.
We cut down the cost by not computing the monomials that will have no influence on the final result.
More precisely, with \textit{minimal overhead} (preprocessing), we identify the favorable cases where
it will be safe to perform lazy additions (this is the non-trivial and novel point, as the value $P(z)$ is not yet known), and in the remaining cases, we apply a variant of H\"orner's method.
All unnecessary monomials are thus left out.

\begin{example}
The simplest case study of the \A{} algorithm is the following.
Let $P(z)=1+z$ and assume we perform computations with precision $p$.
If $s(z)<-p - 1$ then $P(z) \simeq_p 1$ and  if $s(z)>p + 1$ then $P(z) \simeq_p z$.
So in these two cases, we get the result for free. An actual computation is only required
in the remaining case,~\ie when $|s(z)|\leq p+1$. If $z$ is uniformly distributed on the
Riemann sphere $\CC$, then on average  (see Section~\ref{sect:eval}), the result is computed in
\[
\int_{2^{-p-1}}^{2^{p+1}} \frac{2rdr}{(1+r^2)^2}
= \tanh\left((p+1)\ln 2\right) 
= 1 - 2^{-1-2p} + O(2^{-3-4p})
\]
operations instead of $1$. In this simplest case, the gain is negligible.
\end{example}

It turns out that the lazy evaluation method performs steadily better in terms of
arithmetic complexity (and speed) as the degree of~$P$ gets higher.
Compared to H\"orner's scheme, the gain is substantial (\eg $O(\sqrt{d\log d})$ instead of $O(d)$ for computations
in machine precision) and holds for \emph{every} polynomial
once we average out over all the possible scales of the evaluation point.
Before presenting the general case, we illustrate our algorithm and this phenomenon
below, on a polynomial of degree~$10$ (see Figure~\ref{fig:concave}).

Let us point out that the~\A{} algorithm thrives in the case of \textit{multiple evaluations}, because
the initial analysis needs not be repeated. We illustrate this fact on the example below too,
by showing how  the analysis at the points $z=\pm1$ can easily be transposed to an arbitrary value of~$z$
(see Section~\ref{par:ex2general}).
By construction, subsequent evaluations cannot exceed the
complexity of H\"orner's scheme and will, on average, be much better.

\smallskip
Contrary to the FFT or the Fast Multipoint Algorithm, the~\A{} algorithm is \textit{local} in the sense that
the evaluation at a given point is independent of the precise computations that are needed
to evaluate $P$ at another point. One can thus expect the algorithm to have the same numerical
stability as H\"orner's method.
The memory requirements for each evaluation are also minimal because the overhead storage is
negligible in comparison to that of the coefficients.
For example, our algorithm will thrive in implementations of Newton's method to find a single
root of a polynomial of large degree because the list of evaluation points is, obviously, not known in advance.

\smallskip
Finally, without increasing the arithmetic complexity,
it is possible to complement the result $P(z)$ with a \textit{confidence estimator}
that indicates how many of the $p$ bits may have suffered from cancelations
(for details, see Remarks~\ref{rmk:cancel} and~\ref{rmk:HornerAndBitLoss}).
This feature is part of our implementation \cite{FPELib} (see Section~\ref{sect:code}, in particular Figure~\ref{fig:estimPrecSphere}).
It may help if finding the proper value of $p$ is part of the problem,
\eg in the implementation of Newton's method with a dynamically adjusted precision.
Note that changing the value of $p$ will require a new preprocessing
of the polynomial (see Remark~\ref{rem:partialpreproc}).

\newcommand{\ep}{\wideparen{E}_P}
\medskip
In preparation for the general case, let us introduce the following notation.
\begin{definition}
The scales of the coefficients are modeled by the function $E_P:\ii{0}{d} \rightarrow \Z \cup \{-\infty\}$ defined by
\begin{equation}\label{def:EP}
\forall i \in \ii{0}{d}, \quad  E_P(i)\defequal s(a_i) \,.
\end{equation}
We will denote by $\ep$ the concave cover of~$E_P$, that is the minimal real concave
function on $[0,d]$ such that $E_P \leq \ep$.
Obviously, $\ep$ is piecewise linear.
\end{definition}

\subsection{A simple example detailed}

We analyze an example depicted in Figure \ref{fig:concave}, which represents $E_P$
and $\ep$ for a particular polynomial $P$ of degree $10$ with non-zero coefficients.
The scales of the coefficients are readable on the graphic; the actual values of the
phase of each coefficient are irrelevant to the discussion.

\[
\begin{array}{|c|ccccccccccc|}
\hline
\text{Coefficient} & a_0 & a_1 & a_{2} &  a_{3} &  a_{4} &  a_{5} &  a_{6} &  a_{7} &  a_{8} &  a_{9} &  a_{10}
\\\hline
|a_k|\rule{0pt}{13pt} & 2^{-3} & 2^{5} & 2^{-4} & 2^{15} & 2^{13} & 2^{-5} & 2^{26} & 2^{15} & 2^{29} & 2^{29} & 2^{17}
\\\hline
s(a_k) & -2 & 6 & -3 & 16 & 14 & -4 & 27 & 16 & 30 & 30  & 18
\\\hline
\end{array}
\]

\medskip
To keep this example simple, we compute $P(x)$ with a fixed precision $p=6$. 
The strip~$S(\ep,p)$ defined by~\eqref{equ:sfd} is a polygonal band
of vertical thickness $p$. For $\lambda=\tan\theta $, 
let  us also denote by $L_{-\lambda}$ the longest segment of slope $\lambda$ contained in~$S(\ep ,p)$,
that is
\begin{equation}
L_{-\lambda} = L(\ep,p,\theta )
\end{equation}
with $L$ defined by~\eqref{def_L}; see~Section~\ref{sect:analyse} if necessary.
The reason for the sign convention will appear in Section~\ref{par:ex2general}.

\definecolor{DarkGray}{RGB}{80, 80, 80}
\begin{figure}[p]
\begin{center}
\resizebox{0.853\linewidth}{!}{%
\setlength{\unitlength}{1cm}
\includegraphics[width=14cm]{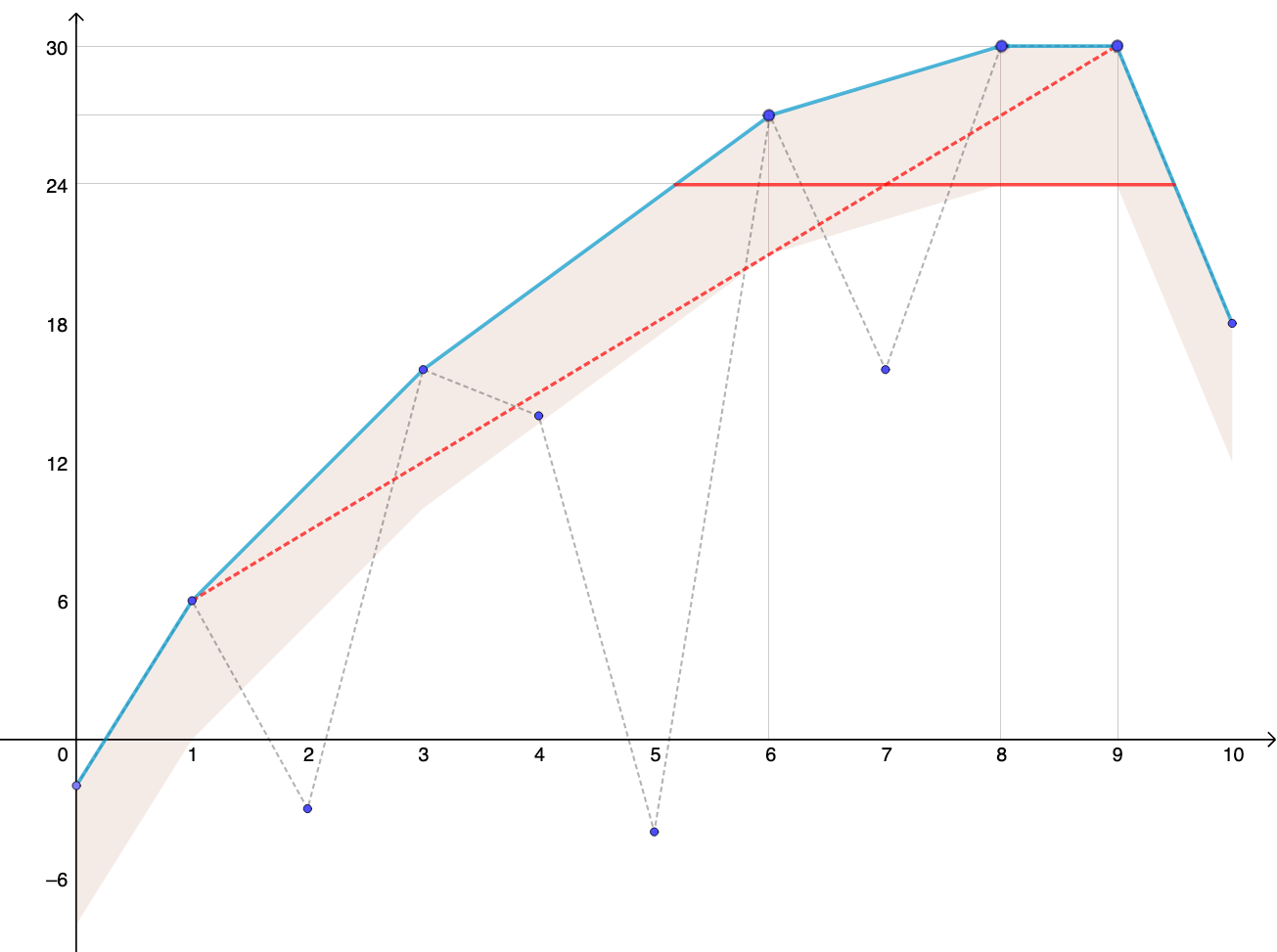}
\begin{picture}(0, 0)(14.1,0)
		\put(3,8.43){\color{Maroon} \vector(0,1){1.5}}
		\put(3.2,9.1){\color{Maroon} $p$}
		\put(7, 4.8){\color{DarkGray} $E_P$}
		\put(5.5, 7.8){\color{Cyan} $\ep$}
		\put(10.47,10.2){\color{blue} {\footnotesize $(k_0,s_0)$}}
		\put(12.7, 9.1){\color{Maroon} $S \big( \ep , p \big)$}
		\put(11.4, 8.58){\color{Red} $L_0$}
		\put(6.7, 6.9){\rotatebox{31}{\color{Red} $L_{-3}$}}
		\put(13.2, 2.6){$k$}
		\put(1.0, 10.2){$s(a_k)$}
\end{picture}
}
\caption{\label{fig:concave}
Example illustrating $E_P$, $ \ep$, $S \big( \ep ,p \big)$ for $p=6$, $L_0$ and $L_{-3}$.
The horizontal segment $L_0$ isolates the (three) coefficients of $Q_0(z)$, which is the suitable
reduction of $P(z)$ when the evaluation occurs on the unit circle.}
\end{center}
\end{figure}

\begin{figure}[p]
\setlength{\unitlength}{1cm}
\begin{center}
\resizebox{0.82\linewidth}{!}{%
\includegraphics[width=14cm]{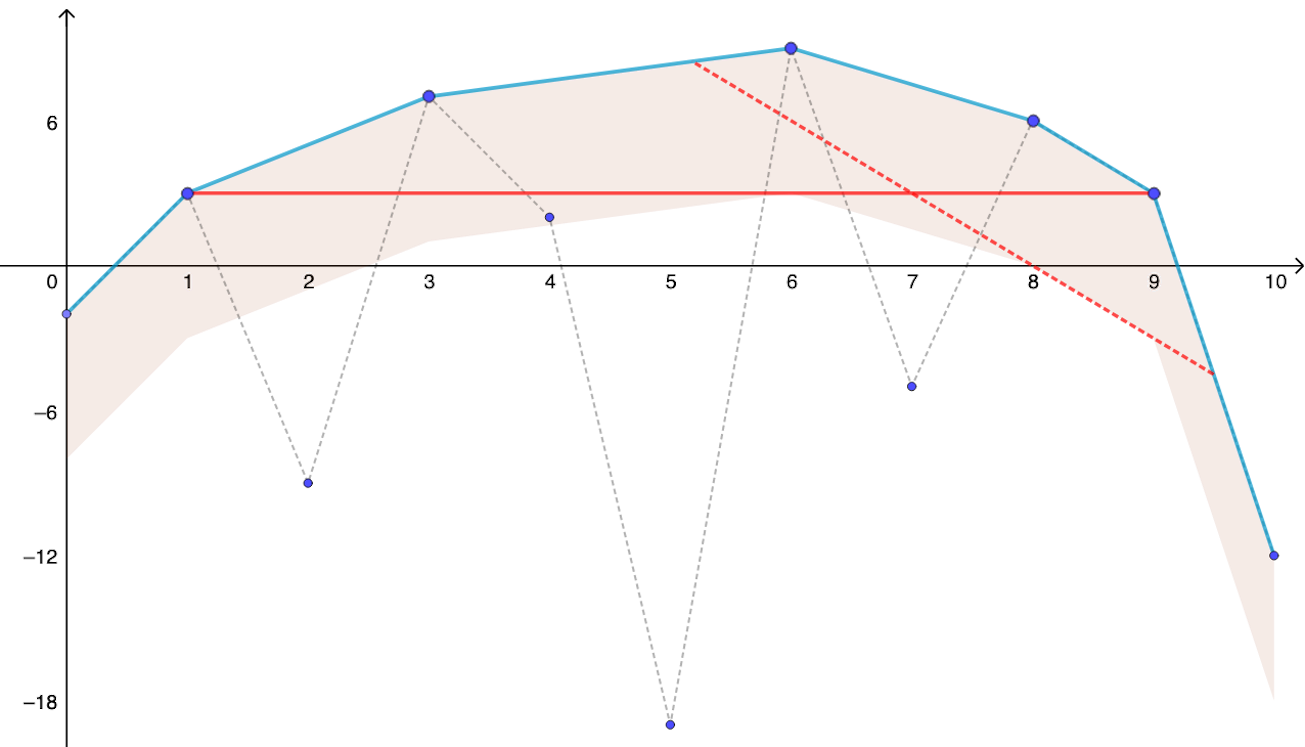}
\begin{picture}(0, 0)(14.1,0)
		\put(5.3, 7.7){\color{Cyan} $ \ep + \lambda \mathrm{Id}$}
		\put(11.3, 4.6){\rotatebox{-31}{\color{Red} $A_\lambda \, L_0$}}
		\put(6.3, 6.1){\color{Red} $A_\lambda\, L_{-3}$}
		\put(8.7, 7.6){\color{blue} {\footnotesize $(k_\lambda,s_\lambda+\lambda k_\lambda)$}}
		\put(13.4, 5.3){$k$}
		\put(1.0, 7.6){$s(a_k) + k\lambda$}
\end{picture}
}
\caption{\label{fig:shear}
Transformation of Figure \ref{fig:concave} by the affine map $A_\lambda$ (defined below) for $\lambda=-3$.
The image of the concave cover $\ep$ is $\ep + \lambda \mathrm{Id}$.
The horizontal segment $A_\lambda L_\lambda$ isolates the (five) coefficients of $Q_\lambda(z)$,
which is the reduction of $P(z)$ when $|z|= 2^\lambda$.}
\end{center}
\end{figure}

\subsubsection{Evaluation on the unit circle}

For the previous example,
let us first consider the case where the evaluation point~$z$ satisfies $|z|=1$.
For $k \in \ii{0}{10}$, we have $s(a_k z^k) = s(a_k)$.
To select the largest monomials in $P(z)$, let us consider the segment $L_0$ on Figure~\ref{fig:concave},
which is the horizontal segment situated at a scale $p$ below the scale of the monomial with maximal scale.
The monomials we keep are the ones above $L_0$.  In this example, we get
\[
Q_0(z) \defequal  a_6z^6  + a_8z^8 + a_9 z^9 \,.
\]
Let us now check that the floating-point value of~$P(z)$ coincides indeed with $Q_0(z)$.
If $k \notin \{6,8,9\}$, then $s(a_k z^k) \leq 18$ and the inequality \eqref{equ:bs} with $N=8$ implies
\[
s(P(z) - Q_0(z)) \leq 18+4 = 22
\quad\ie\quad |P(z) - Q_0(z)|<2^{22} \,.
\]
On the other hand, unless there is an exceptional cancelation of the leading term while computing $Q_0(z)$,
one has $s(Q_0(z)) \geq 30$.
In particular, one has $s(P(z)-Q_0(z))\leq 22\leq\max\{s(P(z)),s(Q_0(z))\}-p-2$
and \eqref{equ:eqpComplex} ensures that
\[
P(z) \approx_p  Q_0(z) \,,
\]
as long as the leading terms of $Q_0(z)$ do not cancel each other.

\bigskip
Let us now investigate the possible cancelations within $Q_0$.
As computations are restricted to $p=6$ bits, the last bit of both $a_8z^8$ and $a_9z^9$
represents a rounding interval of the real line of radius~$\frac{1}{2}\times 2^{30-6-1} = 2^{22}$.
This is the best error bound for $a_8z^8$ and $a_9z^9$ that we can hope for.
Thus, even if a cancelation of the most significant bits occurs,
the error when computing $a_8z^8 + a_9z^9$ cannot be bounded to less than~$2^{23}$.
Consequently, as the bound for the error can only increase with more terms in the sum, the uncertainty on
the value of $Q_0(z)$ will  exceed $2^{23}$. 
On the other hand, we have checked that $|P(z) - Q_0(z)|<2^{22}$,
so this difference is always smaller than the error bound in the computation of both~$Q_0(z)$ and~$P(z)$.
Therefore, one can claim that, when $|z|=1$,  the value~$Q_0(z)$
is always a good floating-point substitute for $P(z)$, \ie within the error bounds for 6 bits of precision throughout the
computation, even if significant bits are canceled out.

\medskip
To get a geometric feeling in this instance of our algorithm,
observe that in the computation of $Q_0(z)$ we have only used terms that are
above the line $L_0$, which is  is the longest horizontal segment contained in $S(\ep, p)$.
This line lies at the scale level $\max_{k}s(a_k z^k) - p$.
Note also that we did not use a precision larger than $p$ for any intermediary result.

\def\buff{3}  
\def\buffmo{2} 
\def\buffpo{4} 
\subsubsection{Towards the general case}\label{par:ex2general}

Let us continue the analysis of the previous example for a general evaluation point,
\ie when~$z \in \C$.
As $P(0)=a_0$ is immediately available, let us assume $z\neq0$ and consider
\begin{equation}\label{eq:defLambda}
\lambda \defequal  \log_2 |z| \,.
\end{equation}
The product formulas \eqref{equ:sp} and \eqref{equ:sn} imply that, for all $k \in \ii{0}{d}$, we have 
\begin{equation}
\label{equ:shear}
| s(a_k) + k \lambda - s(a_k z^k) | \leq 1\,.
\end{equation}
As before, we seek a simpler polynomial $Q_\lambda$ such that it is sufficient to
evaluate~$Q_\lambda$ at~$z$ instead of~$P$ when the computations
are done with $p=6$ bits.

In order to visualize the polynomial $Q_\lambda$, let us consider the image of Figure~\ref{fig:concave} by
the following affine map of $\R^2$
\[
A_\lambda: \begin{pmatrix}x\\y
\end{pmatrix}
 \mapsto
\begin{pmatrix}
  1 & 0 \\
  \lambda & 1
\end{pmatrix}
\begin{pmatrix}x\\y
\end{pmatrix}.
\]
By definition, $A_\lambda$ maps lines of slope $-\lambda$ to the horizontal.
For $\lambda=-3$, we consider the reduced polynomial
\[ 
Q_{-3}(z) \defequal   a_1z + a_3z^3 + a_6z^6  + a_8z^8 + a_9 z^9\,.
\]
In other words, we select the powers $k \in \ii{0}{d}$ such that $s(a_k) + k \lambda$ is above the horizontal line $A_{\lambda}\, L_{\lambda}$.

\[
\begin{array}{|c|ccccccccccc|}
\hline
\text{Coefficient} & a_0 & a_1 & a_{2} &  a_{3} &  a_{4} &  a_{5} &  a_{6} &  a_{7} &  a_{8} &  a_{9} &  a_{10}
\\\hline
|a_k|\rule{0pt}{13pt} & 2^{-3} & 2^{5} & 2^{-4} & 2^{15} & 2^{13} & 2^{-5} & 2^{26} & 2^{15} & 2^{29} & 2^{29} & 2^{17}
\\\hline
s(a_k z^k) & -2 & 3 & -9 & 7 & 2 & -19 & 9  & -5 & 6 & 3  & -12
\\\hline
\end{array}
\]

Figure~\ref{fig:shear} and the table above indicate that $s(Q_{-3}(z))=9$ so $\ulp(Q_{-3}(z))=2^2$.
Consequently,~\eqref{equ:bs} ensures that $s(P(z)-Q_{-3}(z))\leq 2+3=5$ \ie $|P(z)-Q_{-3}(z)|<2^5$. In particular, one has
\[
P(z) \approx_{p-4} Q_{-3}(z)\,.
\]
The precision loss (4 bits out of 6) may seem significant in this example. However, in general,
the loss is capped by $s(d) + \buff$, which means that an offset on the thickness of the $S(\ep,\cdot)$
strip will be enough to deal with the general case.

\begin{remark}
In the general statement of the~\A{} algorithm (see~\eqref{eq:threshold} in~Section~\ref{par:algo}),
we will use the scale threshold $\max_{k}s(a_kz^k) - p - s(d) - \buff$
instead of $\max_{k}s(a_kz^k) - p$ (used in Figures \ref{fig:concave} and \ref{fig:shear})
to prevent interaction between $Q_\lambda$ and $P-Q_\lambda$ and to secure upper bounds of $s(a_kz^k)$.
\end{remark}

We may now link the statement of Theorems~\ref{thm:conc} and~\ref{thm:conc2} to our algorithm.
As the sum of two concave maps is concave, the map $\kappa \mapsto \ep(\kappa) + \lambda \kappa$ is concave.
Therefore, there are at most $|A_{\lambda}\, L_{\lambda}| + 1$ terms in the reduced polynomial $Q_{\lambda}$,
where $|L|$ is the length of a segment~$L$. Observe that
\begin{equation}
\label{equ:cos}
| A_{\lambda}\, L_{\lambda} | = |L_\lambda| \cos \theta\,,
\end{equation}
where  $\theta  \in \big( -\pi/2, \pi/2 \big)$ satisfies $\lambda = \tan \theta $.
To estimate the average reduction in complexity of our algorithm over H\"orner's scheme, 
we are interested in averaging the number of monomials of $P(z)$ that are ultimately evaluated.
We will therefore compute the average value of $L(f,\delta,\theta )\, \cos\theta $
where $f \in \F$  is a renormalized version of~$\ep$, defined for $x \in [0,1]$ by
\begin{equation}\label{eq:def_f_from_Ep}
f(x) = \frac{\ep\left( d \, x \right)}{d} \quad\text{and}\quad
\delta \defequal  \frac{p +s(d) + \buff}{d}
\,\cdotp
\end{equation}
Depending on how the values $z$ are chosen in $\C$, various weights for $\theta  \in \left( -\pi/2, \pi/2 \right)$
are used (see~Section~\ref{sect:eval}).

\subsection{Statement of the algorithm}
\label{par:algo}

We are given a precision $p \geq 1$ and a polynomial expression
\[
P(z)=\sum_{j=0}^d a_j z^j
\]
in $\C[X]$ with $d = \deg P \geq 1$.
We will also assume that $a_0 \neq 0$, otherwise we reduce the problem to a lower
degree polynomial $z^{-k}P(z)$ for some~$k \geq 1$.

\medskip
We can formalize our evaluation algorithm $\A_p$ as follows. 
Each non-trivial operation has its time (bit) complexity marked as a comment on the right.
The Figures~\ref{fig:concave} and~\ref{fig:shear} illustrate the algorithm.

\bigskip
\SetKwFor{ForEach}{for~each}{do}{endfch}
\renewcommand{\algorithmcfname}{Algorithm $\A_p$}
\begin{algorithm}[H]
\caption{Fast evaluation of complex polynomials with precision $p$}
\label{algo:fast}
\DontPrintSemicolon
\SetKwRepeat{Do}{do}{while}
\smallskip
  \KwData{The list of \cod{coefficients} $a_0,\ldots,a_d$ and the precision $p \geq 1$}
\Begin(\textbf{preconditioning}){
  \cod{compute and sort $s(a_k)$, $k \in \ii{0}{d}$ \tcc*{$d\log_2 d$}}
  \cod{compute the concave map $\ep$ \tcc*{$d\log_2 d$}}
  \cod{list $G_p=\left\{k\in \ii{0}{d}\,;\,  s(a_k) \geq \ep(k) - p - s(d) - \buff\right\}$  \tcc*{$d$}}
}
\bigskip
\KwData{Pre-conditioned $P$ at precision $p$}
\KwData{Finite subset $Z$ of $\C^\ast$ (evaluation points)}
\Begin(\textbf{evaluation}){
 \ForEach{$z\in Z$}{
  \cod{let $\lambda \defequal  \log_2|z|$\tcc*{1}}
  \cod{compute $k_\lambda\defequal \argmax\big(\ep + \lambda \mathrm{Id}\big)$  \tcc*{$\log_2 d$}}
  \cod{let $N\defequal \ep(k_\lambda) + \lambda k_\lambda$\tcc*{1}}
  \cod{compute $\{\ell,r\}\defequal (\ep + \lambda \mathrm{Id}\big)^{-1}(N - p - s(d) - \buff)$ \tcc*{$2\log_2 d$}}
  \cod{compute and output $Q_\lambda(z) = \hspace*{-2ex}\sum\limits_{k\in G_p\cap\ii{\ell}{r} } \hspace*{-1.5ex}a_kz^k$
    \tcc*{avg $\lesssim 2.7 M(p) \sqrt{d p}$}}
 }
}
\end{algorithm}
\noindent
See Section~\ref{par:scale} for the definition of the scale functions $s$
and the end of Section~\ref{par:lazy} for that of  the concave cover~$\ep$
of~$s(a_j)$. 

\subsection{Statement of the main results}
\label{par:algoResults}

In the following subsections, we describe in detail each step of the algorithm.
Subsequently, we prove its correctness, \ie the statement of Theorem~\ref{thm:equivAlgo},
and we compute the time complexity of~$\A_p$ as stated in Theorem~\ref{thm:algo}
and equation~\eqref{eq:complexityAlgo}.

\begin{thm}\label{thm:equivAlgo}
Given $P$ as above and a precision $p\geq1$, for each $z\in\C^\ast$ and $\lambda=\log_2|z|$,
there exists a polynomial subexpression $Q_\lambda$
of $P$ such that (see Section~\ref{par:equivModP}):
\begin{equation}\label{eq:main}
P(z) \approx_{p-c} Q_\lambda(z) \,,
\end{equation}
where the number of canceled bits $c\geq0$ is defined by
\begin{equation}\label{eq:def_prec_loss}
c = \begin{cases}
0 & \text{if } |P(z)| \geq \mathcal{M},\\
s(\mathcal{M}) - s(P(z)) & \text{otherwise,}
\end{cases}
\end{equation}
and $\mathcal{M}\defequal  \max\limits_{j\in\ii{0}{d}}  |a_j z^j|$.
The reduced polynomial $Q_\lambda$ is given by
\begin{equation}\label{eq:reducedQlambda}
Q_\lambda(z) \defequal  \sum\limits_{k\in G_p\cap\ii{\ell}{r} } \hspace*{-1.5ex}\widetilde{a_k}z^k \,,
\end{equation}
where $\widetilde{a_k}$ is the $p$-bit floating-point representation of $a_k$
and where $\ell$, $r$ and $G_p$ are computed by the algorithm $\A_p$ described above.
The number of monomials of $Q_\lambda$ satisfies 
\begin{equation}
\avg_{\CC} \left(\#\ii{\ell}{r}\right) <  1+1.9046 \sqrt{d(p +s(d) + \buff)} \,,
\end{equation}
where the average is taken with respect to the uniform distribution of $z\in\CC$ on the Riemann sphere.
Moreover, for any $d$ and $p$, there exists a polynomial $P$ for which~$G_p = \ii{0}{d}$
and such that $\avg_{\CC} \left(\#\ii{\ell}{r}\right) > 1.3217 \sqrt{d(p +s(d) + \buff)}$ as $d\to\infty$.
\end{thm}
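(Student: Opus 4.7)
The plan is to split the statement into three parts — the error bound \eqref{eq:main}, the geometric interpretation of $\#\ii{\ell}{r}$, and the spherical averaging together with the matching lower bound — and handle each in turn.

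\textbf{Correctness of \eqref{eq:main}.} I decompose
\begin{equation*}
P(z) - Q_\lambda(z) = \sum_{k \notin G_p \cap \ii{\ell}{r}} a_k z^k + \sum_{k \in G_p \cap \ii{\ell}{r}} (a_k - \widetilde{a_k})\, z^k,
\end{equation*}
and bound the scale of each piece. For any $k$ outside $G_p \cap \ii{\ell}{r}$, either the $G_p$-condition fails ($s(a_k) < \ep(k) - p - s(d) - 3$) or the $\ii{\ell}{r}$-condition fails ($\ep(k) + \lambda k < N - p - s(d) - 3$); combined with $s(a_k) \leq \ep(k)$ and $\ep(k) + \lambda k \leq N$, both cases produce $s(a_k) + \lambda k \leq N - p - s(d) - 3$, which \eqref{equ:shear} upgrades to $s(a_k z^k) \leq N - p - s(d) - 2$. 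Summing at most $d+1$ such terms via \eqref{equ:bs} bounds the first sum by scale at most $N - p - 1$. The rounding contribution is controlled termwise by $|(a_k - \widetilde{a_k}) z^k| \lesssim 2^{-p}\mathcal{M}$, so its sum has scale at most $s(\mathcal{M}) - p + s(d) + O(1)$ — which is precisely what the extra $s(d)$ in the threshold is calibrated to absorb. Combining both with $s(\mathcal{M}) \in [N-1, N+1]$ (from \eqref{equ:shear} at $k_\lambda$) yields $s(P(z) - Q_\lambda(z)) \leq s(\mathcal{M}) - p - 2$. A case split on $|P(z)| \gtrless \mathcal{M}$ then matches the deficit to $c$: when $|P(z)| \geq \mathcal{M}$ one has $s(P(z)) \geq s(\mathcal{M})$ and $c = 0$; otherwise $c = s(\mathcal{M}) - s(P(z))$ is exactly compensated by the drop of $\max\{s(P(z)), s(Q_\lambda(z))\}$, which gives \eqref{eq:main} through \eqref{equ:eqpComplex}.

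\textbf{Count as a geometric length.} The affine shear $A_\lambda$ of Section~\ref{par:ex2general} turns the line of slope $-\lambda$ horizontal and converts $\ep$ into $\ep + \lambda \mathrm{Id}$; since $\ell$ and $r$ are the two preimages of level $N - p - s(d) - 3$, the width $r - \ell$ equals the horizontal projection of the longest segment of slope $-\lambda$ contained in the strip $S(\ep, p + s(d) + 3)$. With the renormalization $f(x) = \ep(dx)/d$, $\delta = (p + s(d) + 3)/d$ of \eqref{eq:def_f_from_Ep} and the substitution $\tan\theta = -\lambda$, this reads $r - \ell = d\, L(f, \delta, \theta) \cos\theta$, and hence $\#\ii{\ell}{r} \leq 1 + d\, L(f, \delta, \theta) \cos\theta$.

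\textbf{Spherical average.} The uniform spherical distribution on $\CC$, expressed in $\lambda = \log_2|z|$, has density $\rho(\lambda) = (2\ln 2)\, 4^\lambda (1+4^\lambda)^{-2}$; the change of variables $\lambda = -\tan\theta$ with $d\lambda = \sec^2\theta\,d\theta$ turns the spherical average into
\begin{equation*}
\avg_{\CC}[\#\ii{\ell}{r}] \leq 1 + d \int_{-\pi/2}^{\pi/2} L(f, \delta, \theta)\,\omega(\theta)\,d\theta, \qquad \omega(\theta) := \rho(\tan\theta)\sec\theta.
\end{equation*}
One verifies that $\omega$ is positive, even, monotone on $[0, \pi/2)$, and decays super-exponentially at $\theta \to \pm\pi/2$ (since $4^{\pm\tan\theta}(1+4^{\pm\tan\theta})^{-2}$ vanishes exponentially as $|\tan\theta| \to \infty$), which comfortably satisfies the hypotheses of Theorem~\ref{thm:conc2}. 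Applying it gives $\leq 1 + C_\omega \sqrt{d(p+s(d)+3)}$, and an explicit numerical evaluation of \eqref{main_estim_ter_techsupport} for this particular $\omega$ produces the stated constant $C_\omega < 1.9046$.

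\textbf{Lower bound and main obstacle.} For the matching existence claim, I take $P$ whose coefficient scales $s(a_k)$ follow a suitably discretized spherically-extremal concave profile (an analogue of the $f_0$ used for Theorem~\ref{thm:conc}), chosen so that $E_P$ itself is concave, whence $\ep = E_P$ and $G_p = \ii{0}{d}$. The cardinality then equals $d\, L(f, \delta, \theta)\cos\theta + O(1)$, and substituting into the spherical integral above — together with a lower-bound analogue of Theorem~\ref{thm:conc} specifically optimized against the weight $\omega$ — yields the asymptotic $1.3217\sqrt{d(p+s(d)+3)}$ as $d \to \infty$. The main obstacle I foresee is the correctness step: tracking simultaneously the omitted-monomial error, the input-rounding error, the $s(d)$ loss from summing $d+1$ terms, and the gap between $N$ and $s(\mathcal{M})$, so that the precise offsets $+3$ and $s(d)$ in the algorithm's threshold produce exactly the deficit $c$ of \eqref{eq:def_prec_loss} without leaking a spurious $\log_2 d$ factor into the precision loss.
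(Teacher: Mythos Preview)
Your proposal follows the paper's approach in all essential respects: the error bound via the scale estimates for dropped monomials (Section~\ref{sect:err}), the identification $r-\ell \leq d\,L(f,\delta,\theta)\cos\theta$ followed by the spherical change of variables to the weight $\omega$ and the application of Theorem~\ref{thm:conc2} (Section~\ref{sect:eval}), and the half-circle polynomial for the lower bound (Section~\ref{par:optimal}, yielding $C_3(0)\simeq 1.32178$) are exactly what the paper does.

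The one substantive difference is your explicit decomposition into dropped monomials plus rounding error $\sum_{k\in G_p\cap\ii{\ell}{r}}(a_k-\widetilde{a_k})z^k$. The paper does not do this: its equation~\eqref{eq:target} and the estimates that follow use the \emph{exact} $a_k$, effectively proving the claim for $\sum_{k\in G_p\cap\ii{\ell}{r}} a_k z^k$ and deferring the rounding to the comparison with H\"orner in Remark~\ref{rmk:HornerAndBitLoss}. Your attempt to fold the rounding into the same bound contains a misattribution: the claim that ``the extra $s(d)$ in the threshold is calibrated to absorb'' the rounding sum is not correct. That $s(d)$ buffer is consumed, via~\eqref{equ:bs}, by the sum of up to $d$ \emph{dropped} monomials; the rounding sum over the kept terms is a separate contribution of size up to $\#(G_p\cap\ii{\ell}{r})\cdot 2^{-p}\mathcal{M}$, which in the worst case is $d\cdot 2^{-p}\mathcal{M}$ --- i.e.\ the ordinary H\"orner rounding loss, not something the threshold controls. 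The paper's resolution is simply to state (Remark~\ref{rmk:HornerAndBitLoss}) that the result is as accurate as H\"orner with $p$-bit coefficients, rather than absorbing this into~$c$.
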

\begin{remark}\label{rmk:HornerAndBitLoss}
The proof of Theorem~\ref{thm:equivAlgo} ensures that
\begin{equation}
P(z) \approx_{p-c} \sum\limits_{k=0}^d \widetilde{a_k}z^k\,.
\end{equation}
The precision claimed by~\eqref{eq:main} is thus equivalent to that of H\"orner's scheme,
if all coefficients (limited to $p$ bits) had been kept.
Note also that, even though the number of cancelled bits $c$ is defined by~\eqref{eq:def_prec_loss}
and thus depends on the exact value of $P(z)$, it is possible to give a precise upper-bound of $c$ by
using Remark~\ref{rmk:cancel} for each addition that occurs in the computation of~\eqref{eq:reducedQlambda}.
\end{remark}

\medskip
Regarding complexity, the main result is as follows (see Figure~\ref{fig:HornerFPETheory}).

\begin{thm}\label{thm:algo}
Given a polynomial $P \in \C[X]$ of degree $d \geq 1$ and a bit precision $p \in \N^*$,
the preconditioning phase of algorithm $\A_p$ is performed on~$P$ in time~$d+2d\log_2 d$ and
requires $O(dp)$ in memory.
Subsequently, for $z$ uniformly distributed on the Riemann sphere~$\CC$,
the average evaluation time of $P(z)$ by the algorithm $\A_p$ is less than
\begin{equation}\label{eq:compAlgo}
2+3\log_2 d+\left(
2\log_2 d+ 1.9046 \sqrt{d(p+\log_2 d+\buffpo)}
\right)M(p) \,,
\end{equation}
where $M(p)$, recalled in~\eqref{Mp}, denotes
the time of one multiplication followed by an addition of two floating-point numbers with precision $p$.
If $P \in \R_d[X]$ and $x$ is uniformly distributed on the circle $\overline{\R}=\R\cup\{\infty\}$, the average evaluation time
of $P(x)$ by the algorithm $\A_p$ is less than
\begin{equation}\label{eq:compAlgoRe}
2+3\log_2 d+\left(
2\log_2 d+ 1.7673 \sqrt{d(p+\log_2 d+\buffpo)}
\right)M(p) \,.
\end{equation}
In both cases, the bit complexity of  the evaluator never exceeds $2+3\log_2 d + M(p)d$.
\end{thm}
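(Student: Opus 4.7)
The plan is to bound separately the preconditioning cost and the per-evaluation cost, then to average the latter over $z$ using the geometric estimates from Section~\ref{sect:analyse}. The preconditioning bound is routine: sorting the $d+1$ scales $s(a_k)$ takes at most $d\log_2 d$ integer comparisons; constructing the concave cover $\ep$ from the sorted data is a standard upper convex-hull sweep over $d+1$ points, also bounded by $d\log_2 d$; and extracting $G_p$ requires $d$ comparisons with the shifted hull. The memory footprint $O(dp)$ reflects storing the $d+1$ coefficients at precision $p$.

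I then analyze the per-point evaluation cost. The preliminary steps in algorithm $\A_p$ cost only $2+3\log_2 d$ scalar operations: computing $\lambda = \log_2|z|$ and $N$ are $O(1)$ each, while $k_\lambda$, $\ell$ and $r$ are obtained by binary searches on the piecewise-linear concave function $\ep + \lambda\,\mathrm{Id}$, each in $O(\log_2 d)$. The dominant cost comes from the reduced polynomial $Q_\lambda(z) = \sum_{k \in G_p \cap \ii{\ell}{r}} \widetilde{a_k}\, z^k$, which I intend to evaluate by factoring out $z^\ell$ (obtained by fast exponentiation in $O(\log_2 d)$ multiplications) and applying Horner's scheme to $w \mapsto \sum_{k \in G_p \cap \ii{\ell}{r}} \widetilde{a_k}\, w^{k-\ell}$ on the full integer interval $\ii{\ell}{r}$, with zeros inserted for missing indices; this uses at most $r - \ell$ multiply-accumulates. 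Together this produces the fixed overhead of roughly $2\log_2 d\cdot M(p)$ and isolates the main variable term $(r-\ell)M(p)$.

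For the averaging, the geometric identification~\eqref{equ:cos} together with the renormalization~\eqref{eq:def_f_from_Ep} gives $r - \ell \leq d\,L(f,\delta,\theta)\cos\theta$ with $\delta = (p+s(d)+\buffpo)/d$, where $\theta \in (-\pi/2,\pi/2)$ is defined by $\tan\theta = \log_2|z|$. For $z$ uniform on the Riemann sphere, the radial density of $|z|$ is $\frac{2r}{(1+r^2)^2}$; pushing it forward through $r \mapsto \log_2 r \mapsto \arctan(\log_2 r) = \theta$ and incorporating the extra factor $\cos\theta$, I obtain the weight $\omega(\theta) = \frac{2\ln 2\cdot 4^{\tan\theta}}{(1+4^{\tan\theta})^2\cos\theta}$. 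This $\omega$ is even, positive, bounded on compact subsets of $(-\pi/2,\pi/2)$, strictly decreasing on $[0,\pi/2)$, and decays essentially like $4^{-\tan\theta}/\cos\theta$ at the endpoints, which is much faster than any $|\ln t|^{-\beta}$. Theorem~\ref{thm:conc2} then yields $\avg_{\CC}(r - \ell) \leq C_\omega\sqrt{d(p+s(d)+\buffpo)}$; since $s(d) \leq \log_2 d + 1$, this gives the announced bound once one checks numerically that $C_\omega \leq 1.9046$. The real case is analogous, using the circular density $\frac{1}{\pi(1+x^2)}$ on $\R\cup\{\infty\}$ instead of the spherical one, and producing a different but similarly admissible weight whose constant evaluates to at most $1.7673$.

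The worst case bound $2+3\log_2 d + d\,M(p)$ is immediate since $r - \ell \leq d$ deterministically and $Q_\lambda$ has at most $d+1$ monomials. The hard part will be the explicit computation of the two constants $C_\omega$ via~\eqref{main_estim_ter_techsupport}: this requires careful tracking of the Jacobian of the composite change of variables $r \mapsto \lambda \mapsto \theta$, verification of the monotonicity and decay hypotheses of Theorem~\ref{thm:conc2} for each of the two weights, and numerical evaluation of the resulting explicit integral on $(\sqrt{2}/4,\infty)$ to obtain the claimed sharp constants.
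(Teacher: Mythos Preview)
Your proposal is correct and follows essentially the same route as the paper: the same decomposition into preconditioning (sort~$+$~concave hull~$+$~$G_p$ extraction), the same $2+3\log_2 d$ overhead from three binary searches, the same bound $r-\ell\leq d\,L(f,\delta,\theta)\cos\theta$ via the shear~\eqref{equ:cos} and renormalization~\eqref{eq:def_f_from_Ep}, the same pushforward of the spherical (resp.\ circular) measure through $|z|\mapsto\log_2|z|\mapsto\theta$ to obtain the weight $\omega$ (resp.\ $\widetilde\omega$), and the same appeal to Theorem~\ref{thm:conc2} for the constants. One minor slip: in your definition of $\delta$ you wrote $p+s(d)+\buffpo$, but the paper's $\delta$ in~\eqref{eq:def_f_from_Ep} uses $\buff=3$, and it is only after applying $s(d)\leq\log_2 d+1$ that the $+\buffpo$ appears inside the square root; as written, your chain would overshoot by one.
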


\begin{remark}\label{rmk:complexityPractice}
In practice, $p\geq 52$ because double precision \texttt{FP64}
is implemented in most modern hardware, \ie $M(52)=1$.
One has $d\ll 2^{48}$ (see \eg\cite{MV21} for a record-breaking handling of a tera-polynomial).
In this case, $p +\log_2 d + \buffpo \leq 2p$ and~\eqref{eq:compAlgo} is bounded by
\[
2+3 \log_2 d+ (96+2.7  \sqrt{dp}) M(p)\,,
\]
when $d\gtrsim 100$ and~\eqref{eq:compAlgoRe} by $2+3 \log_2 d+ (96+2.5  \sqrt{dp}) M(p)$ in the real case.
\end{remark}

\begin{remark}
Let us point out that the uniform average over $\CC$ (or $\overline{\R}$) is unfavorable to our algorithm. Near the poles $z=0$ and $z=\infty$,
our algorithm will drop most terms and will thus be very quick. However, a uniform average does not favor those regions: the
area of the region $|z|>10$ (or $|z|<1/10$) represents about $1\%$ of the area of the sphere,
which is of the same order of magnitude as that of the annulus $||z|-1|<10^{-2}$.
Using the techniques exposed in Section~\ref{sect:comp}, one can compute the average complexity for any
particular distribution of evaluation points; for example, the case of a uniform distribution on $D(0,1)$
is treated in Remark~\ref{rmk:case_disk} below, estimate~\eqref{eq:case_disk}.
It is also possible to refine the estimate if the distribution of the coefficients of~$P$ is known (see \eg[~]Figure~\ref{fig:chebyshev}
for Chebyshev polynomials).
\end{remark}

\bigskip
A point is worth underlining: if the algorithm $\A_p$ encounters one ``bad'' case where one evaluation has the same complexity as H\"orner,
then, on average, it will perform much better than~\eqref{eq:compAlgo}.
More precisely, let us assume that one particular choice of $z_0$ with $\log_2 |z_0|=\lambda_0$ leads our algorithm to evaluate all the
monomials of $P(z_0)$, which is the worst case possible. 
Of course, for such a polynomial, our algorithm would not outperform H\"orner
if we were to evaluate only on $z$ in an annulus $|z|\simeq \lambda_0$.
However, from this shortcoming, we learn that the graph of the concave cover of $s(a_k)+k\lambda_0$
rescaled to $[0,1]$ (see \eqref{eq:def_f_from_Ep} and Figure~\ref{fig:shear})
is comprised between two horizontal lines $c$ and $c+\delta$,
\ie the modulus of the coefficients of $P$ are, roughly speaking, varying exponentially.
If we briefly anticipate the computations of Section~\ref{sect:eval}, 
the average number of terms when~$z$ is uniformly distributed on~$\CC$
can be estimated with a simple weight (see Figure~\ref{fig:avgcoscos2})~:
\[
\avg_{\CC} \left(\# \ii{\ell}{r} \right) < 0.46 d \int_{ - \frac \pi 2}^{\frac \pi 2} L(f, \delta, \theta ) \cos \theta  \, d\theta\,.
\]
The computation~\eqref{eq:ex1} from Example~1 then provides an explicit bound:
\begin{equation}\label{eq:dreamWorld}
\avg_{\CC} \left(\# \ii{\ell}{r} \right) < \frac{d \delta |\log\delta|}{1+\lambda_0^2} 
= \frac{p +s(d) + \buff}{1+\lambda_0^2}\left| \log \frac{d}{p +s(d) + \buff}\right|\,\cdotp
\end{equation}
This means that, if our evaluator performs \textsl{once} as poorly as H\"orner, then it will, on average,
perform as $O(p M(p) \log d)$ if the evaluation points are chosen uniformly over the Riemann sphere~$\CC$
and~$\log_2 d\leq p\ll d$ or instead as $O(M(p) \log^2 d )$ if $p\leq \log_2 d$.
This is a much better behavior than the one claimed by Theorem~\ref{thm:algo} in general
and it is the best that we have observed in practice (see Figure~\ref{fig:HornerFPETheory}
and, for details, Section~\ref{sec:bench}).

\begin{remark}\label{rmk:betterIdea}
More generally, if the coefficients of $P$ are a union of a few long geometric progressions (even possibly intertwined),
the graph of $\ep$ will be composed of only a few piecewise straight lines, say $N\ll d$. Each straight line will only be visible
on a finite range of values of $|z|$ and will contribute a logarithmic complexity bounded by~\eqref{eq:dreamWorld}.
The overall average complexity of the $\A_p$ evaluator will then be bounded by
\begin{equation}
O(N M(p) (p+\log d) \log d)
\end{equation}
if the evaluation points are chosen uniformly over the Riemann sphere~$\CC$ (or~$\RR$ in the real case).
\end{remark}

\bigskip
For further details and the construction of an example that saturates the upper bound~\eqref{eq:compAlgo},
see Section~\ref{par:optimal}.

\section{Complexity analysis and proof of Theorem~\ref{thm:algo}}
\label{sect:comp}

In this section, we describe the details of the algorithm $\A_p$ and prove Theorem~\ref{thm:algo} regarding complexity.

\subsection{Analysis of the preconditioning phase}
\label{sect:preCond}

We describe briefly the computation of the concave hull $\ep$.
The first step is standard and consists in obtaining an enumeration $(k_n)$ of $\ii{0}{d}$ to sort the values $s_n\defequal s(a_{k_n})$
in decreasing order, \ie such that for all $n \in \ii{0}{d-1}$,
\[
s_n \geq s_{n+1} \,.
\]
In case of equality, $k_n$ is chosen in increasing order (\ie $k_n\leq k_{n+1}$ if $s(a_{k_n})=s(a_{k_{n+1}})$).
This step can be performed in $d \log_2 d$ operations.
Observe that $k_0 = \argmax \big(\ep \big)$. 
\begin{lemma}
For $n \in \ii{0}{d}$,  we construct a sequence of concave maps $E_n:[\ell_n,r_n] \rightarrow \R$ such that, for all $n$,
$[\ell_n,r_n]$ is the convex hull of $\{ k_0,\ldots,k_n\}$ and
\[
\forall k\in\ii{\ell_n}{r_n}, \qquad s(a_k) \leq E_n(k) \leq \ep(k) \,.
\]
Constructing $E_{n+1}$ knowing $E_n$ is performed in $\log_2 n$ operations.
\end{lemma}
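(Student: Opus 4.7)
The plan is to prove the lemma by induction on $n$, giving an explicit online construction of $E_n$ as the piecewise linear concave cover of the $n+1$ points $\{(k_j, s_j) : 0 \leq j \leq n\}$.

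\textbf{Initialization and inductive hypothesis.} I would initialize $E_0$ as the trivial map on $[\ell_0, r_0] = \{k_0\}$ with value $s_0$, which clearly satisfies the stated properties since $k_0 = \argmax E_P$ gives $s(a_{k_0}) = s_0 = \ep(k_0)$. Assuming $E_n$ is a concave piecewise linear map on $[\ell_n, r_n]$ (the convex hull of $\{k_0, \ldots, k_n\}$) with $s(a_k) \leq E_n(k) \leq \ep(k)$, I distinguish two cases. If $k_{n+1} \in [\ell_n, r_n]$, then $[\ell_{n+1}, r_{n+1}] = [\ell_n, r_n]$ and I set $E_{n+1} = E_n$. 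This is valid because the endpoints of $E_n$ coincide with points of the form $(k_{j_\ell}, s_{j_\ell})$, $(k_{j_r}, s_{j_r})$ with $j_\ell, j_r \leq n$, so $\min_{[\ell_n,r_n]} E_n \geq \min\{s_{j_\ell}, s_{j_r}\} \geq s_n \geq s_{n+1} = s(a_{k_{n+1}})$, the first inequality following from concavity.

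\textbf{Extension when $k_{n+1}$ is outside.} If, say, $k_{n+1} > r_n$ (the symmetric case is analogous), I set $\ell_{n+1} = \ell_n$, $r_{n+1} = k_{n+1}$, and construct $E_{n+1}$ by identifying a breakpoint $\kappa$ of $E_n$ from which the segment to $(k_{n+1}, s_{n+1})$ is tangent to the graph of $E_n$. Concretely, let $\ell_n = \kappa_0 < \kappa_1 < \cdots < \kappa_m = r_n$ be the breakpoints of $E_n$; by concavity the slopes $\sigma_i$ of $E_n$ on $[\kappa_{i-1}, \kappa_i]$ are strictly decreasing in $i$. I choose the largest index $i^*$ such that $\sigma_{i^*} \geq (s_{n+1} - E_n(\kappa_{i^*}))/(k_{n+1} - \kappa_{i^*})$ and set $\kappa = \kappa_{i^*}$. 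I then define $E_{n+1}$ to agree with $E_n$ on $[\ell_n, \kappa]$ and to be linear from $(\kappa, E_n(\kappa))$ to $(k_{n+1}, s_{n+1})$ on $[\kappa, k_{n+1}]$.

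\textbf{Verification of the properties of $E_{n+1}$.} Concavity follows from the choice of $i^*$, which guarantees that the new slope is no larger than $\sigma_{i^*}$. The interval $[\ell_{n+1}, r_{n+1}]$ is by construction the convex hull of $\{k_0, \ldots, k_{n+1}\}$. For the lower bound $s(a_k) \leq E_{n+1}(k)$: on $[\ell_n, \kappa]$ it is inherited from $E_n$; on $[\kappa, k_{n+1}]$, any $k = k_m$ lying there satisfies either $m = n+1$ (equality holds) or $m > n+1$, in which case $s_m \leq s_{n+1} \leq E_{n+1}(k)$ by the same endpoint-concavity argument as before. For $E_{n+1}(k) \leq \ep(k)$ on the new segment, I use $\ep(\kappa) \geq E_n(\kappa)$ and $\ep(k_{n+1}) \geq s(a_{k_{n+1}}) = s_{n+1}$, together with concavity of $\ep$, which forces it to lie above the chord joining these two points, hence above $E_{n+1}$.

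\textbf{Complexity of the step.} The main work is locating $i^*$. Storing the breakpoints of $E_n$ in a balanced binary search tree keyed by abscissa (with slopes cached alongside) permits $O(\log n)$ membership test (Case~1), $O(\log n)$ binary search for $i^*$ exploiting the monotonicity of $\sigma_i$, and $O(\log n)$ deletion of the finitely many discarded breakpoints $\kappa_{i^*+1}, \ldots, \kappa_m$ together with the insertion of $(k_{n+1}, s_{n+1})$. The only mildly delicate point here is that although the total number of discarded breakpoints across all steps is $O(d)$ (each breakpoint is deleted at most once), a single step may trigger many deletions; these can be amortized into the total $O(d \log d)$ bound but, for the per-step worst case of $O(\log n)$ stated in the lemma, one invokes the binary search itself to locate $i^*$ without walking through the doomed breakpoints, and attributes their physical removal to a lazy cleanup that is charged against their insertions. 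This gives the claimed $\log_2 n$ bound per step and, summed, the $O(d \log d)$ cost of the preconditioning phase announced in Theorem~\ref{thm:algo}.
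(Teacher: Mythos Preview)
Your construction is essentially the paper's: both build $E_n$ as the incremental upper concave hull of $\{(k_j,s_j):j\le n\}$, extend to one side at each step, and locate the new tangent breakpoint by a binary search along the supporting lines of $E_n$. The paper phrases the search in terms of the ordinates $y_j^n$ of those lines evaluated at $x=k_{n+1}$, which are monotone by concavity; your slope criterion $\sigma_{i^*}\ge\tau_{i^*}$ is an equivalent reformulation (note, though, that binary search requires the monotonicity of $y_i$, not merely of $\sigma_i$).

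One small slip in your lower-bound verification: on the new linear piece $[\kappa,k_{n+1}]$ you assert that any $k=k_m$ lying there has $m\ge n+1$. This is false whenever $i^*<m$, since then $\kappa=\kappa_{i^*}<r_n$ and the overwritten interval $(\kappa,r_n]$ contains indices $k_m$ with $m\le n$ (for instance $r_n$ itself). The missing step is that maximality of $i^*$ gives $y_{i^*+1}<s_{n+1}$, whence the new slope $\tau_{i^*}$ exceeds $\sigma_{i^*+1}$, so the new segment lies \emph{above} $E_n$ throughout $[\kappa,r_n]$; the inductive bound $s(a_k)\le E_n(k)\le E_{n+1}(k)$ then carries over. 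With this addition your argument is complete and matches the paper's.
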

\noindent
Observe in particular that $E_d = \ep$ and that it is obtained in less than $d \log_2 d$ steps once $(k_n)$ is known.
An example of this construction is given on Figure~\ref{fig:pre}.

\bigskip
\begin{figure}[hbt!]
\begin{center}
\resizebox{0.82\textwidth}{!}{%
\setlength{\unitlength}{1cm}
\includegraphics[width=14cm]{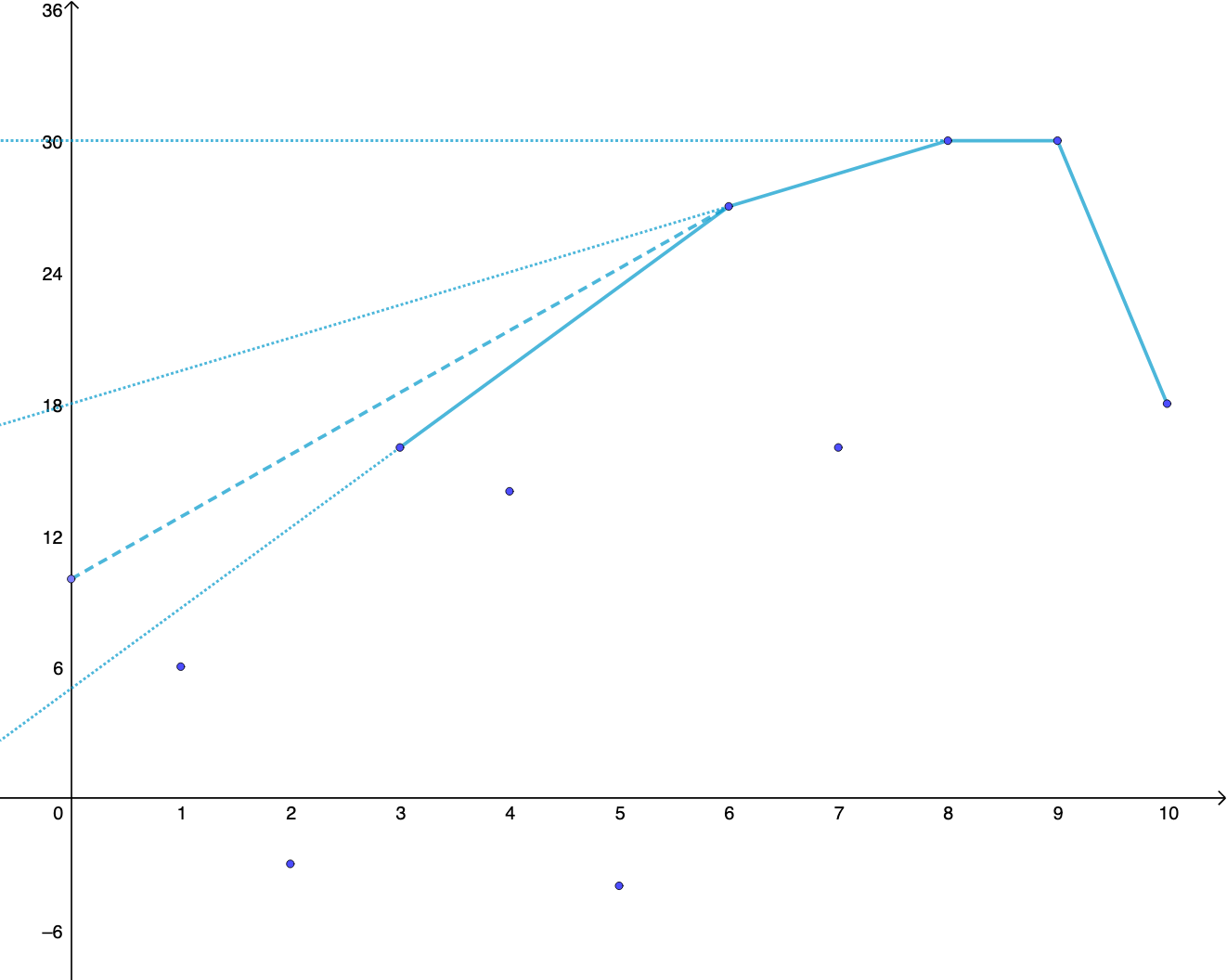}
\begin{picture}(0, 0)(14.1,0)
	\put(11.2, 9.2){\color{Cyan} $E_1$}
	\put(9.3, 8.8){\color{Cyan} $E_2$}
	\put(12.1, 8){\color{Cyan} $E_3$}
	\put(6.4, 6.8){\color{Cyan} $E_4=E_5=E_6$}
	\put(2.2, 5.8){\color{Cyan} $E_7$}
	\put(4.2, 7.9){\color{Cyan} $\widetilde{E}_4$}
	\put(3.3, 4.7){\color{Cyan} $\widetilde{E}_7$}
	\put(13.2, 2.3){$k$}
	\put(1, 10.3){\color{blue} $s(a_k)$}
	\put(5,10.5){\color{blue}\raisebox{.5pt}{\textcircled{\raisebox{-.5pt} {\footnotesize $i$}}} = \footnotesize $(k_i,s_i)$}
	\put(10.6, 9.8){\color{Gray}\raisebox{.5pt}{\textcircled{\raisebox{-.5pt} {\footnotesize 0}}}}
	\put(11.9, 9.8){\color{Gray}\raisebox{.5pt}{\textcircled{\raisebox{-.5pt} {\footnotesize 1}}}}
	\put(7.85, 8.95){\color{Gray}\raisebox{.5pt}{\textcircled{\raisebox{-.5pt} {\footnotesize 2}}}}
	\put(13.35, 6.75){\color{Gray}\raisebox{.5pt}{\textcircled{\raisebox{-.5pt} {\footnotesize 3}}}}
	\put(4.4, 5.65){\color{Gray}\raisebox{.5pt}{\textcircled{\raisebox{-.5pt} {\footnotesize 4}}}}
	\put(9.35, 5.65){\color{Gray}\raisebox{.5pt}{\textcircled{\raisebox{-.5pt} {\footnotesize 5}}}}
	\put(5.6, 5.15){\color{Gray}\raisebox{.5pt}{\textcircled{\raisebox{-.5pt} {\footnotesize 6}}}}
	\put(.85, 4.2){\color{blue}\raisebox{.5pt}{\textcircled{\raisebox{-.5pt} {\footnotesize 7}}}}
	\put(1.85, 3.15){\color{blue}\raisebox{.5pt}{\textcircled{\raisebox{-.5pt} {\footnotesize 8}}}}
	\put(3.1, 0.9){\color{blue}\raisebox{.5pt}{\textcircled{\raisebox{-.5pt} {\footnotesize 9}}}}
	\put(6.85, 0.65){\color{blue}\raisebox{.5pt}{\textcircled{\raisebox{+.2pt} {\tiny 10}}}}
	\put(0.85,3.1){\color{purple}$y_0^6$}
	\put(0.85,6.3){\color{purple}$y_1^6$}
	\put(0.85,9.2){\color{purple}$y_2^6$}
	\put(0.79,3.31){\color{purple}\circle*{0.08}}
	\put(0.79,6.58){\color{purple}\circle*{0.08}}
	\put(0.79,9.58){\color{purple}\circle*{0.08}}
\end{picture}
}
\caption{\label{fig:pre} The recursive construction of $\ep$ : here, the construction of $E_7$ from $E_6$.}
\end{center}
\end{figure}

\proof
It is immediate that $\ell_0=r_0=k_0$, $E_0(k_0)=s_0$ and that the graph of $E_1$ is the segment $[(k_0, s_0), (k_1, s_1)]$.
Assume now that $E_{n}$ is constructed. Then $s_{n+1} \leq \min E_n$ and $k_{n+1} \notin \{\ell_n,r_n\}$.
If $k_{n+1} \in (\ell_n,r_n)$ we let $E_{n+1}\defequal E_n$
(on Figure~\ref{fig:pre}, this is the case for $E_5=E_4$ and $E_6=E_5$).
Otherwise, by symmetry, we may assume that $k_{n+1} < \ell_n$ and we build $E_{n+1}$ as a left extension of $E_n$
(alternatively, if $k_{n+1} > r_n$, then we would build  $E_{n+1}$ as a right extension of $E_n$ in a similar way).
Let $\widetilde E_{n+1}$ be the extension of $E_n$ to $[k_{n+1},r_n]$ with the property that $\widetilde E_{n+1}$ is affine on
the interval $\big[k_{n+1}, \ell_n+1/2 \big]$.

If $s_{n+1} \leq \widetilde E_{n+1}(k_{n+1})$ (on Figure~\ref{fig:pre}, this is the case for $n+1=4$), we define $E_{n+1}$ to
coincide with $E_n$ on $[\ell_n, r_n]$, $E_{n+1}(k_{n+1})\defequal s_{n+1}$ and be affine on $[k_{n+1}, \ell_n]$. 

If $s_{n+1} > \widetilde E_{n+1}(k_{n+1})$ (on Figure~\ref{fig:pre}, this is the case for $n+1=7$),
the previous construction would not be concave. Therefore, we search to restrict $E_n$ to some
interval $[\ell_n', r_n]$, $\ell_n' > \ell_n$ and to add a new segment that includes $s_{n+1}$,
such that $E_{n+1}$ is concave.
We remove the segments whose slopes are smaller than the slope of the new segment
(on Figure~\ref{fig:pre}, we drop the segment of $E_6$ that is below the dashed line in $E_7$).

Precisely, let $\Sigma^n_0, \ldots , \Sigma^n_s$ be the lines containing a segment in the graph of  $E_n$,
enumerated from left to right.
As $E_n$ is concave, the $y$-coordinate of $\Sigma^n_j \cap \{x = k_{n+1}\}$, denoted by $y_j^n$,
is increasing in $j$. A binary search finds $j$ such that $s_{n+1} \in (y_{j-1}^n, y_j^n]$ in time $\log_2 n$.
The value $\ell_n'$ is the leftmost abscissa $x$ such that $(x,E_n(x))\in \Sigma_j^n$.
Then $E_{n+1}$ is defined as affine on $[k_{n+1},\ell_n']$ with $E_{n+1}(k_{n+1})=s_{n+1}$ and
$E_n$ coincides with $E_n$ on $[\ell_n',r_n]$.

As $(s_n)_{n\in\ii{0}{d}}$ is decreasing, zero coefficients are sorted last and are only treated
when the graph is already complete. Indeed, as we have assumed that $a_0 \neq 0$ and
that $a_d \neq 0$, if $s_{n+1} = - \infty$ then $k_{n+1} \in (\ell_n, r_n)$ and in that case we set $E_d = E_n$.
\qed

\bigskip
A final parsing of the list of $s(a_k)$ is performed to mark the indices $k$ such thats
\begin{equation}\label{eq:threshold}
s(a_k) \geq \ep(k) - p - s(d) - \buff \,.
\end{equation}
We denote by $G_p \subseteq \ii{0}{d}$ the set of these \emph{good} indices and by
$B_p\defequal \ii{0}{d} \setminus G_p$ the set of \emph{bad} indices.
This step has a linear time complexity.
In subsequent evaluations, only the monomials $(a_k z^k)_{k\in G_p}$ are kept. In what follows,
we show that those associated with~$B_p$ cannot influence the first $p$ bits of the result. The set $G_p$ will be thinned even more during
the evaluation phase, depending on $|z|$.

\bigskip
Let us emphazise that the complexity of the preconditioning does \emph{not} depend on the precision $p$.
If the coefficients $a_k$ are provided in machine floating-point numbers, obtaining $s(a_k)$ is performed in constant
time using hardware-accelerated functions. In the case of an arbitrary precision $p$, the value
$s(a_k)$ is already computed and stored in the number format and there is nothing to do.
All computations for the preconditioning phase can thus be performed with machine floating-point numbers.

\begin{remark}\label{rem:partialpreproc}
Let us mention a slight variant of our algorithm, which is based on the fact that
the lines 2 and 3 of the algorithm $\A_p$ are independent of the value of $p$.
For certain applications, one could split the preconditioning in two parts.
The computation of the concave map~$\ep$ could be done during the compilation (if $P$ is known in advance)
or at early runtime without any knowledge of $p$ (if a low-precision version of $P$ is availlable).
Once the precision $p$ is known, one will finish the preconditioning (\ie determine the set $G_p$, line 4 of $\A_p$) in time $O(d)$.
Subsequent evaluations of $P$ will be performed as before, using only lines~5-11 of~$\A_p$.
\end{remark}

\subsection{Analysis of the evaluation phase}
\label{sect:eval}

To compute $k_\lambda\defequal \argmax\big(\ep + \lambda \mathrm{Id}\big)$, observe that $\ep + \lambda \mathrm{Id}$ is concave.
That is, its derivative (in our case the slope of the segments from some point~$(k, s(a_k) + \lambda k)$
to the next one~$(k',s(a_{k'}) + \lambda k')$) is decreasing.
Therefore, a binary search finds $k_\lambda$ in $\log_2 d$ operations.
The maximum value is
\begin{equation}\label{eq:defN}
N_\lambda\defequal \ep(k_\lambda) + \lambda k_\lambda \,.
\end{equation}

Next, as $\ep + \lambda \mathrm{Id}$ has at most two monotone branches (separated by $k_\lambda$)
we can perform a binary search on each of them to find respectively the two
indices~$\ell<k_\lambda$ and~$r>k_\lambda$ such that $\ii{\ell}{r}$ is the largest integer interval that satisfies
\begin{equation}\label{def:LRN}
\ii{\ell}{r} \subset \SetDef{k \in \ii{0}{d}}{\ep(k) + \lambda k \geq \max\big( \ep + \lambda \mathrm{Id} \big) - p - s(d) - \buff}.
\end{equation}
Each of these searches costs at most $\log_2 d$ operations.
Therefore lines 5-10 of $\A_p$ cost $2+3\log_2 d$ operations, which is the first part of~\eqref{eq:compAlgo} in Theorem~\ref{thm:algo}.

\bigskip
Let us now focus on the complexity analysis of the last step (line~11) of the algorithm~$\A_p$.
Formula \eqref{equ:cos} reads $L(f-(\tan\theta) \operatorname{Id},\delta,0)=L(f,\delta,\theta)\cos\theta$; 
joined with~\eqref{def:LRN}, it implies
\[
\frac{r - \ell}{d} \leq L(f, \delta, \theta ) \cos \theta
\]
where $f$ and $\delta$ are defined by~\eqref{eq:def_f_from_Ep} and $\lambda = -\tan\theta$ (the minus sign reflects that
positive slopes correspond to evaluation points $z$ such that $|z|<1$).
The metric on the Riemann sphere $\CC$ that is associated with a uniform probability measure is given by
\[
g_{\CC} = \frac{dx^2+dy^2}{\pi (1+x^2+y^2)^2} \,\cdotp
\]
The corresponding volume element is
\[
\sqrt{|g_{\CC}|} \, dx \wedge dy = \frac{dx \wedge dy}{\pi (1+x^2+y^2)^2} \,\cdotp
\]
For a radial function and $r^2=x^2+y^2$, the volume element becomes
\[
\frac{2 r dr}{(1+r^2)^2} \quad\text{on}\quad[0,\infty)
\]
and with the subsequent change of variable $\log_2 r = - \tan\theta$, it turns into
\[
-\frac{2 \ln 2}{\cos^{2}\theta} \frac{4^{\tan\theta}}{(1+4^{\tan\theta})^2} \, d\theta \quad\text{on}\quad
\left(-\frac{\pi}{2},\frac{\pi}{2}\right).
\]
Therefore, the average number of monomials that are required to evaluate a polynomial of degree $d$ with our algorithm, when
the point $z=x+i y$ is chosen uniformly on the Riemann sphere $\CC$, is bounded from above by
\begin{equation}\label{eq:omegatheta}
\avg_{\CC} \left(\# G_p \cap \ii{\ell}{r}\right)
\leq \avg_{\CC} \left(r-\ell  + 1 \right)
\leq 1+ d \int_{ - \frac \pi 2}^{\frac \pi 2} L(f, \delta, \theta ) \, \omega(\theta)  \, d\theta \,,
\end{equation}
with (note that $\omega$ is even):
\[
\omega(\theta) =  \frac{2 \ln 2}{\cos\theta} \frac{4^{\tan\theta}}{\left(1+4^{\tan\theta}\right)^2} \,\cdotp
\]
Therefore, Theorem~\ref{thm:conc2} implies
\begin{equation}\label{eq:avgestim}
\avg_{\CC} \left(\# G_p \cap \ii{\ell}{r} \right) \leq 1+C_\omega d \sqrt{\delta} =  1+C_\omega \sqrt{d(p +s(d) + \buff)}\,,
\end{equation}
with $C_\omega < 1.9046$ and whose exact numerical value is given by~\eqref{main_estim_ter_techsupport}.

\medskip
\begin{figure}[H]
\begin{center}
\includegraphics[width=14cm]{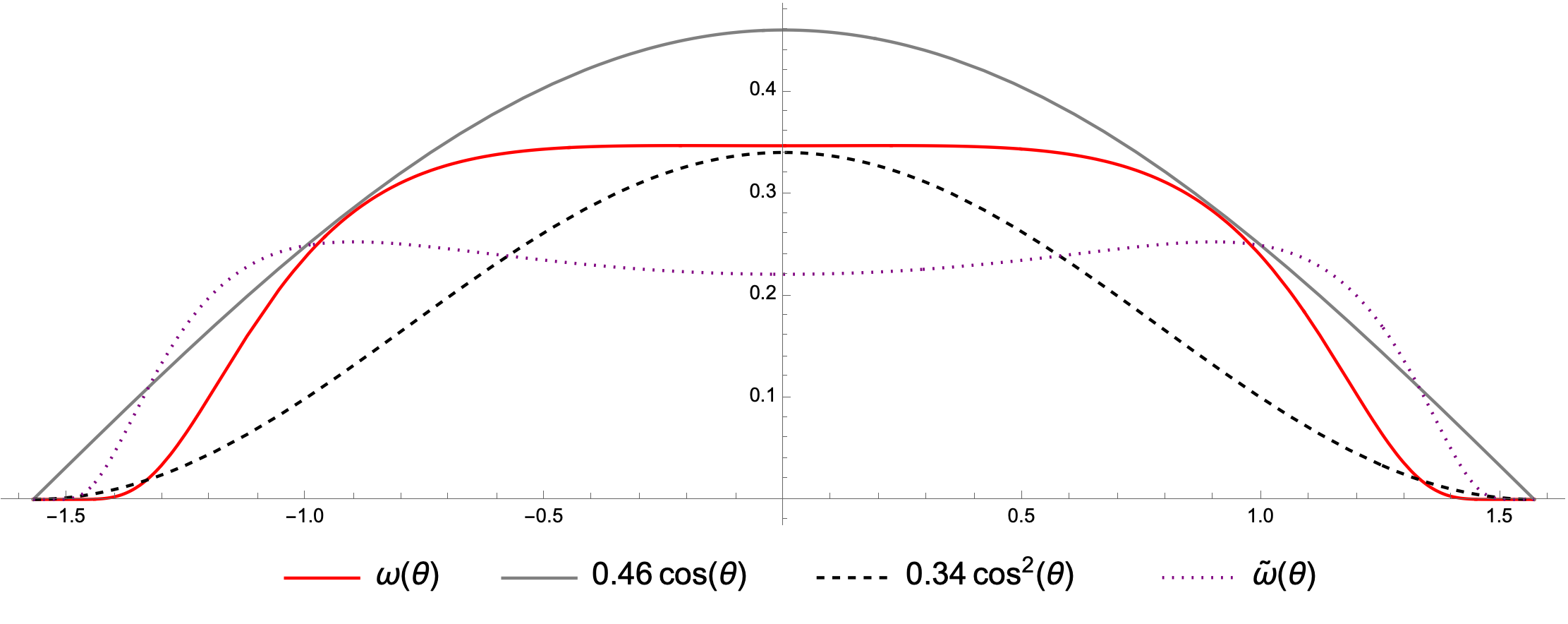}
\caption{\label{fig:avgcoscos2}
Graph of the weight $\omega(\theta)$ from~\eqref{eq:omegatheta} and its comparison with
$0.46 \cos\theta$ (gray) and $0.34 \cos^2\theta$ (dashed). This comparison justifies our interest for
those particular weights in Theorem~\ref{thm:conc}. The intermediary range $|\theta|<1.34$ corresponds roughly to $1/20<|z|<20$.
The weight~$\widetilde\omega(\theta)$ from~\eqref{eq:omegathetatilde} is for the real valued case.}
\end{center}
\end{figure}

\begin{remark}
Note that using the looser estimate $\omega(\theta)<0.46\cos\theta$ (see Figure~\ref{fig:avgcoscos2}) and
the numerical constant of Theorem~\ref{thm:conc} overshoots the value of~$C_\omega$ by~$42\%$.
\end{remark}

To conclude the evaluation of $P(z)$, we compute $z^\ell$ in $2\log_2 \ell$
steps and then use H\"orner's method to compute $Q_\lambda(z)$.
The average arithmetic complexity of line 11 of the algorithm~$\A_p$ is thus bounded by
\begin{equation}\label{eq:complexityAlgo}
2\log_2 d + C_\omega \sqrt{d(p +\log_2 d + \buffpo)}\,,
\end{equation}
while the bit/time complexity is $M(p)$ times larger.
Putting \eqref{eq:avgestim} and \eqref{eq:complexityAlgo} together gives the last part of~\eqref{eq:compAlgo} in Theorem~\ref{thm:algo}.

\bigskip
In the case of a real polynomial evaluated along the real line, the uniform measure on the circle~$\RR=\R\cup\{\infty\}$
obtained by stereographic projection is
\[
\frac{dx}{\pi(1+x^2)} \,\cdotp
\]
With the change of variable $\log_2 |x| = \tan\theta$, one gets
\begin{equation}\label{eq:omegathetatilde}
\avg_{\RR} \left(r-\ell\right) \leq d \int_{ - \frac \pi 2}^{\frac \pi 2} L(f, \delta, \theta ) \, \widetilde{\omega}(\theta)\, d\theta
\quad\text{with}\quad
\widetilde\omega(\theta) = \frac{2 \ln 2}{\pi \cos\theta}  \frac{2^{\tan\theta}}{1+4^{\tan\theta}} \,\cdotp
\end{equation}
Using~\eqref{main_estim_ter_techsupport} again provides
\begin{equation}
\avg_{\RR} \left(\# G_p \cap \ii{\ell}{r} \right) \leq 1+C_{\widetilde\omega} \sqrt{d(p +s(d) + \buff)}
\quad\text{with}\quad
C_{\widetilde\omega} <1.7673 \,.
\end{equation}
The average complexity in the real-valued case is given by~\eqref{eq:complexityAlgo} with $C_{\widetilde\omega}$
instead of $C_{\omega}$.

\begin{remark}\label{rmk:case_disk}
One can easily adapt the computation to the case of a complex polynomial evaluated at
$z$ uniformly distributed over the unit disk $D(0,1)$. The weight becomes
\[
\omega_{D}(\theta) = \frac{2 \ln 2}{\cos\theta} \frac{1}{4^{\tan\theta} } \qquad\text{on}\qquad \left[0,\frac{\pi}{2}\right).
\]
The asymmetry of $\omega_D$ implies that one must restrict the integral of Lemma~\ref{lemma:cgt-var45}
to~$y\geq0$, \ie[~]$x_2\geq x_1$. Theorem~\ref{thm:conc2} remains valid with
\[
C_{\omega_D} = 
\sqrt{2} \|\omega_D\|_{L^\infty\left(0,\frac{\pi}{4}\right)} + 2 \int_{-\infty}^{-\frac{\sqrt{2}}{4}}
\frac{\omega_D\left(\arctan(\frac{1}{2}-\sqrt{2}x)\right)}{\sqrt{1+2\left(x-\frac{\sqrt 2}{4} \right)^2}} dx
= \frac{1+8\ln 2}{2\sqrt{2}}\,\cdotp
\]
The numerical value satisfies $C_{\omega_D} < 2.3141$ and one can claim
\begin{equation}\label{eq:case_disk}
\avg_{D(0,1)} \left(\# G_p \cap  \ii{\ell}{r} \right) \leq 1+C_{\omega_D} \sqrt{d(p +s(d) + \buff)} \,.
\end{equation}
\end{remark}

\subsection{Example that (almost) saturates the upper bound on complexity.}
\label{par:optimal}

Because of the fast decay of $\omega(\theta)$ as $\theta\to\pm\pi/2$, it is not possible to reuse directly the lower
bound obtained in Theorem~\ref{thm:conc} for the weight $\cos^2\theta$.
However, the examples of Section~\ref{par:thmconcLower} can be adapted easily to saturate the complexity
of the algorithm~\A{}.

\bigskip
Inspired by the second example,
let us consider a polynomial $P$ whose coefficients have a scale profile that follows a half-circle, for example:
\begin{equation}\label{eq:halfcirclepoly}
P(z) = \sum_{n=0}^d 2^{\sqrt{(n+1)(d+1-n)}} z^n.
\end{equation}
Reasoning as in Section~\ref{sect:eval} and using the maximality of $\ii{\ell}{r}$ in~\eqref{def:LRN}, we get
\[
\frac{r-\ell+2}{d} \geq L(f,\delta,\theta)\cos\theta
\]
for any $z\in\C^\ast$ such that $\log_2 |z| = \tan\theta$ and $f$, $\delta$ defined by~\eqref{eq:def_f_from_Ep}.
The average arithmetic complexity of the evaluation of $P$ when $z\in\CC$ (resp. $z\in\RR$) is bounded from below by
\[
\avg_{\CC}(r-\ell+1) \geq -1 + d \int_{-\pi/2}^{\pi/2} L(f,\delta,\theta) \omega(\theta) d\theta
\]
or, respectively, the same integral with $\widetilde\omega$ in place of~$\omega$.
One can check easily that $E_P$ defined by~\eqref{def:EP} satisfies
\[
\forall n\in\ii{0}{d},\qquad
\ep(n) \geq E_P(n) = 1+ \left\lfloor \sqrt{(n+1)(d+1-n)} \right\rfloor \geq \sqrt{n(d-n)}
\]
and $\ep(n)\leq \sqrt{n(d-n)} + C\sqrt{d}$, thus
\[
 \sqrt{x(1-x)} \leq f(x)\leq \sqrt{x(1-x)} + \frac{C}{\sqrt{d}} \,\cdotp
\]
As $d\to\infty$, the graph of $f$ converges uniformly to that of $\sqrt{x(1-x)}$, which is concave.

\medskip
With the notations of Example 2 of Section~\ref{par:thmconcLower}, the average complexity is thus
asymptotically bounded from below by
\[
4d\sqrt{\delta}\int_0^{\theta _0}\sqrt{(1-\delta \cos\theta)\cos \theta}    \, \omega(\theta)d\theta
=C_3(\delta) \sqrt{d(p+s(d)+\buff)} \,.
\]
The leading coefficient $C_3(\delta)$ is maximal at $\delta\to0$ \ie $d\to\infty$.
The asymptotic value is $C_3(0)\simeq 1.32178$.
In the real case,  the average complexity is asymptotically bounded from below by $C_4(\delta) \sqrt{d(p+s(d)+\buff)}$
with $C_4(0)\simeq 1.04074$.

\medskip
The theoretical predictions of this section have been confirmed, in practice:
polynomials~\eqref{eq:halfcirclepoly} are the slowest to evaluate (see Section~\ref{sec:bench}
and, in particular, Figure~\ref{fig:compTypeSphLine}).

\bigskip
\begin{figure}[H]
\captionsetup{width=.95\linewidth}
\begin{center}
\includegraphics[width=\textwidth]{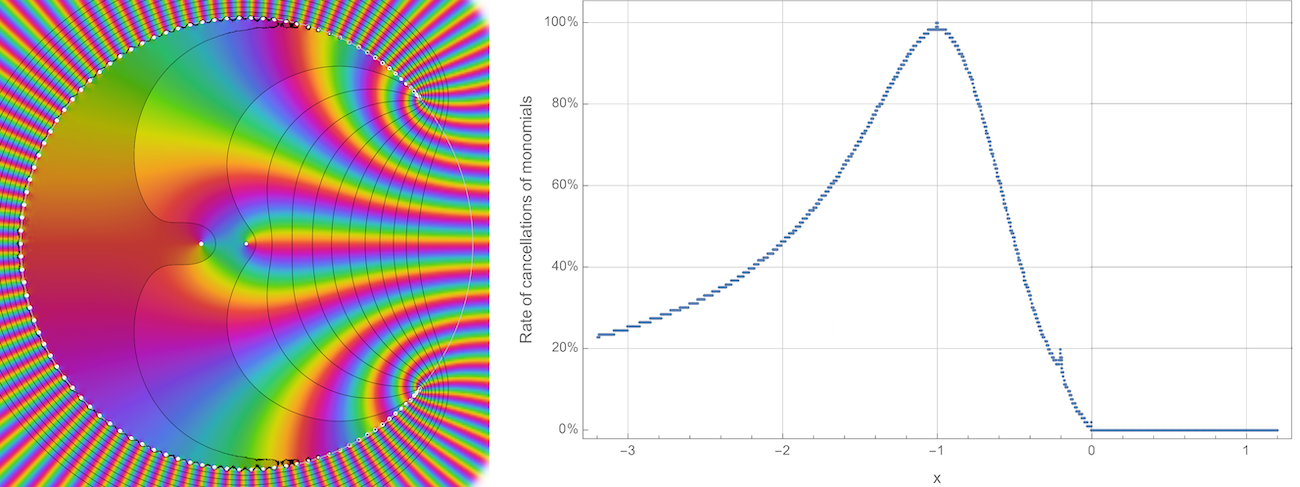}\\[1ex]
\includegraphics[width=\textwidth]{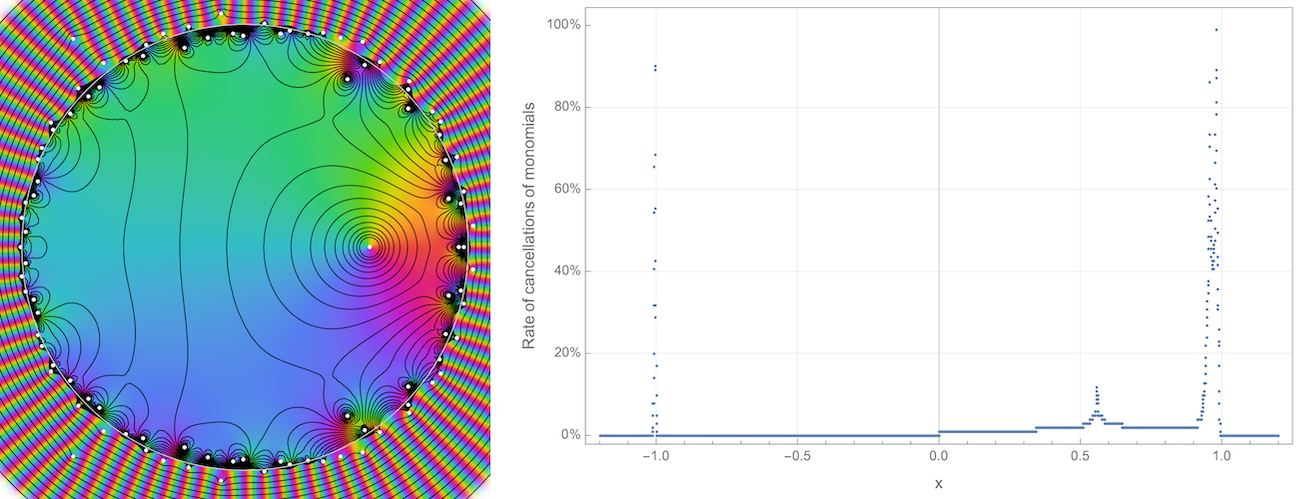}
\caption{\label{fig:HalfCircleBenchmark}
Comparison between a polynomial~\eqref{eq:halfcirclepoly} whose coefficients obey the half-circle law (above)
and a polynomial of the same degree whose coefficients obey a normal law (below). The complex plots (left)
illustrate phase (in color), level lines (gray) and roots (white). The rate of cancelation of monomials is
computed along the real axis (right) and illustrates that half-circle polynomials have extreme cancelations along the
negative real axis, which extend far beyond the immediate vicinity of their roots.
}
\end{center}
\end{figure}

Evaluation benchmarks with polynomials~\eqref{eq:halfcirclepoly} whose coefficients obey the half-circle law
lead to an interesting observation. As the degree increases, these polynomials appear to be extremely difficult
to evaluate precisely along the real line. At degree 1000, only half  of the computations with 600 bits along the real line are fully trustworthy;
about 20\% of the computations lead to at least 200 bits being identified as uncertain by our FPE algorithm. At degree 33\,113,
half of the 600-bit computations report that no bit is trustworthy. This exceptional situation piqued our interest because the
roots of these polynomials appear to concentrate mostly along a sub-arc of the unit circle, which means that evaluations
along the real line are usually not in the direct vicinity of a root.

A deeper analysis (see Figure~\ref{fig:HalfCircleBenchmark})
suggests that polynomials in this family have an extremely high \textit{cancelation rate} of the monomials, which at a given $z\in\C$
is defined as the proportion of the monomials $(a_n z^n)_{n\in\ii{0}{d}}$ such that $|a_nz^n|>|P(z)|$. Of course, along the positive half of the
real axis, no cancelations can occur because all the coefficients are positive.
In comparison, the cancelation rates for other families of polynomials seem to spike in much narrower regions
of the complex plane. This observation is consistent with our statements on the complexity of the \A{} algorithm
and \emph{draws a parallel} between slow \A{} evaluations and precision loss.

\section{Error analysis and proof of Theorem~\ref{thm:equivAlgo}}
\label{sect:err}

In this section, we prove the correctness of the algorithm $\A_p$, \ie Theorem~\ref{thm:equivAlgo}.
We adopt the notations from Section~\ref{par:algo} and show that 
\[ 
P(z) \approx_{p -c}Q_\lambda(z) \,,
\]
where $c\geq0$ is defined by~\eqref{eq:def_prec_loss}.
In the light of the property \eqref{eq:eqp_crit}, it is enough to show instead that 
\begin{equation}\label{eq:target}
\Big | P(z) - \sum_{k \in G_p \cap \ii{\ell}{r}} a_k z^k \Big | \leq 2^{-(p-c)-2} \left| P(z) \right |.
\end{equation}

\smallskip
Let us assume first that $|P(z)| \geq |a_{k_\lambda} z^{k_\lambda}|$ where $k_\lambda\defequal \argmax\big(\ep + \lambda \mathrm{Id}\big)$,
\ie (roughly speaking)
that there is no cancelation of leading bits. Using~\eqref{eq:defLambda} and~\eqref{equ:dsz}, one gets:
\[
|P(z)| \geq 
|a_{k_\lambda} z^{k_\lambda}| = |a_{k_\lambda}| 2^{\lambda k_\lambda}
\geq 2^{s(a_{k_\lambda}) + \lambda k_\lambda-1}  = 2^{N_\lambda-1} \,,
\]
with $N_\lambda$ defined by~\eqref{eq:defN}.
Thanks respectively to the definitions \eqref{def:LRN} and~\eqref{eq:threshold}, one has
\[
s(a_k) + \lambda k \leq \begin{cases}
\ep(k) + \lambda k \leq N_\lambda - p - s(d) - \buffpo & \text{if } k\notin\ii{\ell}{r} \,,\\
\ep(k) + \lambda k - p - s(d) - \buffpo & \text{if } k\in \ii{\ell}{r}\backslash G_p \,.
\end{cases}
\]
In both cases, we get $s(a_k) + \lambda k \leq N_\lambda - p - s(d) - \buffpo$
for $k\in D_p(\lambda)\defequal  \ii{0}{d} \setminus (\ii{\ell}{r} \cap G_p)$,
\ie for each dropout monomial. Using~\eqref{eq:defLambda} a second time, we get any $k\in D_p(\lambda)$:
\[
|a_k z^k| = |a_k| 2^{\lambda k} <  2^{s(a_k) + \lambda k} 
\leq 2^{N_\lambda-p-s(d)-\buffpo} \leq 2^{-p - s(d) - \buff} |P(z)| \,.
\]
We may now estimate $R(z)\defequal  P(z)-Q_\lambda(z)$ using $\#  D_p(\lambda) \leq d < 2^{s(d)}$ or, equivalently, using~\eqref{equ:bs}:
\begin{equation}
\label{equ:err}
 |R(z)| \leq \sum_{k \in  D_p(\lambda)} \big| a_k z^k \big| < d \, 2^{-p - s(d) - \buff} |P(z)| < 2^{-p-\buff} |P(z)| \,.
\end{equation}
In this case,~\eqref{eq:target} holds with $c=0$ (with a margin of 1 bit) and $P(z) \approx_{p} Q_\lambda(z)$.
The lazy algorithm is therefore essentially exact when no leading bits get canceled.

\medskip
In the case where $|P(z)| < |a_{k_\lambda} z^{k_\lambda}|$, some of the most significant bits cancel each other.
More precisely, let us define $c\in\N$ by
\begin{equation}\label{eq:slice}
c=s(a_{k_\lambda} z^{k_\lambda}) - s(P(z)) = \left\lfloor\log_2 |a_{k_\lambda} z^{k_\lambda}|\right\rfloor - \big\lfloor \log_2 | P(z)| \big\rfloor\,.
\end{equation}
According to Remark~\ref{rmk:cancel}, exactly $c$ leading bits have been canceled while computing~$P(z)$.
In this case, as we carry all computations with a fixed
precision of~$p$ bits, only the first~$p-c$ bits of the result are meaningful (plus one implicit leader).
One still has
\[
|P(z) - Q_\lambda(z)| =  |R(z)| < d\, 2^{N_\lambda-p-s(d)-\buffpo} < 2^{N_\lambda-p-\buffpo} \,.
\]
On the  other hand, using~\eqref{equ:shear}, one has now $ |P(z)| \geq 2^{s(a_{k_\lambda}z^{k_\lambda})-c-1}\geq 2^{N_\lambda-c-2}$ thus
\[
|R(z)|  < 2^{-(p-c)-\buffmo} |P(z)|
\]
and~\eqref{eq:target} holds in this case too.

\section{Applications}
\label{sect:applic}

In this section, we expose a few possible applications of the \A{} algorithm, at both the theoretical and practical levels.

\subsection{Parsimonious representation of polynomials}\label{par:parsimonious}

At a theoretical level, Theorem~\ref{thm:equivAlgo} states the existence of a \emph{parsimonious}
representation of any polynomial. This reduction can be computed algorithmically, is valid on any given
annulus of $\C$ and guarantees a fixed arbitrary bound on the relative error.

\bigskip
For example, let us consider the Chebyshev polynomials $T_n(\cos x)=\cos(nx)$.
They are the archetype of evaluations with extreme cancelations because each $T_n$ maps the interval~$[-1,1]$ onto
itself while the coefficients~$(a_{n,j})_{0\leq j\leq n}$ of~$T_n$ grow exponentially
(namely $\max_j |a_{n,j}|\lesssim 2^{1.26n}$, as indicated by the maximum point of Figure~\ref{fig:chebyshev}, left).
The scale profile of the coefficients of~$T_n$ renormalized with~\eqref{eq:def_f_from_Ep},
\ie $s(a_{n,j})/n$ appears to converge towards a fixed profile (red curve on Figure~\ref{fig:chebyshev}).
Taking this fact for granted,
Theorem~\ref{thm:equivAlgo} predicts the degree~$q(n,\alpha)$ such that the reduced polynomial
\[
Q_{n,\alpha}(x) \defequal  T_n(x) \,\operatorname{mod}\, x^{q(n,\alpha)}
\]
provides an accurate approximation of~$T_n(x)$ over the interval~$[-\alpha,\alpha]$.
For example, for $\alpha=0.3\simeq 2^{-1.74}$, Figure~\ref{fig:chebyshev} shows
that the maximum of $|a_{n,k} 0.3^k|$ is achieved for $k_n\simeq 0.285n$ and that $s(a_{n,k_n}) \simeq 0.9n$.
For $n=200$, $\max |a_{n,k} 0.3^k| \simeq 2^{200(0.9-1.73\times 0.285)} \simeq 2^{81}$, which means that $c\simeq 81$ leading bits will be lost in the computation
of $T_{200}(x)$ when $x\simeq 0.3$. Theorem~\ref{thm:equivAlgo} with $p=85$ ensures that $T_{200}(0.3)$ can be computed with at
least~3 significant bits if we keep the coefficients above the dashed line on Figure~\ref{fig:chebyshev} (offset $\delta\simeq 96$ bits),
\ie if we drop the last 25\% of the coefficients.
In general, this proportion is independent of $n$ and we can claim that $q(n,\alpha)/n$ too is asymptotically independent of $n$.
A direct proof of this result (without Theorem~\ref{thm:equivAlgo}) does not seem obvious.

\begin{figure}[H]
\begin{center}
\includegraphics[width=.97\textwidth]{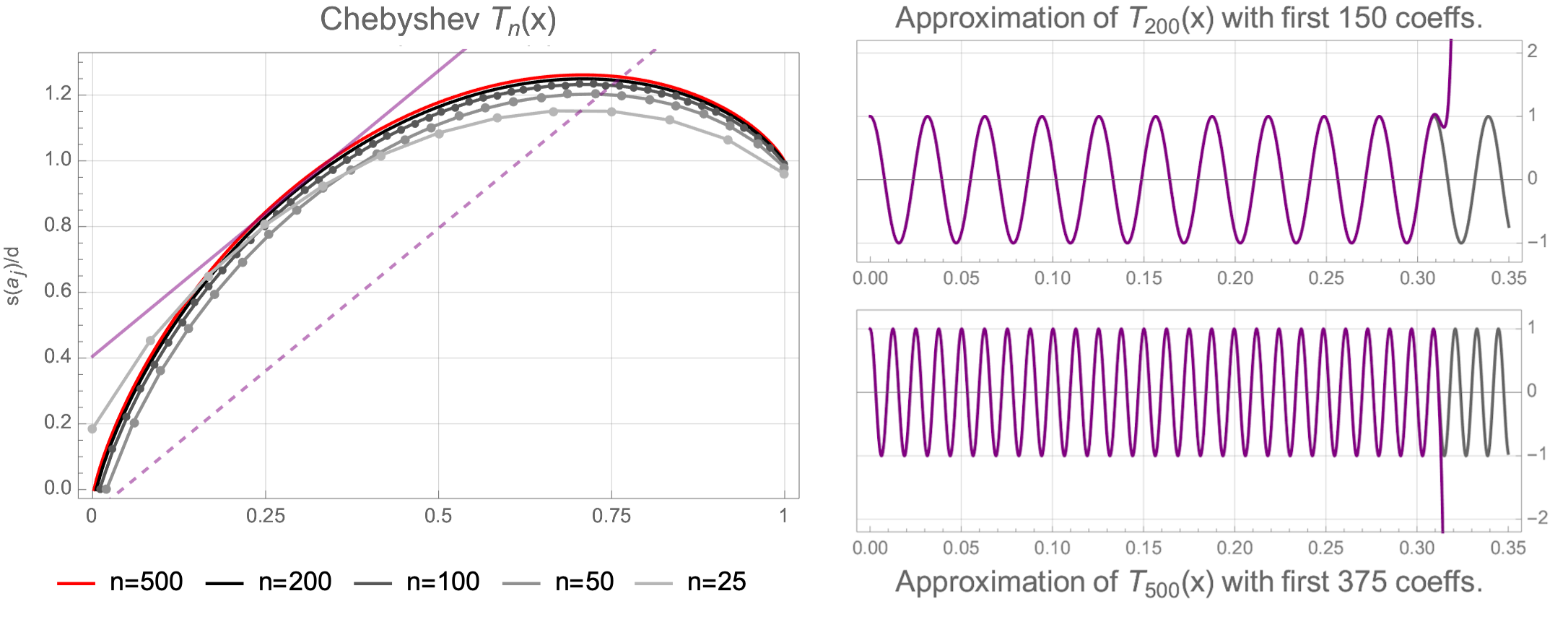}
\caption{\label{fig:chebyshev}
Scale of the coefficients of Chebyshev polynomial $T_n(\cos x)=\cos(nx)$, normalized with~\eqref{eq:def_f_from_Ep}.
The slope of the purple lines correspond to $|z|=0.3$.
As the renormalized profile of the coefficients is asymptotically independent of $n$ (red curve),
the reduction of $T_n$ to the first 75\% of the coefficients (above dashed line) is accurate on [-0.3,0.3] for any large $n$ (right).
}
\end{center}
\end{figure}

\begin{figure}[H]
\begin{center}
\includegraphics[width=.97\textwidth]{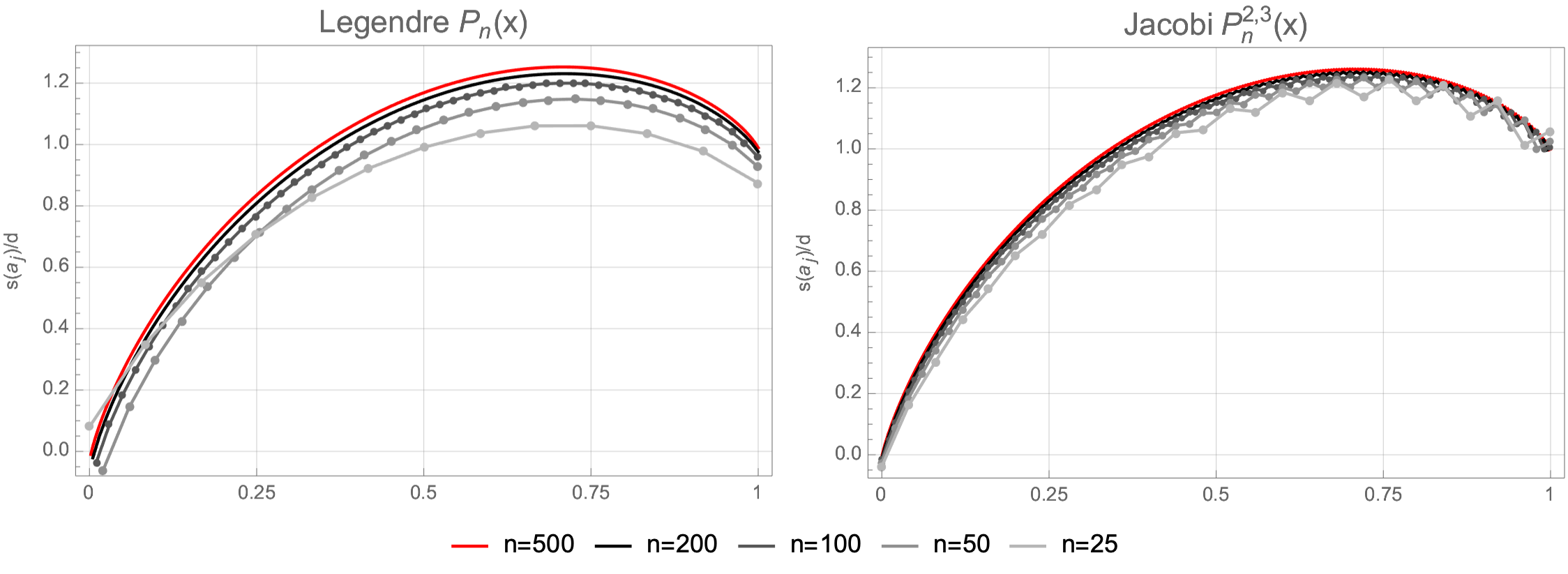}
\caption{\label{fig:jacobi}
Scale of the coefficients of Legendre polynomials $P_n$ (left)
and of a generic example of Jacobi polynomials~$P_n^{(\alpha,\beta)}$ (right),
normalized according to~\eqref{eq:def_f_from_Ep}.
Numerically, the renormalized profiles appear, as in Figure~\ref{fig:chebyshev}, asymptotically independent of $n$.}
\end{center}
\end{figure}

\medskip
The Jacobi polynomials~$P_n^{(\alpha,\beta)}$ and, in particular, the Legendre polynomials~$P_n$
enjoy a similar property (Figure~\ref{fig:jacobi}), which may be of interest for mathematical physics.

\medskip
The engineering pressure towards better onboard electronics, using microcontrolers and field-programmable gate arrays,
requires that some non-linear functions be computed quickly, often in reduced precision (\eg 32, 16 and even 8 bits),
with hardware-specific optimizations.
This problem has revived interest$^\ast$\footnote{$^\ast$ The authors thank \href{https://www.lri.fr/~falcou/}{Joel Falcou} (LRI, Université Paris Saclay) for pointing out this application.} in the Remez algorithm on the polynomial approximation of an arbitrary function that minimizes the $L^\infty$-error, \ie minimax approximation~\cite{REM1934}, \cite{REMLibrary}.
For example, when dealing with periodic functions, engineers  are interested in bypassing a costly reduction mod~$\pi/2$
if a suitable interpolator provides accurate values on the natural range of angles for their problem.

\bigskip

For a given range of evaluation points, the algorithm~\A{} will either provide a further reduction of the number of coefficients
needed at a given precision, or conversely, it will show that no further reduction is possible (see~\eg[~]Figure~\ref{fig:chebyshev}).
In both cases, such a result provides theoretical backing for the implementation choices. The \texttt{-analyse} task in
our implementation~\cite{FPELib} (see Section~\ref{sect:code}) provides a rudimentary tool to perform this analysis.

\medskip
In practice, the level of parsimony achieved by the \A{} algorithm can be remarkably high.
For example, Figure~\ref{fig:decimationCircle1000Sph}
illustrates the proportion of monomials that are kept in $Q_\lambda(z)$ and therefore lead the value of $P(z)$.

\begin{figure}[H]
\captionsetup{width=.95\linewidth}
\begin{center}
\includegraphics[width=\textwidth]{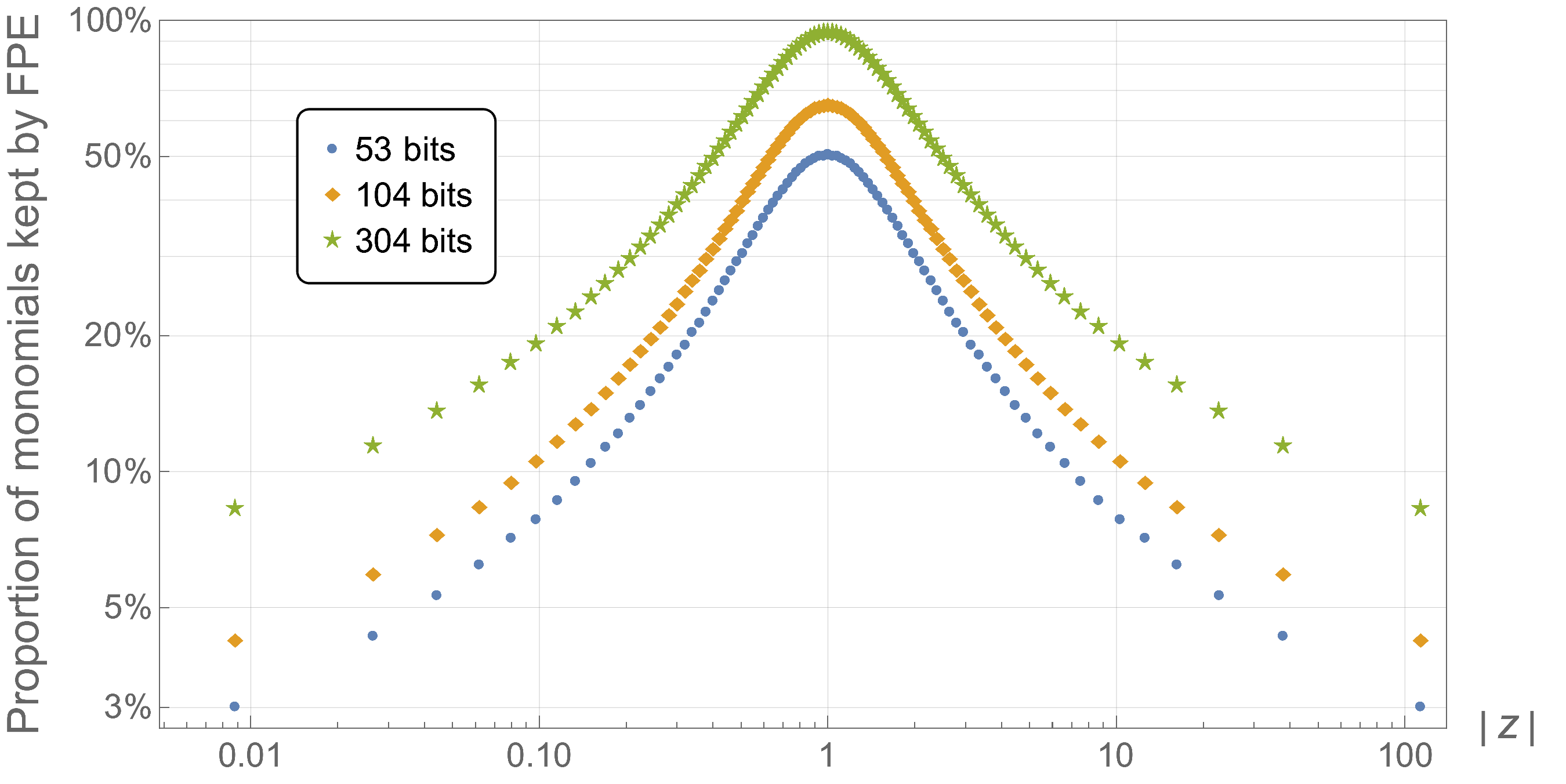}
\caption{\label{fig:decimationCircle1000Sph}
Proportion of monomials kept by the \A{} algorithm
in the evaluation of a half-circle polynomial~\eqref{eq:halfcirclepoly} of degree 1000 for various precisions.
Observe how most values are determined by very few leading monomials.
}
\end{center}
\end{figure}

\subsection{Application to root finding with Newton's method}

On the practical side, Theorem~\ref{thm:algo} ensures the following two benefits.

\medskip
Firstly, for a given allotment of computation time, one can perform $k_H$ evaluations of a 
certain polynomial with H\"orner's
method, or $k_\A$ evaluations with the \A{} algorithm.
For a given precision $p$ and large $d$, the \emph{asymptotic} ratio can be extracted from~\eqref{eq:compAlgo},
provided that the set of evaluation points is statistically diverse. One gets:
\begin{equation}\label{eq:asymptGain}
\frac{k_{\A}}{k_{H}} \simeq 
\frac{\sqrt{d}} {1.9046 \sqrt{p+\log_2 d+\buffpo}}\gg1 \,.
\end{equation}
The corresponding \emph{asymptotic gain factor} is illustrated on Figure~\ref{fig:HornerFPETheory}.

\medskip
Secondly, using Remark~\ref{rmk:cancel}, it is also very easy to \emph{detect cancelations} of leading bits, which means that
the \A{} algorithm allows not only faster computations, but also provides a hint at runtime on the precision that should
be used to achieve a certain level of accuracy (typically, the desired accuracy plus the number of canceled bits).
Running error bounds (\ie estimates of the absolute error committed during the evaluation process)
are also available for H\"orner~\cite{HIG02}; however they do not directly indicate the number of leading bits that where canceled,
contrary to \A{}, which can compare the scale of the largest monomial to the final result at no extra cost (see Figure~\ref{fig:estimPrecSphere}).

\bigskip
\begin{figure}[H]
\captionsetup{width=.95\linewidth}
\begin{center}
\includegraphics[width=.49\textwidth]{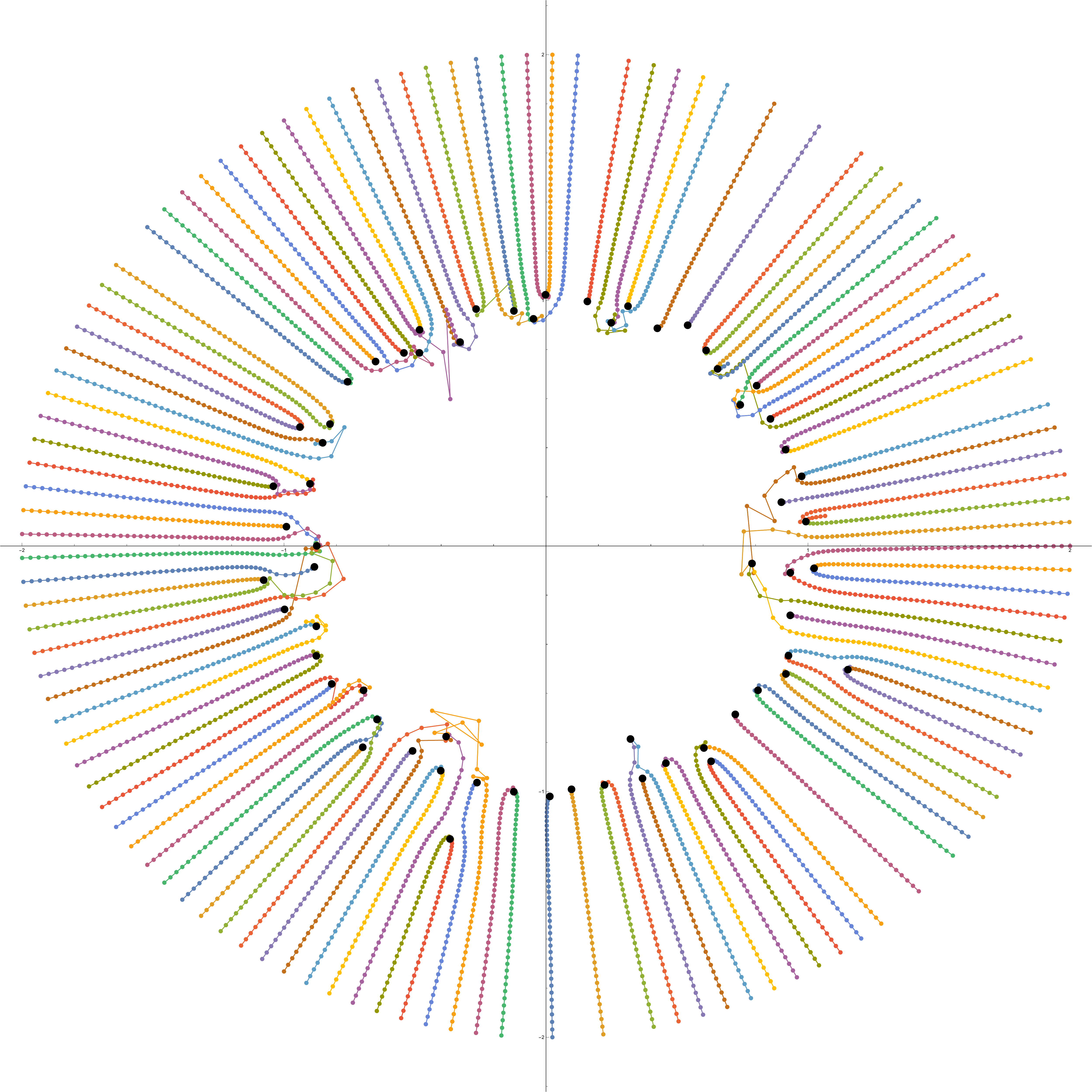}\hfill\includegraphics[width=.495\textwidth]{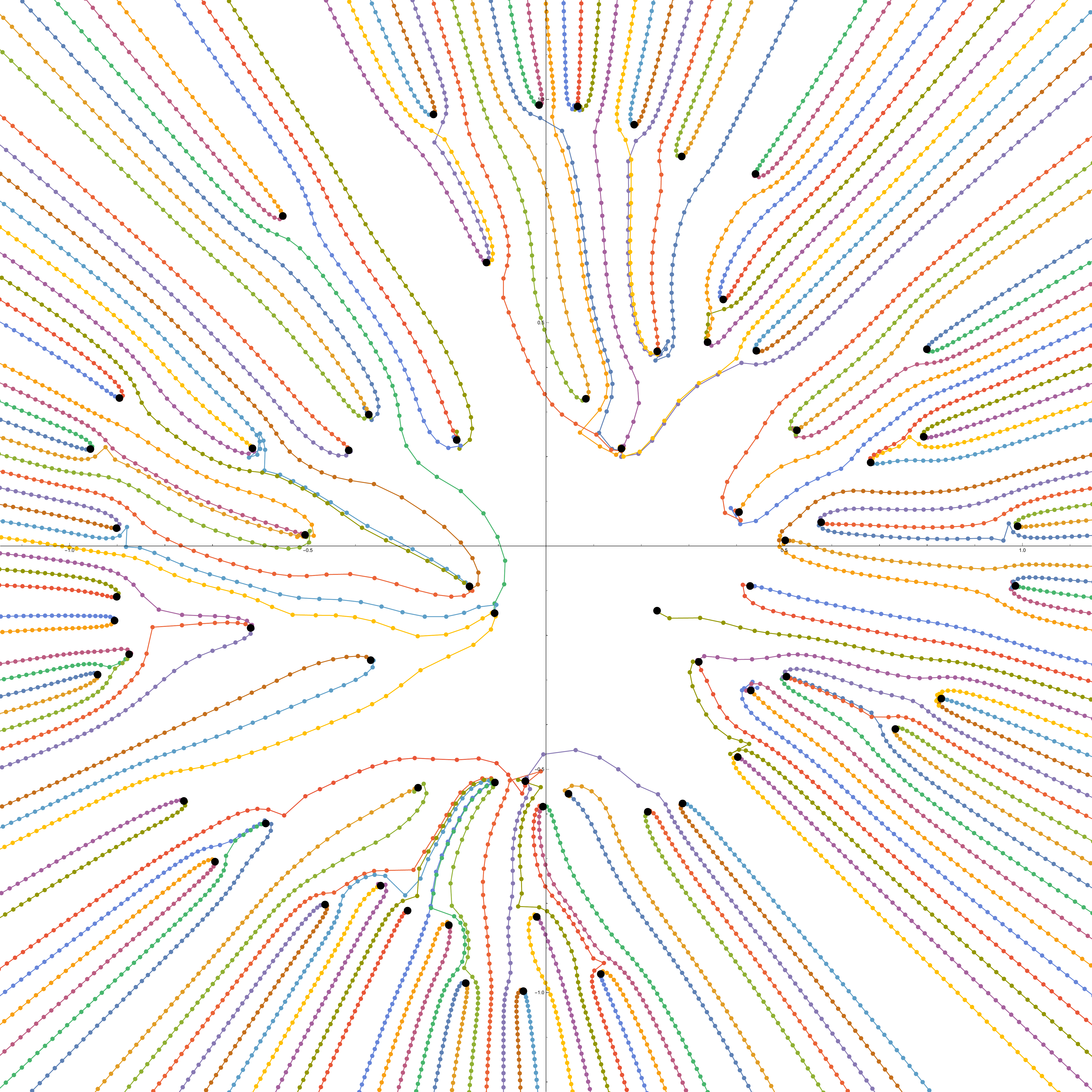}
\caption{\label{fig:newton}
Examples of root-finding with Newton's method for polynomials of degree 65 with random Gaussian coefficients (left)
or uniformly distributed roots on the disk (right).
The starting points are uniformly distributed on the circle of radius 2 and each step is computed with \texttt{FastPolyEval -evalN}
and a precision set to $p=100$ bits. Only the trajectories that avoid critical points (no far jumps) and that have ultimately converged
are shown.
}
\end{center}
\end{figure}

A typical application that takes advantage of these two properties is finding roots with Newton's method. Given a starting point $z_0\in\C$,
one computes the sequence
\begin{equation}\label{eq:newton}
z_{n+1} = N_P(z_n) \qquad\text{with}\qquad N_P(z) = z-\frac{P(z)}{P'(z)}\,\cdotp
\end{equation}
Almost surely, the sequence will converge towards a root of~$P$;
divergence occurs when $z_0$ is in the Julia set of $N_P$ (which is of Hausdorff dimension $<2$, see \cite{MILNOR}, \cite{CARLESON}).
Costly excursions near $\infty$ occur also if the sequence visits a small neighborhood of a critical point.
Using enough starting points (see the algorithm described in \cite{HSS2001}),
one can compute all the roots of $P$.
We refer the reader to our work~\cite{MV21} for a refinement of~\cite{HSS2001}
that allowed us to split a tera-polynomial,~\ie$\deg P = 2^{40} \simeq 10^{12}$ using a set of carefully chosen starting points
for Newton's method.
Here, we focus on the simpler task of showing the benefits of applying~\A{} to compute~\eqref{eq:newton}
instead of H\"orner's scheme.

\medskip
The first  benefit is that the preconditioning of $P$ and $P'$ can be done \emph{simultaneously}. Indeed,
if~$a_j$ are the coefficients of~$P$, then those of the derivatives satisfy:
\[
s(j a_j) = s(j) + s(a_j) + \{-1,0\} = \lfloor \log_2 j \rfloor + s(a_j) + \{0,1\} \,.
\]
In a first approximation, the maps $\ep$ and $\wideparen{E}_{P'}$ are thus simply
offset from one another by the concave map~$j\mapsto \log_2 j$. 
In practice however, $\wideparen{E}_{P'}$ may have more segments than $\ep$.
If speed is of the essence, one can choose to keep a low-resolution profile $\wideparen{E}_{P'}$ and
increase the safety margin $\delta$ (our implementation choice for the Newton demonstrator).
Alternatively, one could perform a separate preprocessing for $P$ and $P'$.

\medskip
The second and main key point is that the computation of $P(z)/P'(z)$ can be largely improved if one takes
into account the \emph{cancelation of valuation}$^\ast$\footnote{$^\ast$ The valuation of a polynomial
is the lowest degree of its non-zero monomials, \ie the multiplicity of zero as a root.} induced by~\A{}. Precisely, if one assumes that
\[
Q_1(z) = \sum_{k\in \ii{\ell}{r}\cap G_p} a_j z^j
\quad\text{and}\quad
Q_2(z) = \sum_{k\in \ii{\ell'}{r'}\cap G_p'} ja_j z^{j-1}
\]
are the respective $p$-bit reductions of $P(z)$ and $P'(z)$, then
\[
\frac{P(z)}{P'(z)} \approx \frac{Q_1(z)}{Q_2(z)} = \frac{ \sum_{k\in \ii{\ell}{r}\cap G_p} a_j z^{j-m} }{\sum_{k\in \ii{\ell'}{r'}\cap G_p'} ja_j z^{j-1-m}} \,,
\]
where $m=\min\{\ell, \ell'\}$. Taking the simplification of $z^m$ into account improves both the speed and the accuracy.
It is especially important in the early phase of Newton's sequence, where $P(z)$ and $P'(z)$ may still be huge,
which would cause a substantial loss of precision in the computation of the increment, or even an overflow. In~\cite{MV21}, we encounter
examples where neither $P(z)$ nor $P'(z)$ can be represented accurately with the precision chosen, but where~$P(z)/P'(z)$
can be computed flawlessly.

For example, with $P(z)=z^{64}+1$ and $p=24$ bits, the evaluation of $P(10)$ and $P'(10)$ with our \texttt{FP32}
implementation produces \texttt{inf} because of the obvious overflow. However, we can compute
the correct Newton increment $10-N_P(10) \simeq 0.15625$ with \texttt{FP32} hardware arithmetic,
which is actually an accurate value up to $2\times 10^{-65}$.

\medskip
The third point in favor of the \A{} algorithm occurs when the sequence~$z_n$ eventually approaches a root of~$P$, as it should; the
computation of $P(z_n)$ then leads to an increasing number of \emph{cancelations}. Using Remark~\ref{rmk:cancel},
we can easily issue a warning when it is time to switch the computations to a higher precision.

\medskip
The last point is that \A{} is \emph{embarrassingly parallel}, which means that multiple roots can be searched for
simultaneously on different cores using the method of~\cite{HSS2001}. Also, contrary to more global algorithms that can be influenced
negatively if some of the evaluation points lead to overflow values (\eg if $z_n$ is a near miss of a root of $P'$), each
computation with \A{} is carried out independently of the others, even on a single core. 

\medskip
An example of root-finding using our implementation is illustrated in Figure~\ref{fig:newton}.

\subsection{Perspectives}

Quadrature methods are at the heart of numerical analysis \cite{BERMAD}, \cite{SSD04}.
Using \emph{Gaussian quadrature}, one may use $n$ evaluations to compute exactly the integral
of a polynomial of degree $2n-1$ over a given interval. The evaluation points (and the
weights of the linear combination) are determined by orthogonal polynomials.
The \A{} algorithm can be used to speed up the evaluations without compromising precision
in the case of high-degree polynomials (typically $n\gg 100$).

\medskip
Clenshaw's algorithm~\cite{CL1955} generalizes H\"orner's method in order to \emph{evaluate recursively} linear combinations
of a polynomial basis, which is itself defined by a three-term recurrence relation.
The principal of lazy addition at the heart of the~\A{} algorithm could be used in this general context
to reduce finite precision computations to a parsimonious summation. The practical condition
is the ability to compute easily the scale of the basis functions at a given point (like $\log_2 z^k=k\log_2 z$).

\medskip
Extending the \A{} algorithm to the \emph{multivariate case} would be a welcome generalization
because the number of terms increases drastically. There are
$\frac{(d+n-1)!}{d! (n-1)!}$ monomials of total degree $d$ in $n$ variables, \ie $O(d^{n-1})$ if $d\gg n$.
For example, a polynomial of degree 68 in 4 variables contains more than a million monomials, which is an instance of the
well known curse of the dimension.
For a recent study of the error estimates in the multivariate H\"orner algorithm, we refer the reader to~\cite{PS2000}.

The key idea of the \A{} algorithm (namely the lazy addition) is independent of the dimension.
The transfer of the analysis of the dominant coefficients to an arbitrary evaluation point~$(x_1,\ldots,x_n)\in\C^n$ remains similar 
to the~1D case~\eqref{equ:shear}:
\[
\log(|a_{k_1,\ldots,k_n} x_1^{k_1}\ldots x_n^{k_n}|) = \log|a_{k_1,\ldots,k_n}| + \sum_{j=1}^n k_j \log|x_j| \,.
\]
The main question will be to estimate the average complexity of the~\A{} algorithm, which is essentially equivalent to the question
of computing the average area of the horizontal projection of the largest hyperplane wafer that can be sandwiched between
two copies, vertically offset by $\delta$, of the graph of a concave function. A preliminary numerical exploration with
a half-sphere function, \ie $f(x_1,\ldots,x_n)=\sqrt{1-|x|^2}$, confirms that the area does scale as $\delta^{n/2}$ for $n=1,2,3$
when $\delta\to0$, which is encouraging.

\medskip
Finally, let us mention that the \A{} algorithm is of interest when evaluating polynomials or \emph{analytic functions
on a disk}. Remark~\ref{rmk:case_disk} gives the appropriate weight to compute the average complexity
when $z$ is chosen at random uniformly on a disk. In Section~\ref{sec:bench}, this case is benchmarked,
along with the Riemann sphere and the real line.

\section{Implementation and benchmarks}
\label{sect:code}
We have implemented our algorithm in the \texttt{C} language
and we release the implementation as an open-source project~\cite{FPELib}.
Our implementation aims for the highest versatility and user-friendliness, without compromising performance.
As a general rule, special cases that can lead to a substantial optimization are automatically recognized and dealt with.

\subsection{General considerations}

The main function, \texttt{FastPolyEval}, is called at the command line.
Polynomials are specified as \texttt{CSV} files (passed as arguments) in which
each coefficient, starting with~$a_0$, is written as a pair of its real and imaginary part in decimal form.
For example, the polynomial $P(z)=2+(3-5i)z$ is represented by the listing
\begin{verbatim}
2, 0
3, -5
\end{verbatim}
Similarly, the set of evaluation points is specified as a \texttt{CSV} file that obeys the same format.

\medskip
The first argument is systematically the precision at which the result of the operation is desired.
If the requested precision is at most 24, 53 or 64 (depending on compile time options),
\texttt{FastPolyEval} uses machine floating numbers, respectively \texttt{FP32}, \texttt{FP64} or \texttt{FP80}.
Otherwise, arbitrary-precision MPFR floating numbers~\cite{MPFR} are used.
It is therefore possible to store the values of a polynomial with a high precision in a file
and only use machine precision in a first set of low-precision evaluations, without worrying about a performance loss.
On the contrary, if the precision requested exceeds that of the input, the input is considered exact (in decimal form)
and padded with zero trailing bits if necessary.

\medskip
\texttt{FastPolyEval} automatically identifies the case of real polynomials (all imaginary parts of coefficients are identically zero)
because one can preprocess this case faster. Similarly, evaluations along the real line are also silently optimized
by the evaluator.
Computing the scale of a real number is indeed about twice as fast as computing the scale of a complex number.
In all cases, when using high-precision numbers, the scale, which is integer valued, is computed efficiently using only machine precision.

\medskip
Our implementation of the \A{} algorithm is complemented by a set of tasks that can generate
polynomials (interpolation from a given set of roots, four common orthogonal families, the family of polynomials
associated with the hyperbolic centers of the Mandelbrot set) and to manipulate them (sum, products, derivatives).
Rescaling can be done by evaluating $\lambda z$ on the coefficients. We also provide a comprehensive
set of tools to build and operate on sets of complex numbers. See Appendix~\ref{FPETasks}.

\medskip
The tasks \texttt{-eval}, \texttt{-evalD} and \texttt{-evalN} can be used directly in production cases to evaluate
a polynomial, its derivative or one Newton step with the \A{} algorithm.
An optional argument can be passed to generate a report on the number of bits that can reasonably be trusted
in each evaluation, in accordance with Remark~\ref{rmk:cancel}.
Additional arguments enable the benchmark mode (timing, comparison with H\"orner).
One important optional argument is the \texttt{errorsFile} specification, that generate a complementary report
on the estimated quality of the evaluation at the given precision (see Remarks~\ref{rmk:cancel} and~\ref{rmk:HornerAndBitLoss}).
For each evaluation point, it contains an upper bound
for the evaluation errors (in bits), a conservative estimate on the number of correct bits of the result,
and the number of terms that where kept by the~\A{} algorithm.

\medskip
The \texttt{-iterN} task is for the convenience of the user and provides a reasonably optimized stopping criterion
for Newton's method. For best results, we recommend multiple runs, each with a limited number of iterations, and where the precision is gradually increased.
The choice of the starting point and the pruning of duplicates
is left to the end-user; see~\cite{HSS2001} for guidance.
For a complete implementation of a splitting algorithm, we refer the reader to~\cite{MV21}.

\medskip
The \texttt{-analyse} task computes the concave cover $\ep$, the strip $G_p$,
and the intervals of $|z|$ for which the evaluation strategy (\ie the reduced
polynomial $Q_\lambda(z)$) changes. It is intended mostly for an illustrative purpose on low degrees, when the internals of the \A{}
algorithm can still be checked by hand. However, the intervals where a parsimonious representation is valid may also be of practical use;
see Section~\ref{par:parsimonious}.

\medskip
The question of parallelization is a legitimate one if one wishes to get the most out of modern hardware.
If the number of evaluation points is high compared to the core count,  the algorithm~\A{}
is embarrassingly parallel. Further optimization could be achieved by performing
evaluations at points of similar size on the same core.
To avoid an excessive complexity of the code that may only be of use in some specialized
application, we chose to only implement a single-core version of \A{}.

\subsection{Implementation notes}

The fact that $E_P$, defined by~\eqref{def:EP}, is discrete valued helps build a concave cover $\ep$ with few segments
(see Section~\ref{sect:preCond}), which in turn speeds up
the binary searches for $k_\lambda$, $\ell$ and $r$ in the evaluation phase.
Note that even if $\log_2(|a_k|)$ is concave, the scale function is integer valued, which, in practice, may prevent $E_P$
from being concave. See Figure~\ref{fig:discreteCover}.

In the course of sorting the values $s(a_k)$, we could check whether the profile is concave and, if it is indeed concave,
we could identify the maximum in an overall of $2d$ operations and reduce the complexity of the preconditioning to
only~$O(d)$ operations. However, in general, it induces a loss in the evaluator (more segments in $\ep$) and it is not worth
the trouble. Similarly, using a non-integer scale would be ill advised. 

\begin{figure}[H]
\begin{center}
\includegraphics[width=.9\textwidth]{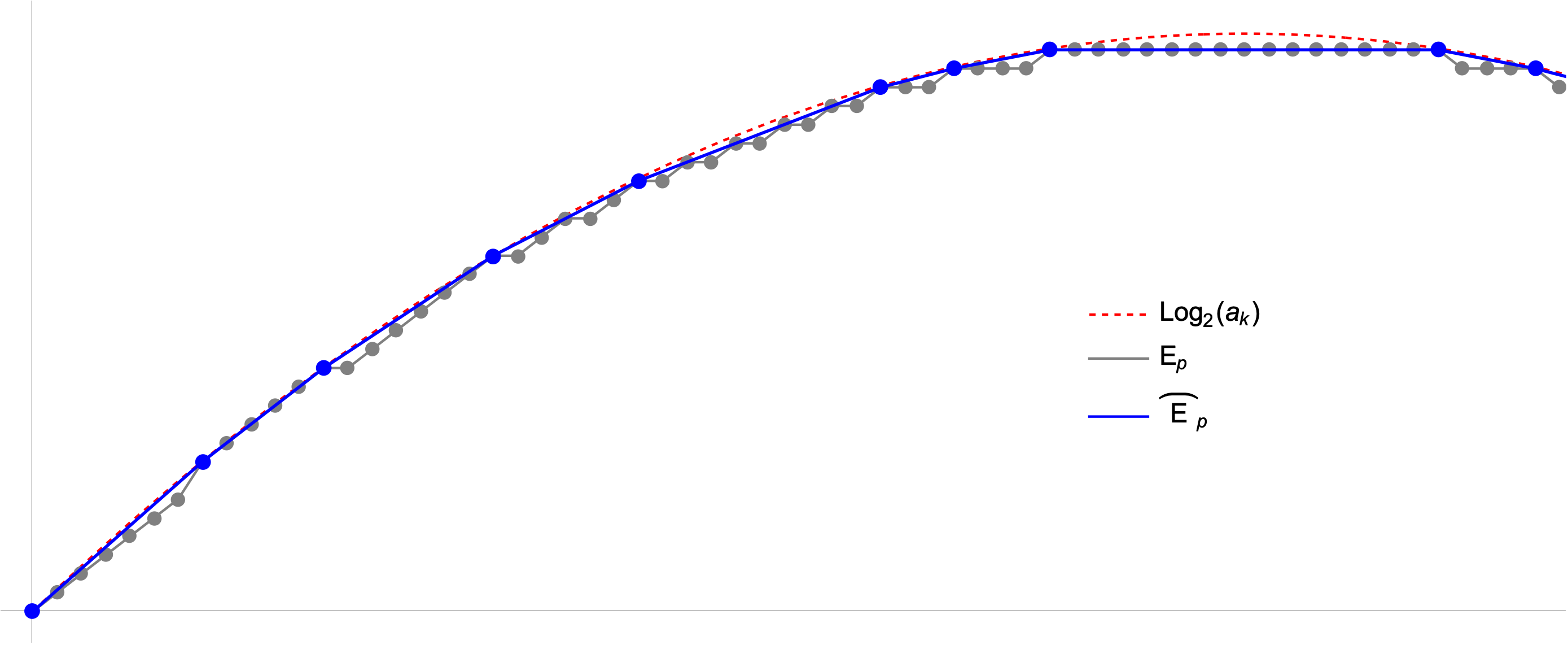}
\caption{\label{fig:discreteCover}
Even if $\log_2|a_k|$ is concave, the scale function $E_P$ is ``pixelated'' and not necessarily concave.
It is a good thing because $\ep$ contains fewer segments, which optimizes the evaluator.
}
\end{center}
\end{figure}

At the end of each evaluation, one needs to compute the valuation monomial, \ie$z^\ell$. The canonical method
consists in writing $\ell = 2^{\alpha} + \beta$ with $\beta<2^{\alpha}$.
As $z^{2^\alpha}$ can be computed with $\alpha$ successive squaring that can be kept in memory to compute $z^\beta$,
the evaluation costs at most $2\log_2 \ell$ multiplications.

In the preconditioning phase, we mark a set of indices $B_p \subseteq \ii{0}{d}$ that never need to be computed
for the given precision $p$ (see below~\eqref{eq:threshold}).
If their density is close to one in $\ii{\ell}{r}$, which is the case for sparse polynomials, then, to evaluate $z^\ell$,
we  pre-compute other powers of $z$ than the canonical $z^{2^j}$ with $j \leq \log_2 \ell$.
For example, the gaps can be filled more efficiently and be dynamically optimized for the interval $\ii{\ell}{r}$,
with a negligible overhead. This remark is implemented in \texttt{FastPolyEval}, which ensures an equal
treatment of all possible types of lacunarity, be it regular or not.

\subsection{Benchmarks}\label{sec:bench}

We have tested the correctness and efficiency of our implementation on
\emph{a few classes of polynomials} that are either of large interest, hard to handle in general, or both.

The \textit{Chebychev}, \textit{Legendre}, \textit{Laguerre} and \textit{Hermite polynomials} are classic. The \textit{hyperbolic polynomials} play a central
role in the study of the Mandelbrot set and are defined recursively by
$p_1(z)=z$ and  $p_{n+1}(z) = p_n(z)^2+z$.
\textit{Normal polynomials} are of the form $P_\omega(z) = \sum a_n(\omega) z^n$ where
$a_n(\omega)\sim\mathcal{N}(0,1)$ are either real- or complex-valued random variables
following normal law. The so-called \textit{half-circle} family is defined by~\eqref{eq:halfcirclepoly}; the
coefficients are real valued and form a half-circle when drawn in logarithmic coordinates.
The complex version of the half-circle family is obtained by multiplying the coefficients 
of the previous family by a random phase uniformly distributed on the unit circle.
As explained in Section~\ref{par:optimal}, these polynomials are remarkably hard  to evaluate accurately.

\bigskip
Systematic benchmarks were performed on \emph{Romeo}, in the HPC center of the University of Reims.
The overall benchmark time depends on the family of polynomials and, obviously, on the degrees and precisions used;
in our case, it took a total CPU time of 70-90h per family.
Multiple identical runs (typically two consecutive runs of the H\"orner algorithm, and ten runs of the~\A{} algorithm) ensured
that the average time is not biased by the loading time of a library or by fluctuations in the ambient load of the server.
We performed complementary benchmarks on our personal computers to confirm
the data points for the smaller degrees.

\bigskip
As explained in Section~\ref{sect:eval}, we compute the average complexity when the
evaluation points are chosen uniformly on either $\CC$, $\RR$ or the unit disk.
Before going further in our analysis, let us comment on the \emph{number of evaluation points}
used for the benchmarks. 

We used 10\,084 points uniformly distributed on the Riemann sphere
(modulus ranging from $8\times 10^{-3}$ to $2\times 10^2$), 5\,000 points uniformly distributed
on the unit disk (modulus ranging from $4\times 10^{-3}$ to~$1$) and 5\,000 on the real line (ranging from $\pm 6\times 10^{-5}$ to $\pm 2\times 10^3$).
Using more points does not change the average time significantly (see Figure~\ref{fig:compNbPts}), however it can
dramatically and unnecessarily extend the CPU time.

\begin{figure}[H]
\captionsetup{width=\linewidth}
\begin{center}
\includegraphics[width=\textwidth]{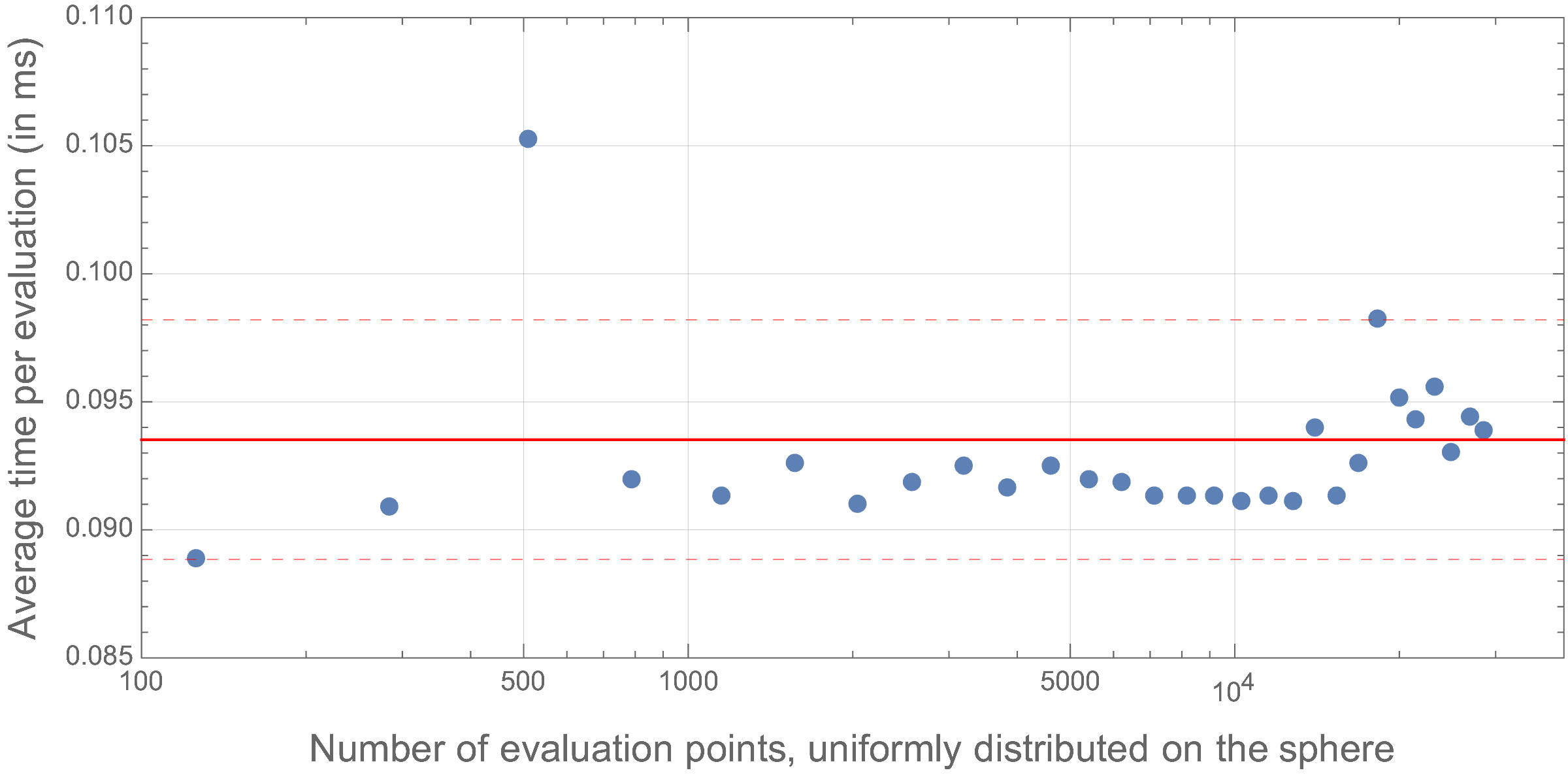}
\caption{\label{fig:compNbPts}
Influence of the number of evaluation points on the Riemann sphere on the average evaluation time with the~\A{} algorithm for
a polynomial of degree 1047 in the half-circle family, using 100 bits of precision. The red line is the mean value, the
dashed lines are the $\pm5\%$ deviations.}
\end{center}
\end{figure}

An \emph{order of magnitude} of the computation time is given in Figure~\ref{fig:compType}.
On a modern laptop${}^\ast$\footnote{${}^\ast$ MacBook Pro 2018, Intel Core i7, 2.6GHz, 16G RAM.}\!,
the evaluation of a polynomial of degree 1024 with a precision of 100 bits with
H\"orner's method takes in the ballpark of~$143\mu s$ $\pm23\%$${}^{\ast\ast}$.
\footnotetext{${}^{\ast\ast}$ Value obtained as average of four benchmarks on $\CC$, one on the unit disk and one on $\RR$,
amounting to 60 FPE preprocessings, 503\,360 FPE evaluations, and 100\,672 H\"orner evaluations for each of the 9 polynomial
families mentioned in Figure~\ref{fig:compType}.}  With the~\A{} algorithm,
the computation time depends significantly on the shape of $E_P$.
It can affect the preprocessing time negatively if $\ep$ has many segments. Conversely, it can affect  the evaluation time
positively if $\ep$ has few segments (ideally, with radically different slopes) or if $G_p$ contains a small number of terms (see Section~\ref{par:algo}
for the definition of $G_p$).
The order of magnitude of the preprocessing step is~$84\mu s$ $\pm32\%$ and subsequent evaluations with~\A{} boil down
to $46\mu s$ $\pm62\%$.

\medskip
For a \emph{one time evaluation}, the balance tilts slightly in favor of~\A{},
which is an interesting practical update on the optimality of the H\"orner scheme. Note that our experiment does
not contradict the theoretical result of Ostrowski~\cite{OST54} and Pan~\cite{PAN66} because our advantage holds on average
and only for computations with a fixed precision.

For a one time evaluation, \A{} outperforms H\"orner when $M(p)\gg \log d$. In the preprocessing
phase, we only need to read the exponents of the coefficients, which remains a small amount
of data to handle ($O(d\log d)$ with a small fixed constant). The evaluator then performs a minimal
number of costly high-precision operations.
H\"orner on the other hand, has $O(d M(p))$ bit-operations to perform and may end up being slower.
The advantage is especially pronounced in the complex
case, where each numerical product costs 4 real multiplications.

\begin{figure}[H]
\captionsetup{width=\linewidth}
\begin{center}{\small
\begin{tabular}{|l|c|cc|cc||cc||cc|}\cline{2-10}
\multicolumn{1}{c|}{} & \multicolumn{5}{c||}{Evaluation on $\CC$\rule[-6pt]{0pt}{18pt}}
& \multicolumn{2}{c||}{on $D(0,1)$} & \multicolumn{2}{c|}{on $\RR$}\\\hline
\multirow{2}{*}{Family} & \multicolumn{3}{c|}{Average time (in ms)} & \multicolumn{2}{c||}{Gain}
& \multicolumn{2}{c||}{Gain} & \multicolumn{2}{c|}{Gain}\\
&  H\"orner & Preproc. & FPE &  sing. & asym. &  sing. & asym. &  sing. & asym.\\\hline
Half-circle $\C$ & 0.176 & 0.137 & 0.090 & $\times$0.8 & $\times$2.0 & $\times$0.9 & $\times$1.7 & $\times$0.8 & $\times$2.8 \\\hline
Half-circle $\R$ & 0.149 & 0.076 & 0.072 & $\times$1.0 & $\times$2.1 & $\times$0.9 & $\times$1.6 & $\times$0.7 & $\times$3.4 \\\hline
Hyperbolic 	& 0.152 & 0.084 & 0.069 & $\times$1.0 & $\times$2.2 & $\times$0.9 & $\times$1.7 & $\times$0.6 & $\times$3.2 \\\hline
Normal $\C$	& 0.174 & 0.100 & 0.055 & $\times$1.1 & $\times$3.2 & $\times$1.0 & $\times$2.2 & $\times$0.9 & $\times$5.0 \\\hline
Normal $\R$	& 0.160 & 0.049 & 0.076 & $\times$1.3 & $\times$3.3 & $\times$1.2 & $\times$2.4 & $\times$0.8 & $\times$5.8 \\\hline
Chebychev 	& 0.152 & 0.057 & 0.036 & $\times$1.7 & $\times$4.3 & $\times$1.6 & $\times$3.4 & $\times$0.9 & $\times$4.6 \\\hline
Legendre	 	& 0.163 & 0.061 & 0.036 & $\times$1.7 & $\times$4.5 & $\times$1.5 & $\times$3.5 & $\times$0.9 & $\times$4.7 \\\hline
Laguerre 		& 0.140 & 0.082 & 0.017 & $\times$1.4 & $\times$8.3 & $\times$1.5 & $\times$10.0 & $\times$0.8 & $\times$10.9 \\\hline
Hermite 		& 0.141 & 0.059 & 0.015 & $\times$1.9 & $\times$9.3 & $\times$2.3 & $\times$13.6 & $\times$1.0 & $\times$8.1 \\\hline
\end{tabular}}
\caption{\label{fig:compType}
Average computation time on the Riemann sphere for polynomials of degree~1024 for various polynomial families,
using 100 bits of precision on a modern laptop.
The gain refers to the average benefit in computation time that can be expected from switching from H\"orner to the~\A{} algorithm,
either in a single evaluation or asymptotically, if the number of evaluation points is large.
The last four columns give the average gain if the evaluation points are chosen instead on the unit disk or along the real line.
}
\end{center}
\end{figure}

\begin{figure}[H]
\captionsetup{width=.95\linewidth}
\begin{center}
\includegraphics[width=\textwidth]{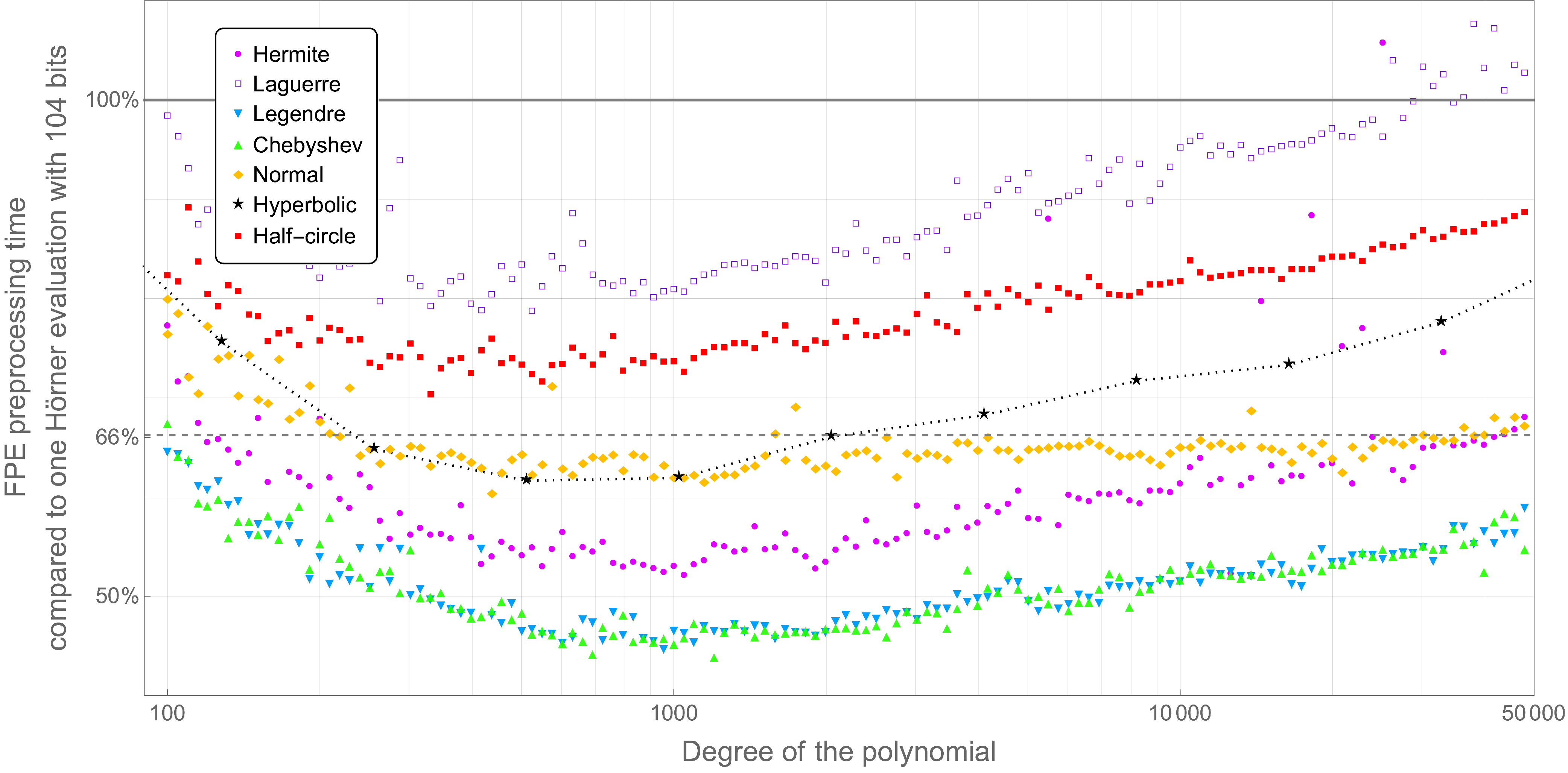}
\caption{\label{fig:preproc}
The FPE preprocessing time, which is independent of the precision,
represents, on average, only~66\% of one typical H\"orner evaluation with 100 bits on $\CC$.
Benchmark data generated on \textit{Romeo} (HPC center of the University of Reims).
}
\end{center}
\end{figure}

\begin{figure}[H]
\captionsetup{width=.95\linewidth}
\begin{center}
\includegraphics[width=\textwidth]{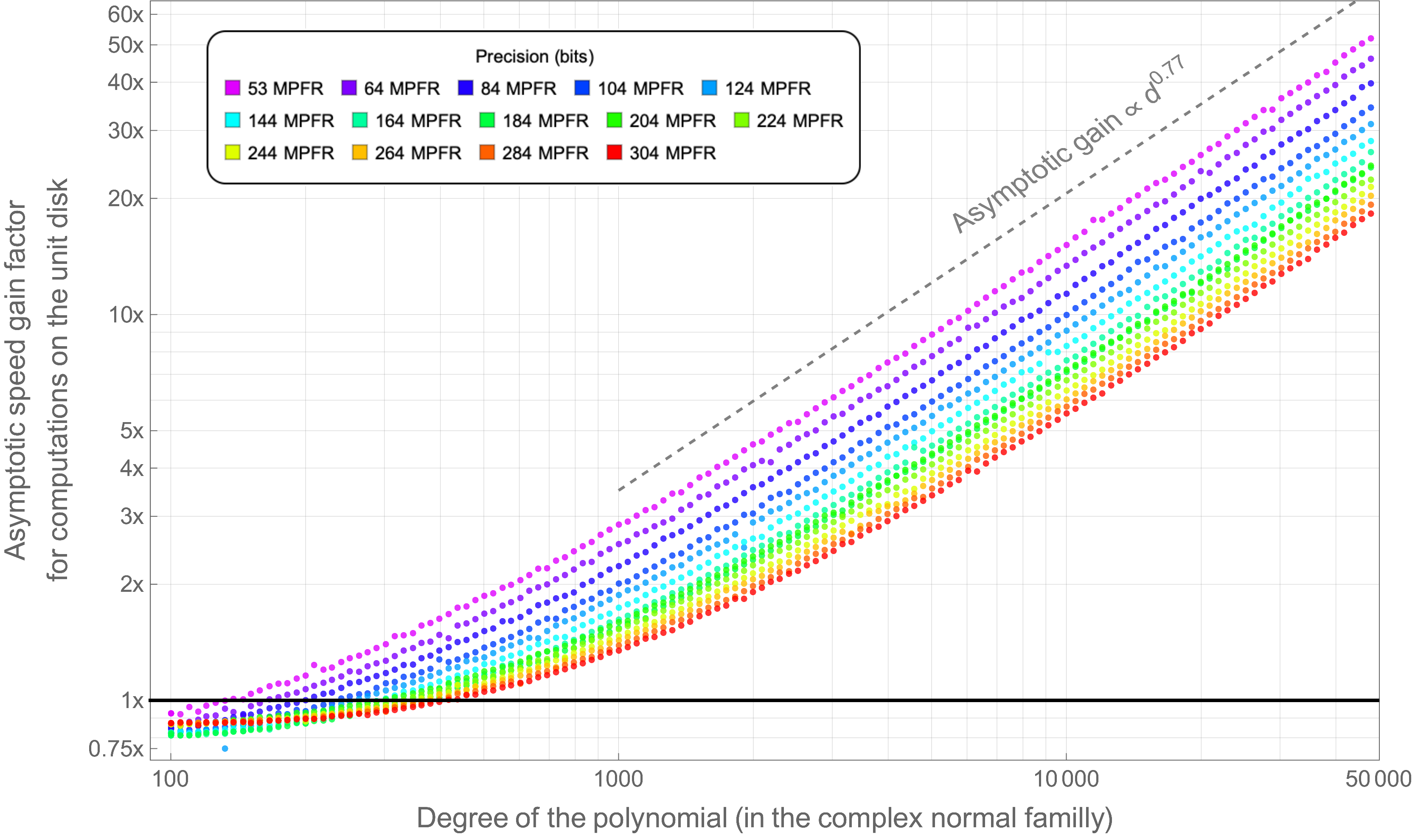}
\caption{\label{fig:diskNormalGain}
Asymptotic speed gain of FPE over H\"orner for evaluations of complex normal polynomials on the unit disk $\{z\in\C\,;\,|z|<1\}$, with various 
high precisions.
Benchmark data generated on \textit{Romeo} (HPC center of the University of Reims).
}
\end{center}
\end{figure}

\begin{figure}[H]
\captionsetup{width=.95\linewidth}
\begin{center}
\includegraphics[width=\textwidth]{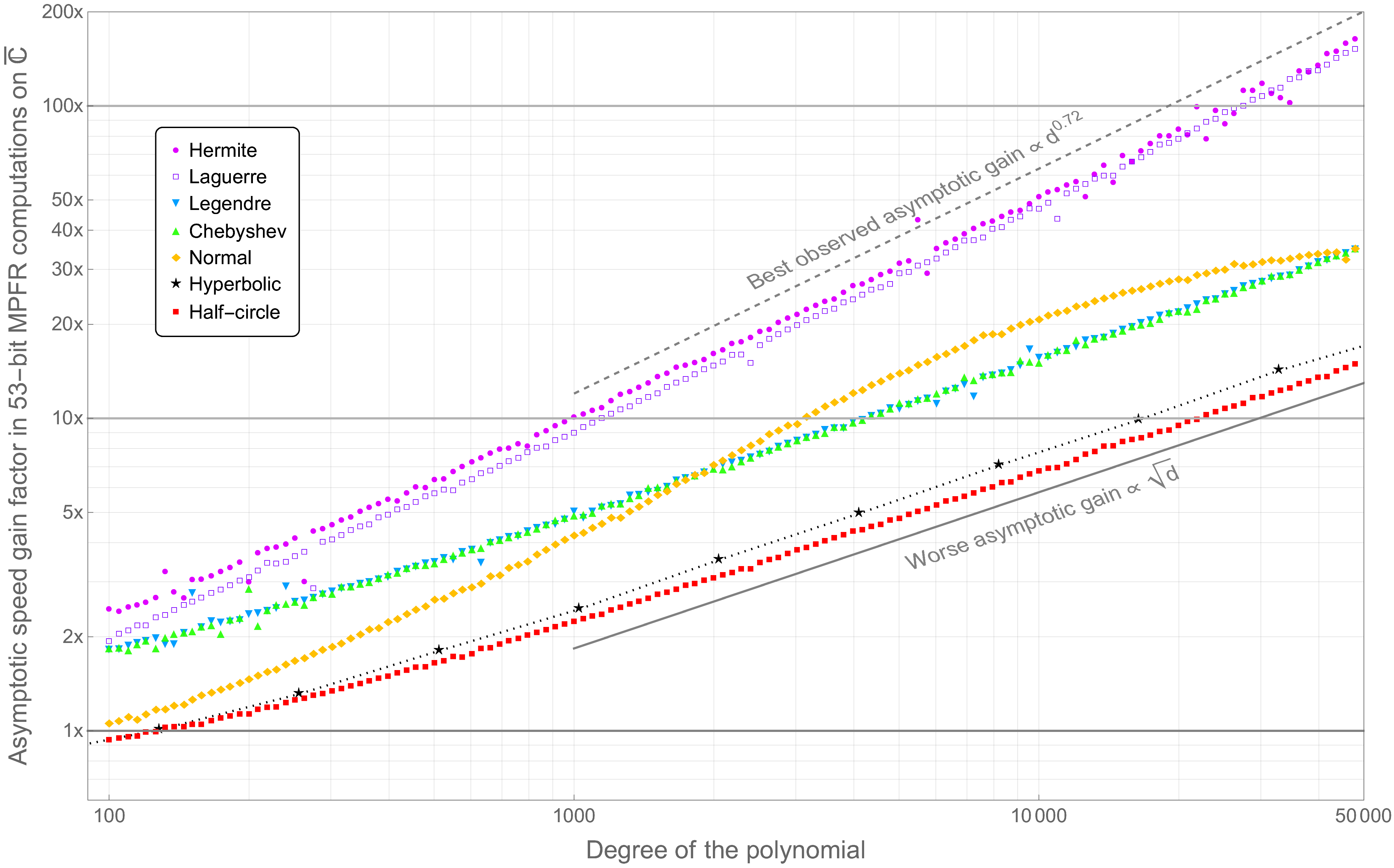}\\[1em]
\includegraphics[width=\textwidth]{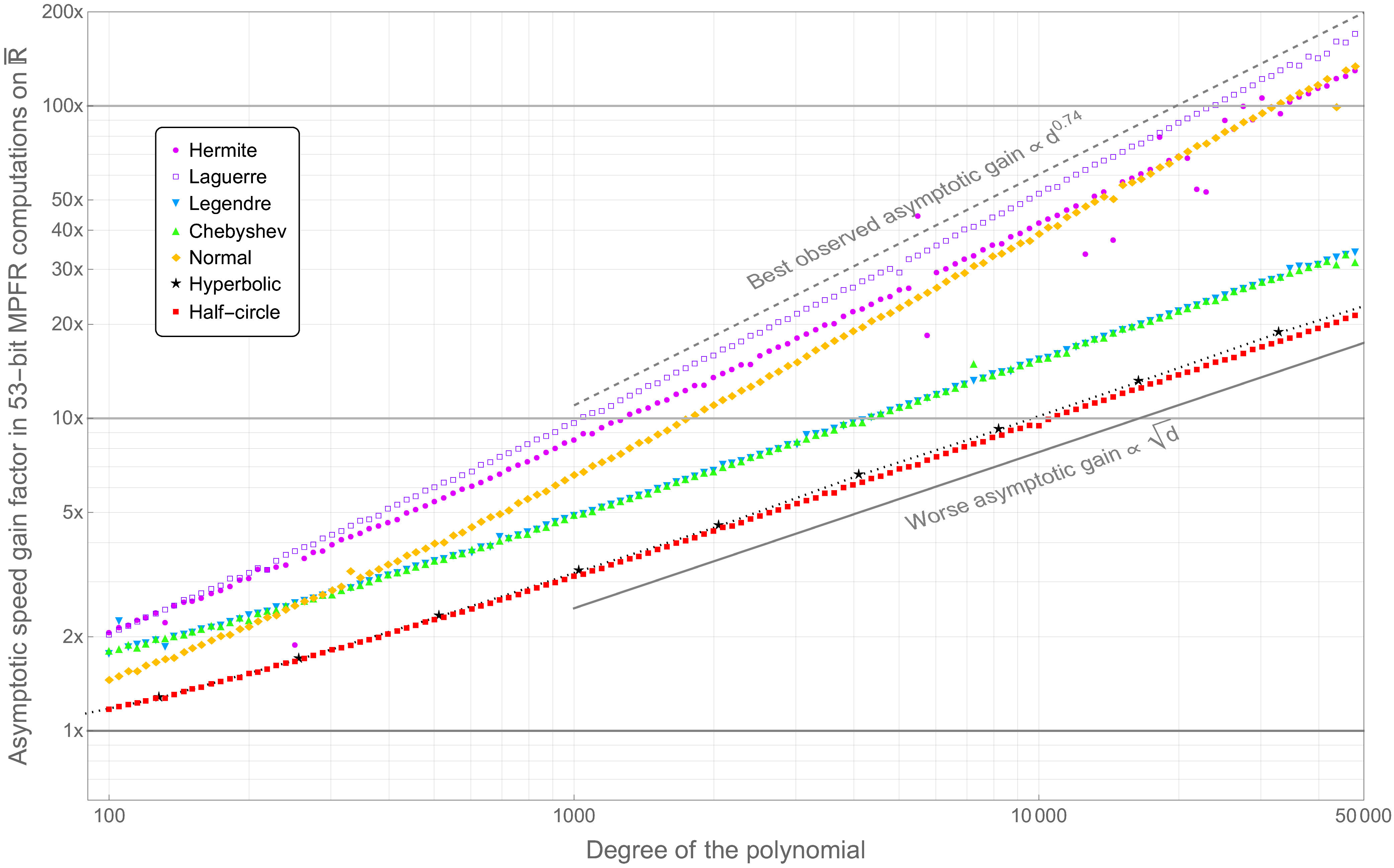}
\caption{\label{fig:compTypeSphLine}
Asymptotic speed gain of \A{} over H\"orner for various polynomial families on the Riemann sphere $\CC$ (top)
and on $\RR$ (bottom) for computations with 53-bit MPFR numbers.
Benchmark data generated on \textit{Romeo} (HPC center of the University of Reims).
}
\end{center}
\end{figure}

If the polynomial is evaluated \emph{repeatedly} (which is what~\A{} is designed for), the preprocessing  overhead becomes negligible
(see Figure~\ref{fig:preproc}) and the \emph{asymptotic gain} obtained by~\A{} becomes substantial.
Speedup in excess of $\times10$ occurs for some families like Hermite or Laguerre for degrees as low as 1000 (see Figure~\ref{fig:compType}).
In accordance with Theorem~\ref{thm:algo}, if the degree is high enough, the speed gain is bounded from below by~$O(\sqrt{d/\log_2 d})$,
which is observed in practice (see Figures~\ref{fig:diskNormalGain} and~\ref{fig:compTypeSphLine}).

In the best cases (Laguerre and Hermite; see Figure~\ref{fig:compTypeSphLine}),
the complexity of the FPE evaluation scales, in practice, as $O(d^{0.26})$ along the real line
and $O(d^{0.28})$ on the Riemann sphere.
In our data range, this complexity is consistent with $O(\log^2 d) \pm 10\%$, which is the theoretical bound suggested
by the last example of Section~\ref{par:algoResults}.
Note that Figure~\ref{fig:diskNormalGain} also hints that, in general,
the exponent of the scaling law of the complexity does not depend on the precision used for the computations.
Finally, let us point out that Chebychev, Legendre and  Hermite polynomials are mildly lacunary (alternately odd or even); the others are not.

\bigskip
Sorting the polynomial families by increasing \emph{asymptotic gain} as in Figure~\ref{fig:compType} and~\ref{fig:compTypeSphLine}
is effectively a way of measuring the complexity
in the variability of the scales of the coefficients. As explained in Section~\ref{par:optimal}, the slowest case is that of the half-circle family.
Similarly, the hyperbolic polynomials are slow to evaluate from their coefficients because of systematic
compensations among monomials on the Mandelbrot set, which represents a substantial part of the Riemann sphere (about 29\%).
On the contrary, if~$\ep$ is composed of only a few segments, there will be very few values of~$|z|$ for which massive compensations among monomials
can occur; in this case, the \A{} algorithm produces a very parsimonious representation of the polynomial (see Section~\ref{par:parsimonious}), which in turn
is responsible for extreme speed gains.

Most polynomial families behave qualitatively the same on $\CC$, $\RR$ and on the unit disk.
The only substantial anomaly in this classification occurs with the normal family (both real and complex), which is asymptotically evaluated significantly faster
on~$\RR$ than on~$\CC$: for a polynomial of degree $30\,000$ and a precision of 53 bits MPFR,
FPE evaluations are asymptotically 100 times faster than H\"orner's on the real line but only 30 times faster on the Riemann sphere (see Figure~\ref{fig:compTypeSphLine}).
A reasonable explanation beyond the fact that real powers are easier to
compute than complex ones, is the fact that the roots of normal polynomials accumulate uniformly
along the unit circle (Hammersley's theorem~\cite{Ham1956}, \cite{SZ2003}); therefore, one may expect fewer cancelations along
the real line than for other families. 
However, the anisotropic example at the end of Section~\ref{par:optimal} suggests caution and
further studies would be required to confirm this explanation. In particular, the reason why the evaluation time of
the normal family on the Riemann sphere fails to obey a power law contrary to all other families is not clear.

\bigskip
Our implementation~\cite{FPELib} handles both \emph{hardware number formats} \texttt{FP32}, \texttt{FP64} or \texttt{FP80}
and arbitrary-precision MPFR floating point numbers~\cite{MPFR}. 
The main limitation of hardware formats is the short range of exponents: roughly speaking, one can only represent numbers whose absolute value lies
between $10^{\pm38}$ with  \texttt{FP32} numbers and between $10^{\pm308}$ with \texttt{FP64} numbers. Concretely, this means it
is simply impossible to compute a monomial $z^n$ in \texttt{FP64} when $|z|\geq 2$ and $n>1024$.
The \texttt{FP80} format provides a slightly more comfortable range between $10^{-4\, 951}$ and $10^{4\, 932}$,
but it is still not enough to handle polynomials of degree 50~000 as in our benchmark.

In the range of exponents where a comparison was possible, hardware numbers behave about 4 times faster than 53-bit MPFR numbers;
however, the gain factor of \A{} over H\"orner obeys the same scaling law as for MPFR. 
In practice, \emph{the sweet spot} for using the \A{} algorithm with \texttt{FP80} numbers is for polynomials of degree 1\,000 to 5\,000 and $|z|<10$.
When using MPFR numbers, this range is extended to essentially any degree above 100 with almost no practical limitation on $|z|$.

\bigskip
The last crucial part in our benchmarks is the question of the \emph{accuracy} of the \A{} algorithm,
which is guaranteed by Theorem~\ref{thm:equivAlgo}.
To put it to the test, we systematically computed a 600-bit evaluation of our polynomials with a H\"orner scheme, which
served as a reference value. For each benchmarked precision (up to 304 bits), the outputs of both H\"orner and \A{}~algorithms with the current precision
were compared to the reference value to identify the absolute computation error.

The most significant data that can be extracted from this computation is the accuracy bias, defined as the difference of the number
of exact bits between the two algorithms, which is presented in Figure~\ref{fig:accuracy}. The practical conclusion is that the values
computed either by \A{} or by H\"orner are essentially identical, up to 1 \textit{exact} bit. Of course, when cancelations occur, the displayed
result may differ by many bits, but the divergence only affects the non significant bits at the end.
 
\bigskip
 \begin{figure}[H]
\captionsetup{width=.95\linewidth}
\begin{center}
\includegraphics[width=\textwidth]{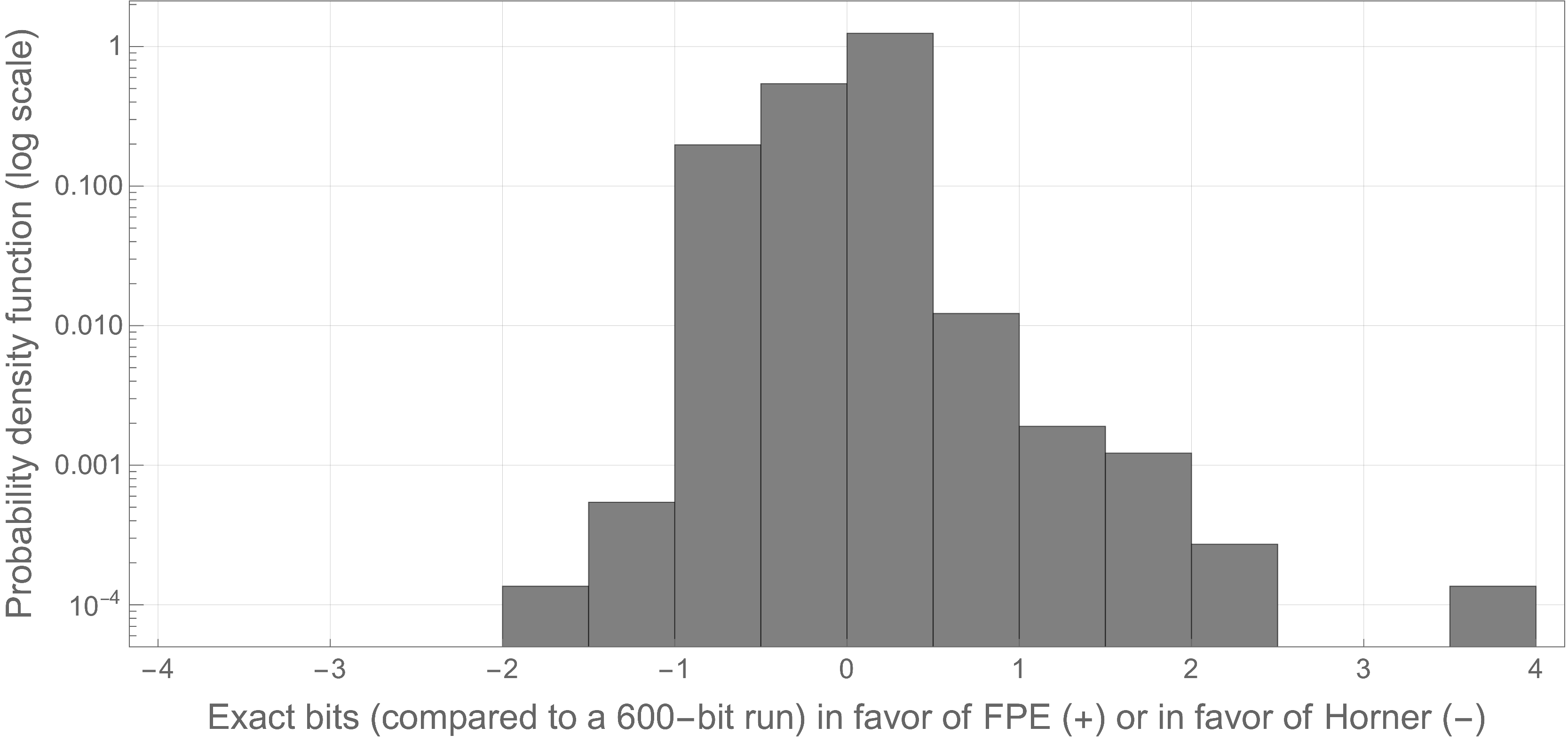}
\caption{\label{fig:accuracy}
No significant accuracy bias can be detected between H\"orner and \A{}.
This histogram is based on 14\,724 data points collected among the different families of polynomials that we have benchmarked
and various precisions, from 53 to 304 bits.
Overall, this represents 102\,910\,668 polynomial evaluations
generated on \textit{Romeo} (HPC center of the University of Reims).
}
\end{center}
\end{figure}

Based on this extensive benchmark, we can now confirm, in practice, that
the~\A{} algorithm holds the promise of Theorems~\ref{thm:equivAlgo} and~\ref{thm:algo} and \emph{performs as accurately as H\"orner,
only faster}.

\medskip 
Let us conclude this section by pointing out that our implementation of~\A{} provides additional tools for \emph{analyzing polynomial evaluations} like
the proportion of leading monomials at a given evaluation point (see Figure~\ref{fig:decimationCircle1000Sph}).
Similarly, the localization of cancelations in the evaluation process can easily be deduced from the output files (see Figure~\ref{fig:estimPrecSphere}),
which may guide practical decisions to ensure the precision of subsequent computations.

\begin{figure}[H]
\captionsetup{width=.95\linewidth}
\begin{center}
\includegraphics[width=.85\textwidth]{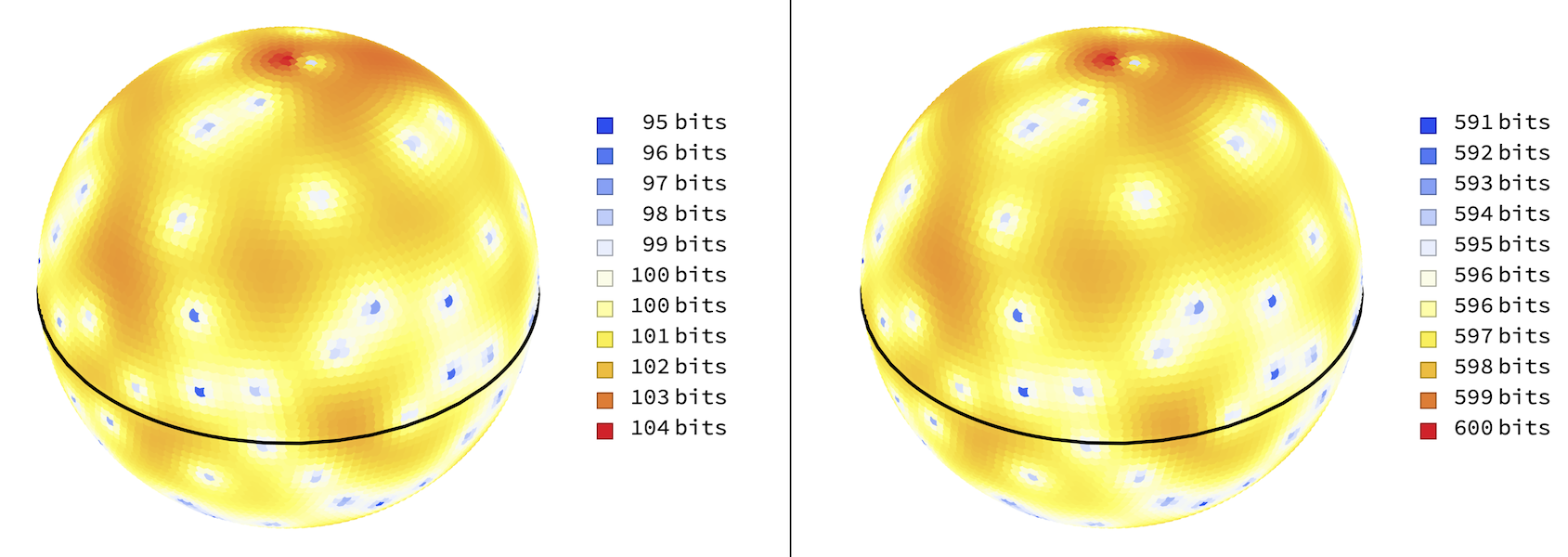}
\caption{\label{fig:estimPrecSphere}
Confidence regions of the \A{} algorithm in the evaluation of a half-circle polynomial~\eqref{eq:halfcirclepoly} of degree 100
along the Riemann sphere. A drop in the number of bits reported correct indicates exceptional cancelations.
As expected, cancelations are a feature mostly independent of the precision used (left 104 bits, right 600 bits).
}
\end{center}
\end{figure}

\medskip
We hope that the ideas presented in this article will inspire future developments, either theoretical or applied.
We also thank the reader for reaching this point.

\newpage
\appendix
\section{Proof of the geometric statements}
\label{par:geom_proof}

In this appendix, we prove the geometric results stated in Section~\ref{sect:analyse}.

\medskip
The proof of Theorem \ref{thm:conc} is based on two complementary geometric constructions
and is split into several lemmas.
In Lemma~\ref{lemma:segment-tangent} we show the existence of a segment~$[A(\tau)B(\tau)]$
in $S(f, \delta)$ of maximal length and of slope $\tau=\tan\theta $.
This segment  touches the graph of~$f-\delta$ in a ``tangent'' way (in the convex sense, \ie as a subderivative). 
\smallskip
In Lemma~\ref{lemma:conc-change-of-var}  we express the main integral from Theorem \ref{thm:conc} in terms of $x_R-x_L$
where $x_L$ and $x_R$ are the respective abscissae of $A(\tau)$ and $B(\tau)$.
Next, we make an alternative geometric construction of $x_R$ and $x_L$ based on area computations.
This second construction is the key to Lemma \ref{lemma:est-diagonal} where
we estimate the diagonal of a square built upon the graph of~$f'$.
A change of variable in the plane (Lemma \ref{lemma:cgt-var45}) allows us to collect all
the prior estimates and leads to the proof of the upper bound in Theorems \ref{thm:conc} and~\ref{thm:conc2}.
Finally, we obtain the lower bounds by constructing explicit examples.

\subsection{First geometric construction based on the graph of $f$}
Let us start by presenting the construction in a simple case. 

\paragraph{A typical example.}
For now, we suppose $f\in C^2([0,1])$ with $f''(x)<0$ for all $x\in (0,1)$
and we single out~$x_0\in (0,1)$.
The line of slope $f'(x_0)$ and passing through the point $(x_0, f(x_0)-\delta)$
is tangent to the graph of $f-\delta$.
This line intersects the graph of $f$ in at most two points $A$ and $B$, one on each side of $x_0$.
Indeed, the points of intersection with the graph of $f$ correspond to the solutions of the equation
\begin{equation}
\label{eq:end-segment}
f(x_0) -\delta + f'(x_0)(x-x_0)=f(x)
\end{equation}
\ie $F(x)=-\delta$, where $F(x)=f(x)-f(x_0) - f'(x_0)(x-x_0)$.
The function $F$ is of class~$C^2([0,1])$ and $F''(x)=f''(x)<0$ for all $x\in (0,1)$,
thus $F$ is concave. Therefore $F'(x)>0$ if $x<x_0$,  $F(x_0)=F'(x_0)=0$ and $F'(x)<0$ if $x>x_0$.
Consequently, there exist at most two points, one on each side of $x_0,$ such that $F(x)=-\delta.$
When they exist, we denote them by $0\leq x_L < x_0< x_R \leq 1.$ When they do not,
we simply take respectively $x_L=0$ or $x_R=1$.
In Figure~\ref{fig:strip}, the common abscissa of the points $A$, $A'$ is $x_L$ while
that of $B$, $B'$ is $x_R$.

\paragraph{General case.}
Let us now come back to the general setting where $f$ is concave, but not necessarily of class~$C^2([0,1])$.
The next statement extends the simpler case presented in the previous paragraph.
It is essentially an elementary version of F.~Riesz's
rising sun lemma~\cite{RieszRisingSun} in a concave setting.

\begin{lemma}\label{lemma:segment-tangent}
For any real number $\tau\in\R$, there exists a unique segment $[A(\tau)B(\tau)]$ of maximal length,
of slope $\tau = \tan \theta $, contained in the strip $S(f, \delta)$ and
that touches the graph of $f-\delta$ in a tangent way in the convex sense.
\end{lemma}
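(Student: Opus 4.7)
My plan is to parametrize lines of slope $\tau$ by their vertical intercept $c$ and reduce the lemma to a one-dimensional optimization on the concave auxiliary function
\[
h(x) \defequal f(x) - \tau x.
\]
The segment of the line $y = \tau x + c$ above an interval $[a,b]\subset[0,1]$ lies in $S(f,\delta)$ precisely when $c \leq h(x) \leq c + \delta$ for all $x\in[a,b]$; equivalently, the oscillation of $h$ on $[a,b]$ must be at most $\delta$, and then $c$ is constrained to the interval $[\max_{[a,b]} h - \delta,\ \min_{[a,b]} h]$. Since the length of the segment equals $\sqrt{1+\tau^2}\,(b-a)$, maximizing the length reduces to maximizing $b - a$ under this oscillation constraint.

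Let $M \defequal \max_{[0,1]} h$, achieved on a closed (possibly degenerate) interval $I^* \subset [0,1]$. The main step is to show that an optimal $[a,b]$ can always be chosen to meet $I^*$. Suppose $[a,b]$ lies strictly to the left of $I^*$; $h$ is then non-decreasing on $[a,b]$. For any $t\in(0,\min I^*-b]$, the points $a+t$ and $b$ lie in the interval $[a,b+t]$ and are convex combinations of its endpoints with matching weights; concavity of $h$ yields $h(a+t)+h(b)\geq h(a)+h(b+t)$, i.e.\ the oscillation on $[a+t,b+t]$ does not exceed the oscillation on $[a,b]$. Taking $t=\min I^*-b$ produces an admissible interval of the same length that meets $I^*$. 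The case $a>\max I^*$ is symmetric.

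Once $[a,b]\cap I^*\neq\emptyset$, we have $\max_{[a,b]} h = M$, so the oscillation condition becomes $h(a),h(b)\geq M-\delta$, i.e.\ $[a,b]\subset\{h\geq M-\delta\}$. Concavity of $h$ forces $\{h\geq M-\delta\}$ to be itself a closed interval, so the longest admissible $[a,b]$ is
\[
[x_L,x_R] \defequal \{h\geq M-\delta\}\cap[0,1].
\]
On this interval $\max h = M$ and $\min h = M-\delta$, so the admissible intercept is forced to be $c=M-\delta$. The maximal segment $[A(\tau)B(\tau)]$ is therefore the uniquely determined portion of the line $y = \tau x + (M-\delta)$ above $[x_L,x_R]$.

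For the tangent contact, pick any $x_0 \in I^*$: then $\tau x_0 + (M-\delta) = \tau x_0 + h(x_0) - \delta = f(x_0)-\delta$, so the segment passes through $(x_0,f(x_0)-\delta)$. Since $x_0$ maximizes $f(x)-\tau x$, the slope $\tau$ belongs to the superdifferential of $f$ at $x_0$; equivalently, $y=\tau x+(M-\delta)$ is a supporting line from above to the concave graph of $f-\delta$ at $(x_0,f(x_0)-\delta)$, which is exactly tangency in the convex sense. I expect the main delicate point to be the translation step, both because it must be carried out without differentiability of $f$ (handled by the single convex-combination inequality above) and because one must check that the boundary cases $x_L=0$ or $x_R=1$ do not leave any ambiguity in the intercept $c$; the explicit formula for $[x_L,x_R]$ takes care of these uniformly.
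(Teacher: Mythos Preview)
Your proof is correct and follows essentially the same architecture as the paper's: reduce to the concave auxiliary function $h(x)=f(x)-\tau x$, identify the optimal segment as the line $y=\tau x+(M-\delta)$ over the superlevel set $\{h\geq M-\delta\}$, and verify tangency at a point of $I^*$. The one technical difference is how you handle segments that miss the maximum set. The paper splits on the intercept $\beta$ and, in the case $\beta<m_\tau-\delta$, uses the monotonicity of $\tau-f'$ and an integral balance $\int(\tau-f')\leq\delta$ to compare lengths. You instead translate the interval rigidly toward $I^*$ and control the oscillation via the single concavity inequality $h(a+t)+h(b)\geq h(a)+h(b+t)$; this is cleaner and avoids invoking a.e.\ differentiability of~$f$.

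One small imprecision: your claim that $\min_{[x_L,x_R]} h = M-\delta$ can fail when $[x_L,x_R]=[0,1]$ and $h>M-\delta$ everywhere; in that edge case several intercepts $c\in[M-\delta,\min h]$ yield a full-length segment. The sentence ``the explicit formula for $[x_L,x_R]$ takes care of these uniformly'' does not actually resolve this. What does resolve it is the tangency requirement in the lemma: only $c=M-\delta$ makes the line meet the graph of $f-\delta$ (any larger $c$ would force $h(x_0)=c+\delta>M$), and you have already checked that this choice is tangent at $x_0\in I^*$. With that correction the argument is complete.
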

\noindent%
We denote by $g(x_0\pm 0)$ or $g(x_0^\pm)$
the sided limits of a function~$g$:
\[
g(x_0\pm 0) \defequal  \lim\limits_{\substack{{x\to x_0}\\ {\pm (x-x_0)>0}}} g(x) \,.
\]
A segment $[AB]$ of slope $\tan \theta $
is said to be \textit{tangent in the convex sense} to the graph of $g\in\F$ if, at any contact point $(x_0,g(x_0))\in[AB]$,
one has $g'(x_0+0)\leq \tan \theta  \leq g'(x_0-0)$. If~$g$ is smooth, then $g'(x_0)=\tan \theta $.
If an endpoint~$x_0\in\{0,1\}$ is a contact point, then the requirement is lightened  respectively
to $\tan \theta  \geq g'(0^+)$ or $\tan \theta  \leq g'(1^-)$.

\medskip
The abscissa of the endpoints of the maximal segment $[A(\tau)B(\tau)]$
will be respectively denoted by $x_L(\tau)$ for the left side and $x_R(\tau)$ for the right side.

\bigskip
\begin{figure}[H]
\captionsetup{width=.95\linewidth}
\begin{center}
\resizebox{.8\linewidth}{!}{%
\setlength{\unitlength}{1cm}
\includegraphics[width=10cm]{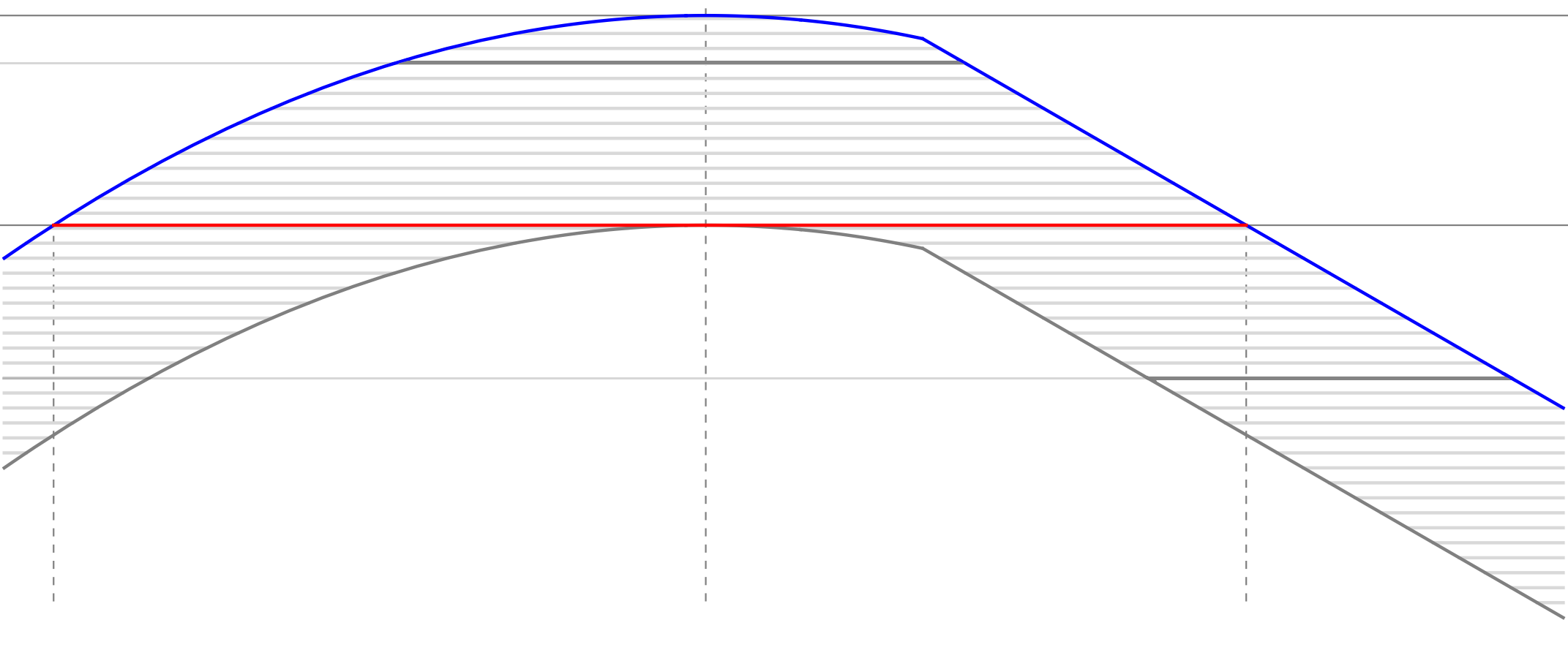}
\begin{picture}(0, 0)(10.1,0)
		\put(3.7, 4.2){\color{blue} $f(x)-\tau x$}
 		\put(3.4,2.85){\color{red}\footnotesize $L(f,\delta,\theta )\cos\theta $}
		\put(0.05,0){\footnotesize $x_L(\tau)$}
		\put(4.33,0){\footnotesize $J_\tau$}
		\put(7.6,0){\footnotesize $x_R(\tau)$}
		\put(-0.5,3.98){\footnotesize $m_\tau$}
		\put(-1.1,2.63){\footnotesize $m_\tau-\delta$}
		\put(-1.4,3.71){\tiny\color{gray} $\beta_1\!\geq\! m_\tau\!-\!\delta$}
		\put(-1.4,1.65){\tiny\color{gray} $\beta_2\!<\! m_\tau\!-\!\delta$}
\end{picture}
}
\caption{\label{fig:risingSun}%
The affine map $(x,y)\mapsto (x,y-\tau x)$ rearranges Figure~\ref{fig:strip} in a so-called
``rising sun'' configuration~\cite{RieszRisingSun}.
Recall that $\tau=\tan\theta $.
Two non-optimal segments illustrate the last part of the proof: the longest segment is the one
``tangent'' to the graph of $f(x)-\tau x-\delta$.
}
\end{center}
\end{figure}

\begin{proof}
As $f$ is a concave function, it is continuous, it is differentiable almost everywhere, and its derivative is decreasing.
Even when the derivative is not continuous at a point, it necessarily has left and right limits.
Therefore it has at most a countable number of jump points.

Let us first construct the segment $[A(\tau)B(\tau)]$.
For any $\tau\in\R$, the concave function~$f(x)-\tau x$ presents a maximum $m_\tau$ in~$[0,1]$,
which is reached on some non-empty compact sub-interval $J_\tau\subset [0,1]$ (usually a singleton).
The function $f(x)-\tau x$ is monotone on each of the connected components of  $[0,1]\backslash J_\tau$.
As a consequence, the set
\[
I(\tau,\delta)=\{x\in[0,1] \,;\, f(x)-\tau x \geq m_\tau-\delta\}
\]
is an interval increasing in $\delta$ that contains $J_\tau=I(\tau,0)$.
Let us define
\begin{equation}\label{eq:defxLxR}
\begin{gathered}
x_L(\tau) \defequal  \inf I(\tau,\delta)\,,
\qquad
x_R(\tau) \defequal  \sup I(\tau,\delta)\,.
\end{gathered}
\end{equation}
For any $x_0 \in J_\tau$, one has $m_\tau=f(x_0)-\tau x_0$ and the line of equation~$y=y_\tau(x)$ with
\[
y_\tau(x) \defequal  f(x_0) -\delta  +  \tau(x-x_0)  = \tau x + (m_\tau -\delta)
\]
does not depend on the actual choice of $x_0$ within $J_\tau$.
Let us define
\begin{equation}\label{defAB}
A(\tau) = (x_L(\tau), y_\tau(x_L(\tau)))\,, \qquad B(\tau) = (x_R(\tau), y_\tau(x_R(\tau)))\,.
\end{equation}
By definition~\eqref{eq:defxLxR}, for all $x\in[x_L(\tau), x_R(\tau)]=I(\tau,\delta)$,
one has $f(x) +\delta \geq m_\tau + \tau x \geq f(x)$,
thus the segment~$[A(\tau)B(\tau)]$ is of slope $\tau$ and is included in $S(f,\delta)$. 

\bigskip
Conversely, any segment~$[AB]$ of slope~$\tau$ included in $S(f,\delta)$ is supported
by a line of equation $y=\tau x+\beta$ and must satisfy (denoting by $x_A$, $x_B$ the abscissa of $A$ and $B$)
\begin{equation}\label{eq:constrain}
\forall x\in [x_A,x_B], \qquad
f(x) \geq \tau x+ \beta \geq f(x)-\delta \quad\ie\quad \beta+\delta \geq f(x)-\tau x\geq \beta\,.
\end{equation}
Let us show that $x_B-x_A \leq x_R(\tau)-x_L(\tau)$. 

\smallskip
If $\beta\geq m_\tau-\delta$ (\ie $[AB]$ is above $[A(\tau)B(\tau)]$) then
$f(x_A)-\tau x_A \geq \beta \geq m_\tau-\delta$;
the monotony of $f(x)-\tau x$ outside~$J_\tau$ and the definition of $x_L(\tau)$
imply $x_L(\tau)\leq x_A$.
Similarly, one has $x_B\leq x_R(\tau)$.
In other words, one has
\[
[x_A,x_B] \subset [x_L(\tau), x_R(\tau)]\,.
\]
If $\beta<m_\tau-\delta$, the constraint~\eqref{eq:constrain}
cannot be satisfied for $x\in J_\tau$ because
\[
\forall x\in J_\tau, \quad f(x)-\tau x = m_\tau > \beta+\delta\,,
\]
thus $[x_A,x_B]$ is a subset of~$[0,1]\backslash J_\tau$.
The concavity of $f(x)-\tau x$ implies that one can increase $x_B-x_A$ by shifting the interval towards $J_\tau$.
More precisely, let us assume for example that $[x_A,x_B]$ is on the right side of $J_\tau$ and that
$x_R(\tau)<1$ (otherwise nothing needs to be proved).
The inclusion $[AB]\subset S(f,\delta)$ implies
\begin{equation}\label{tech01}
f(x_A)-\tau x_A-\delta \leq \beta \leq f(x_B)-\tau x_B\,.
\end{equation}
The function $\tau-f'$ is defined almost everywhere and is positive and increasing
on the right-hand side of $J_\tau$. The inequality~\eqref{tech01} can thus be rephrased
\[
\int_{x_A}^{x_B} \tau-f'(x) dx \leq  \delta\,.
\]
Similarly, for any $x_0\in J_\tau$, one has $ f(x_0) -\tau x_0=m_\tau$ and  $x_R(\tau)<1$ 
implies :
\[
\int_{x_0}^{x_R(\tau)} \tau-f'(x) dx = m_\tau - \left(f(x_R) - \tau x_R(\tau)\right)= \delta\,.
\]
If $x_A \geq x_R(\tau)$ then the smaller integrand on $[x_0,x_R(\tau)]$ implies $x_R(\tau)-x_0\geq x_B-x_A$.
If~$x_A< x_R(\tau)$, the integrals on $[x_A,x_R(\tau)]$ cancel out, therefore
\[
 \int_{x_R(\tau)}^{x_B}  \tau-f'(x) dx \leq \int_{x_0}^{x_A}  \tau-f'(x) dx
\]
and thus $x_A-x_0\geq x_B-x_R(\tau)$. In both cases, $x_R(\tau)-x_L(\tau) \geq x_R(\tau)-x_0 \geq x_B-x_A$.

This proves that the maximal length is obtained for $\beta=m_\tau-\delta$.
As the values of the projection of $A,B$ on the $x$-axis are unique, the equations~\eqref{eq:defxLxR}-\eqref{defAB} ensure
that the segment~$[A(\tau)B(\tau)]$ is unique.
\end{proof}

The quantity that interests us for Theorem \ref{thm:conc} is obviously related to this first geometric construction.
\begin{lemma} 
\label{lemma:conc-change-of-var} With the notations of Theorem \ref{thm:conc}, we have
\begin{equation}
\int_{ - \frac \pi 2}^{\frac \pi 2} L(f, \delta, \theta ) \cos \theta  \, d\theta  =
\int_{-\infty}^{+\infty} \frac{x_R(y) - x_L(y)}{1+y^2} \, dy\,.
\end{equation}
We also have
\begin{equation}
\int_{ - \frac \pi 2}^{\frac \pi 2} L(f, \delta, \theta ) \cos^2 \theta  \, d\theta  =
\int_{-\infty}^{+\infty} \frac{x_R(y) - x_L(y)}{(1+y^2)^{3/2}} \, dy
\end{equation}
and for any positive measurable weight $\omega$ on $\left[-\frac{\pi}{2},\frac{\pi}{2}\right]$ :
\begin{equation}
\int_{ - \frac \pi 2}^{\frac \pi 2} L(f, \delta, \theta ) \, \omega(\theta)  \, d\theta  =
\int_{-\infty}^{+\infty} \frac{x_R(y) - x_L(y)}{\sqrt{1+y^2}} \:  \omega(\arctan y)\, dy\,.
\end{equation}
\end{lemma}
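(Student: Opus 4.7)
The key geometric fact that unifies all three identities is that the horizontal projection of the segment $[A(\tau)B(\tau)]$ constructed in Lemma~\ref{lemma:segment-tangent} has length $x_R(\tau)-x_L(\tau)$ by definition, while the segment itself has length $L(f,\delta,\theta)$ with slope $\tan\theta=\tau$. Elementary trigonometry (or Figure~\ref{fig:strip}) therefore gives
\begin{equation}\label{eq:planKey}
L(f,\delta,\theta)\cos\theta = x_R(\tau)-x_L(\tau), \qquad \tau=\tan\theta.
\end{equation}
My plan is to take~\eqref{eq:planKey} as the only geometric input and reduce each of the three identities to a single change of variable.

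I would then perform the substitution $y=\tan\theta$ on $\left(-\frac{\pi}{2},\frac{\pi}{2}\right)$, which is a smooth increasing bijection onto $\R$. The Jacobian satisfies $d\theta=\frac{dy}{1+y^2}$, and one has the standard identities $\cos\theta=\frac{1}{\sqrt{1+y^2}}$ and $\cos^2\theta=\frac{1}{1+y^2}$ with $\theta=\arctan y$. Substituting~\eqref{eq:planKey} into each left-hand integrand yields, respectively, $(x_R(y)-x_L(y))\,d\theta$, $(x_R(y)-x_L(y))\cos\theta\,d\theta$, and $(x_R(y)-x_L(y))\frac{\omega(\arctan y)}{\cos\theta}\,d\theta$. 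Applying the change of variable produces the three claimed right-hand sides after a one-line simplification.

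There is no real obstacle here: the identities are purely a trigonometric rewriting once~\eqref{eq:planKey} is in hand. The only point meriting a brief justification is measurability of $\tau\mapsto x_R(\tau)-x_L(\tau)$, which follows from the construction in Lemma~\ref{lemma:segment-tangent}: since $f$ is concave, $\tau\mapsto m_\tau$ is convex (as a sup of affine functions in $\tau$), and $x_L(\tau)$, $x_R(\tau)$ are monotone on each side of their respective unique jump set, hence Borel measurable. This ensures that both sides are well-defined Lebesgue integrals (possibly equal to $+\infty$) and the change of variable is legitimate on the intervals of regularity, which cover $\R$ up to a countable set.
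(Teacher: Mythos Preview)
Your proposal is correct and follows essentially the same approach as the paper: establish the identity $L(f,\delta,\theta)\cos\theta = x_R(\tan\theta)-x_L(\tan\theta)$ from Lemma~\ref{lemma:segment-tangent} and then perform the change of variable $y=\tan\theta$. The paper's proof is in fact even terser than yours, dispatching all three identities in two sentences; your added remarks on the Jacobian and on measurability are fine but not needed here (continuity of $x_L$, $x_R$ is established later in the paper).
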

\begin{proof}
The definition~\eqref{defAB} ensures that the length of the projection on the $x$-axis
of the segment $[A(\tau)B(\tau)]$ is $L(f, \delta, \theta ) \cos \theta  = x_R(\tan \theta) - x_L(\tan \theta )$.
The identities are then obtained by the change of variable~$y=\tan \theta .$
\end{proof}

\subsection{A second geometric construction based on the graph of $f'$}
We are now going to provide a second geometric construction of $x_L(\tau)$, $x_R(\tau)$.
We consider the graph of $f'$ and we complete it to a continuous curve in the following way.
At a jump point we add the vertical segment that joins the left and the right limits.
If~$f'(0^+)<\infty$ then we add the half-line $\{0\}\times [f'(0^+), \infty)$.
Similarly, if~$f'(1^-)>-\infty,$ we add the half-line~$\{1\}\times (-\infty, f'(1^-)]$.
We thus obtain a continuous curve, which we denote by $\gamma_{f'}$,
that contains the graph of $f'$, whose projection on the $x$-axis contains $]0,1[$ and is included in $[0,1]$
and whose projection on the $y$-axis is $\R$.

\medskip
Let us introduce
\[
\Gamma_{f'}^\pm \defequal  \left\{ (x,y)\in [0,1]\times\R \,;\, \exists y' \text{ such that } \pm(y-y')\geq0 \text{ and } (x,y')\in\gamma_{f'}\right\}\,.
\]
The sets $\Gamma_{f'}^\pm$ are the closed subsets of the strip $[0,1]\times\R$ that are respectively above and below $\gamma_{f'}$.
For every $y_0\in \R$ we consider
\[
x_0^- \defequal  \min \left\{x\in [0,1] \,;\, (x,y_0)\in \gamma_{f'} \right\}
\quad\text{and}\quad
x_0^+ \defequal  \max \left\{x\in [0,1] \,;\, (x,y_0)\in \gamma_{f'} \right\}\,.
\]
The segment $\{(x,y_0) \,;\, x_0^- \leq x \leq x_0^+\}$ is the intersection between $\gamma_{f'}$
and the horizontal line $y=y_0$ ; it reduces to a point when $x_0^+=x_0^-$.
We now build a family of ``triangles'' whose hypotenuse rests on $\gamma_{f'}$ and that
collapse on the segment $[x_0^-,x_0^+]\times\{y_0\}$ (see Figure~\ref{fig:gamma-f'}). For $x\in[0,1]$, let
\[
T_{f'}(y_0;x) \defequal  \begin{cases}
\Gamma_{f'}^- \cap \left\{(x',y) \,;\, x\leq x'\leq x_0^- \text{ and } y\geq y_0 \right\} & \text{if }x<x_0^-,\\
\{x,y_0\} & \text{if }x\in [x_0^-,x_0^+],\\
\Gamma_{f'}^+ \cap \left\{(x',y) \,;\, x_0^+\leq x'\leq x \text{ and } y\leq y_0 \right\} & \text{if }x>x_0^+.
\end{cases}
\]
The continuity of $\gamma_{f'}$ implies that the area $|T_{f'}(y_0; x)|$ of this triangle
is a continuous function of $x$ and the monotonicity of $f'$ implies that the area vanishes
along $[x_0^-,x_0^+]$  and is respectively strictly decreasing on $[0,x_0^-]$ and strictly increasing on $[x_0^+,1]$.

\medskip
We are interested in the two points where either
the area of the triangle equals $\delta$ or the triangle hits
the edge of the strip:
\begin{align*}
\widetilde{x_L}(y_0) &\defequal  \inf\left\{ x\in [0,x_0^-] \,;\, |T_{f'}(y_0; x)| \leq \delta\right\}\,,\\[1pt]
\widetilde{x_R}(y_0) &\defequal  \sup\left\{ x\in [x_0^+,1] \,;\, |T_{f'}(y_0; x)| \leq \delta\right\}\,.
\end{align*}
Let us prove that $\widetilde{x_L}(y_0) = x_L(y_0)$ and $\widetilde{x_R}(y_0) = x_R(y_0)$, \ie they are the same values
as the ones defined by \eqref{eq:defxLxR}. Using elementary calculus, we know that the area of the triangle is
\[
|T_{f'}(y_0;x)| =
\begin{cases}
\displaystyle\int_{x}^{x_0^-} \left( f'(x) - y_0\right) dx = -(x_0^--x) y_0 +f(x_0^-) - f(x) & \text{if } x<x_0^-,\\[1.5ex]
\displaystyle\int_{x_0^+}^{x} \left( y_0-f'(x) \right) dx = (x-x_0^+) y_0 +f(x_0^+) - f(x)& \text{if } x>x_0^+.
\end{cases}
\]
The conditions defining $\widetilde{x_L}(y_0)$ and $\widetilde{x_R}(y_0)$ thus boil down to
\[
x\in [\widetilde{x_L}(y_0), \widetilde{x_R}(y_0)] 
\quad\Longleftrightarrow\quad
f(x) -  y_0 x \geq f(x_0^\pm) -y_0 x_0^\pm   - \delta \,.
\]
Note that, by definition, $f(x)- y_0x$ is constant on $[x_0^-,x_0^+]$ and one recovers~\eqref{eq:defxLxR}
with $\tau =y_0$, $m_\tau= f(x_0^\pm) -y_0 x_0^\pm$ and $[x_0^-,x_0^+]=J_\tau$ from the proof
of Lemma~\ref{lemma:segment-tangent}.

\bigskip
\begin{figure}[H]
\begin{center}
\setlength{\unitlength}{1cm}
\includegraphics[width=14cm]{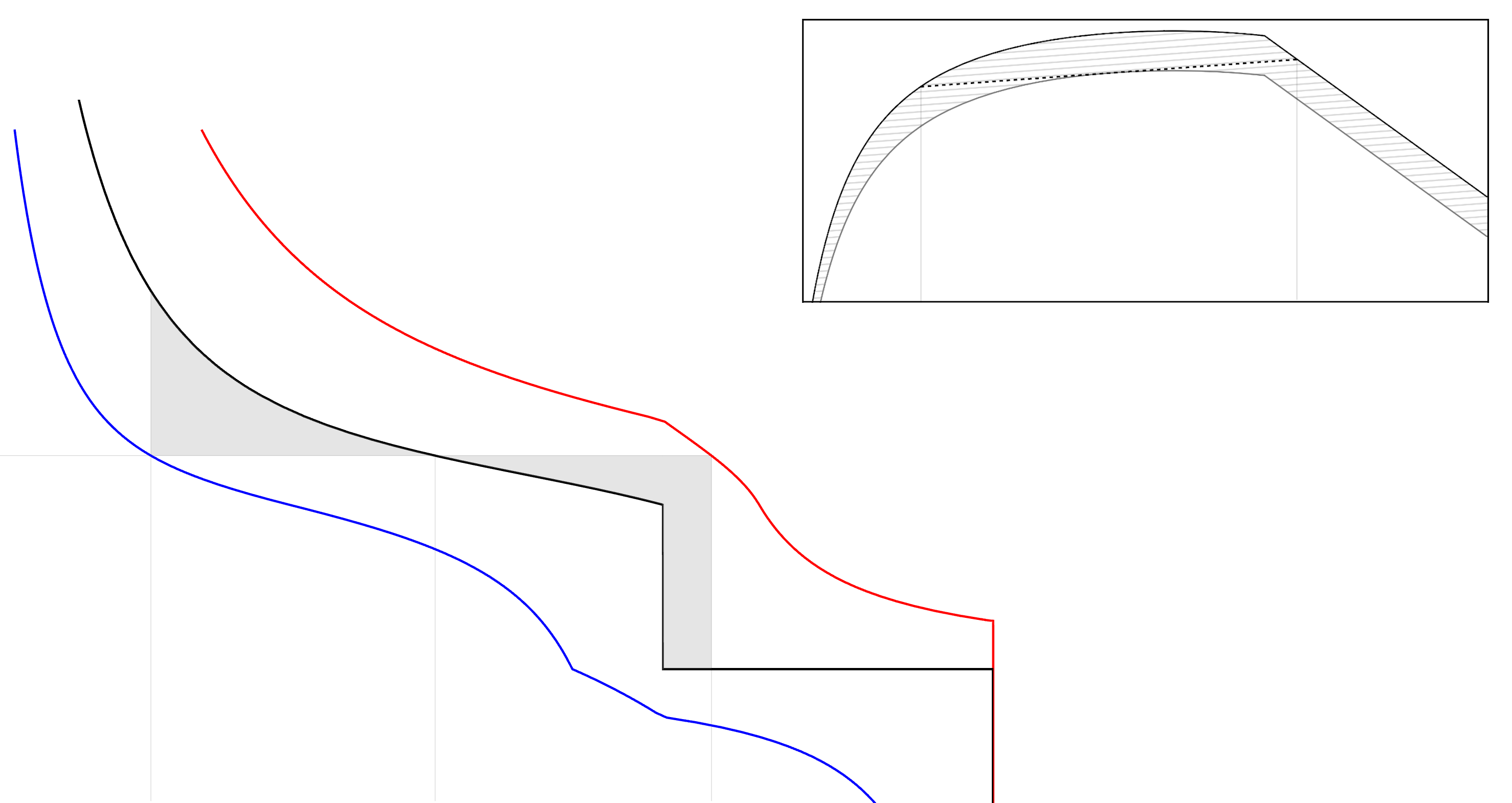}
\begin{picture}(0, 0)(14.1,0)
	\put(-0.1, 4.8){\color{blue} $\gamma_L$}
	\put(1.0, 6.2){\color{black} $\gamma_{f'}$}
	\put(2.6, 5.4){\color{red} $\gamma_R$}
	\put(0,3.1){$y_0$}
	\put(2,3.5){\color{gray} $\delta$}
	\put(6.38,2.4){\color{gray} $\delta$}
	\put(3.9,0){$x_0^\pm$}
	\put(1.1,0){$x_L(y_0)$}
	\put(6.25,0){$x_R(y_0)$}
	\put(8.3,4.8){\tiny $x_L(y_0)$}
	\put(8.1,6.77){\tiny $A(y_0)$}
	\put(11.78,4.8){\tiny $x_R(y_0)$}
	\put(12.2,6.9){\tiny $B(y_0)$}
	\put(8.9,4.2){\small $S(f,\delta)$ and $x\mapsto y_0 x+m_{y_0}-\delta$}
\end{picture}
\smallskip
\captionsetup{width=.95\linewidth}
\caption{\label{fig:gamma-f'}
Images of the curves $\gamma_{f'}$ in black, $\gamma_R$ in red and $\gamma_L$ in blue.
The triangles $T_{f'}(y_0;x_L)$ and $T_{f'}(y_0;x_R)$ are grayed out; their area is~$\delta$.
Inset: the profiles of $f$ and $f-\delta$ used to generate the figure; the dashed segment $[A(y_0)B(y_0)]$ has a slope $\tau=y_0$.
Note that, contrary to the profile presented in Figure~\ref{fig:strip}, this one has an unbounded slope near zero.
}
\end{center}
\end{figure}

We now have two equivalent constructions of $x_L(y)=\widetilde{x_L}(y)$ and $x_R(y)=\widetilde{x_R}(y)$
for every $y\in \R$.
The second construction ensures that  the maps $y\mapsto x_R(y)$ and $y\mapsto x_L(y)$ are continuous
and decreasing on $\R$. They are strictly decreasing respectively when $x_L(y)>0$ and $x_R(y)<1$.
We denote by  $\gamma_L(\delta)=\{ (x_L(y),y) \,;\, y\in\R\}$ and $\gamma_R(\delta)=\{ (x_R(y),y) \,;\, y\in\R\}$ the two
curves that are ``offset''  from $\gamma_{f'}$ by a triangular area of $\delta$ (see Figure~\ref{fig:gamma-f'}).

\begin{lemma}\label{lemma:est-diagonal}
For every $y\in \R$ let us construct the unique square with an upper-right corner at $(x_R(y), y)\in \gamma_R(\delta)$
and a lower-left corner on $\gamma_L(\delta)$.
The area of this square is smaller than $2 \delta$ and therefore its diagonal is smaller than $2\sqrt \delta$.
\end{lemma}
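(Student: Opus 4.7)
The plan is to bound the side length $s$ of the square directly, by a planar area argument rather than by any parametric computation. I write the upper-right corner as $(x_R(y), y)$ and the lower-left corner as $(x_R(y)-s,\, y-s)$; the requirement that the latter lie on $\gamma_L(\delta)$ reads
\[
x_L(y-s) = x_R(y)-s.
\]
Existence and uniqueness of $s>0$ follow from the intermediate value theorem applied to the continuous, strictly increasing map $s\mapsto x_L(y-s)+s$, combined with the inequality $x_L(y)<x_R(y)$ and the fact (noted just after Lemma~\ref{lemma:segment-tangent}) that $y\mapsto x_L(y)$ is continuous and decreasing. Denote the resulting square by $R \defequal [x_R(y)-s,\,x_R(y)]\times[y-s,\,y]$, so that the statement to prove reduces to $|R|=s^2\leq 2\delta$.

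The key observation is that $\gamma_{f'}$ cuts $R$ into two pieces
\[
R^+ \defequal \Gamma_{f'}^+ \cap R, \qquad R^- \defequal \Gamma_{f'}^- \cap R,
\]
each of which is dominated by one of the extremal triangles of area $\delta$ built in the previous subsection. Precisely, I claim that, up to planar measure zero,
\[
R^+ \subseteq T_{f'}(y;\, x_R(y)), \qquad R^- \subseteq T_{f'}(y-s;\, x_L(y-s)).
\]
Granted these two inclusions, the lemma is immediate:
\[
s^2 = |R| = |R^+| + |R^-| \leq |T_{f'}(y;\,x_R(y))| + |T_{f'}(y-s;\,x_L(y-s))| \leq 2\delta,
\]
so that the diagonal is bounded by $s\sqrt 2 \leq 2\sqrt\delta$.

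The verification of the first inclusion (the second is symmetric) is the only step that requires care. Any $(x,y'')\in R^+$ satisfies $x\leq x_R(y)$ and $y''\leq y$ by construction, and the definition of $\Gamma_{f'}^+$ gives $y''\geq f'(x^+)$, hence $f'(x^+)\leq y$, which by the monotonicity of $f'$ forces $x\geq x_0^-(y)$. Outside the at most countable set of heights $y$ at which $\gamma_{f'}$ admits a horizontal plateau, one has $x_0^-(y)=x_0^+(y)$, so $x\geq x_0^+(y)$ and the inclusion holds pointwise. At an exceptional height, the strip $x\in[x_0^-(y),\,x_0^+(y))$ forces $f'(x^+)=y$ hence $y''=y$, which contributes only a horizontal slice of 2D-measure zero. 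A further minor bookkeeping point concerns the boundary cases $x_R(y)=1$ or $x_L(y-s)=0$, in which the triangles are clipped by $[0,1]\times\R$ and have area at most $\delta$ rather than exactly $\delta$; this only reinforces the estimate.

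I expect the main conceptual obstacle to lie not in any single computation but in identifying the right partition. The insight that $\gamma_{f'}$ cleanly separates the square into two regions, each individually dominated by one of the two area-$\delta$ triangles that respectively define $\gamma_R(\delta)$ and $\gamma_L(\delta)$, is what glues the two ``offset'' constructions together. Once this geometric picture is in hand, the measure-theoretic verification of the inclusions reduces to a straightforward use of the monotonicity of $f'$.
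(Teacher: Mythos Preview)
Your proof is correct and follows essentially the same approach as the paper: partition the square by the curve $\gamma_{f'}$ into the pieces $\Gamma_{f'}^\pm\cap R$ and show each is contained (up to a null set) in the corresponding area-$\delta$ triangle $T_{f'}$. The paper's argument is more pictorial, asserting the two inclusions directly from the four entry/exit configurations of $\gamma_{f'}$ through the square, whereas you supply the analytic verification via the monotonicity of $f'$ and handle the horizontal-plateau case explicitly; both routes yield the same $s^2\leq 2\delta$.
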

\begin{proof}
The construction of the square is obvious.
The curve $\gamma_L$ is below the curve $\gamma_{f'}$, which is itself below  $\gamma_R$.
As $y\mapsto x_L(y)$ is decreasing, the curve $\gamma_{L}$ intersects a line of slope $\pi/4$
passing through $(x_R(y), y)$ in a unique point whose coordinates are, by definition, of the form $(x_L(y'), y')$ for
some $y'<y$. The two points $(x_R(y), y)$ and $(x_L(y'), y')$ are the opposite corners of a square, which we
will denote by $Q(y',y)$ in the rest of this proof; see Figure~\ref{fig:sqDiag} (left).
Notice that this square is always included in the strip between the curves $\gamma_L$ and $\gamma_R.$ 

\bigskip
\begin{figure}[H]\label{fig:diagonale}
\begin{center}
\resizebox{.75\linewidth}{!}{%
\setlength{\unitlength}{1cm}
\includegraphics[width=10cm]{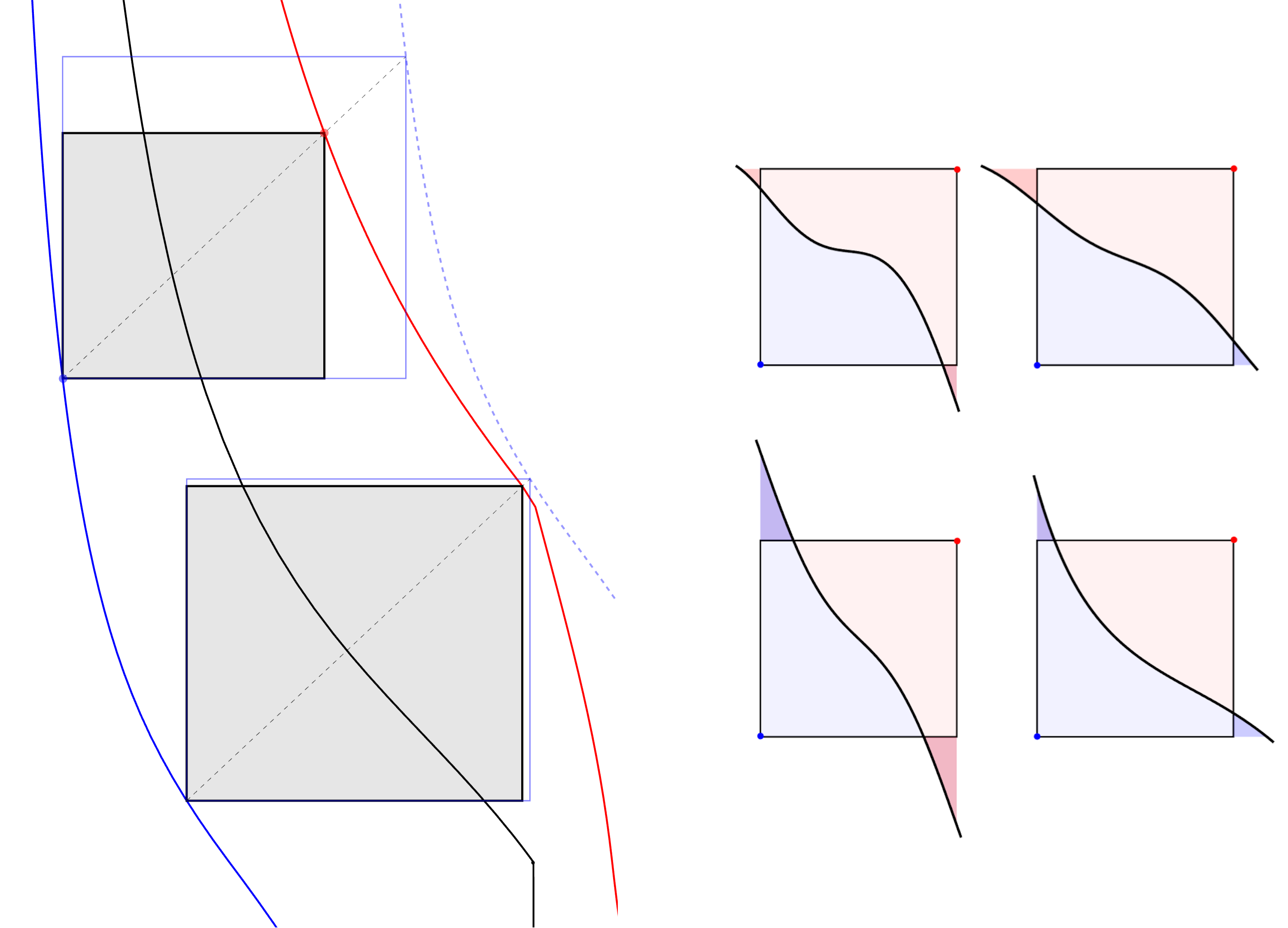}
\begin{picture}(0, 0)(10.1,0)
	\put(0, 7.4){\color{blue}\small $\gamma_L$}
	\put(0.8, 7.4){\color{black}\small $\gamma_{f'}$}
	\put(2.0, 7.4){\color{red}\small $\gamma_R$}
	\put(2.9, 7.4){\color{blue}\small $\gamma_L+2\sqrt{\delta} \protect \vv{e}$}
	\put(1.1,6.3){\color{gray} \footnotesize $Q(y',y)$}
	\put(2.5,6.3){\color{red} \tiny $(x_R(y),y)$}
	\put(0.5,4){\color{blue} \tiny $(x_L(y'),y')$}
	\put(7.15,6.5){\color{black} \footnotesize $Q(y',y)$}
\end{picture}
}
\captionsetup{width=\linewidth}
\caption{\label{fig:sqDiag}%
{\bf Left}: An example of squares $Q(y',y)$
with opposite corners $(x_L(y'), y')\in\gamma_L$ and $(x_R(y), y)\in\gamma_R$.
According to Lemma~\ref{lemma:est-diagonal}, their diagonal is bounded by $2\sqrt{\delta}$:
the curve $\gamma_R$ is below the offset $\gamma_L+2\sqrt{\delta} \protect \vv{e}$ (dashed blue) 
where $\protect \vv{e}=(1,1)/\sqrt{2}$ is the unit vector along the diagonal.
In this example, note how tight the estimate is near the point where~$f''(x)\simeq -1$.\\[0.5ex]
{\bf Right}: The four possible configurations corresponding to how $\gamma_{f'}$ can enter or exit $Q(y',y)$. 
The area of $Q(y',y)$ complemented by the two highlighted triangles is, by construction, exactly~$2\delta$.
}
\end{center}
\end{figure}

The curve $\gamma_{f'}$ can only enter the square on its left or upper side and
can only leave the square on its right or bottom side,
as seen in Figure~\ref{fig:sqDiag} (right). 
In all 4 cases, one has
\begin{align*}
Q(y',y)\cap \Gamma_{f'}^-  &\subset T_{f'}(y'; x_L(y')) \,,\\
Q(y',y)\cap \Gamma_{f'}^+ &\subset T_{f'}(y;x_R(y)) \,,
\end{align*}
thus $Q(y',y)\subset T_{f'}(y'; x_L(y')) \cup T_{f'}(y;x_R(y))$.
As this is a measurably disjoint union of 2~triangles of area at most $\delta$ (the area of $\gamma_{f'}$ is zero),
the area of the square $Q(y',y)$ is smaller than 
or equal to $2\delta$ and consequently its diagonal is smaller than or equal to~$2\sqrt{\delta}$. 
\end{proof}

From this point on, the idea is to use Fubini's theorem to slice the strip
between the curves $\gamma_L$ and $\gamma_R$ along the first diagonal.
In this direction, according to Lemma~\ref{lemma:est-diagonal}, the girth does not exceed $2\sqrt{\delta}$
and the decay of the integrands will ensure the integrability.
We prepare this computation by a suitable change of variable.

\begin{lemma}\label{lemma:cgt-var45}
Let us denote by $\Omega$ the strip between the curves $\gamma_L$ and $\gamma_R$.  One has
\begin{equation}\label{eq:cgt-var45-Omega} 
\int_{ - \frac \pi 2}^{\frac \pi 2} L(f, \delta, \theta ) \cos \theta  \, d\theta   =
\iint_{\Omega} \frac{dxdy}{1+y^2} =
\iint_{\Omega'} \frac{dx_1dx_2}{1+\frac{1}{2}(x_1-x_2)^2} \,,
\end{equation}
\begin{equation}\label{eq:cgt-var45-Omega-bis} 
\int_{ - \frac \pi 2}^{\frac \pi 2} L(f, \delta, \theta ) \cos^2 \theta  \, d\theta   =
\iint_{\Omega} \frac{dxdy}{(1+y^2)^{3/2}} =
\iint_{\Omega'} \frac{dx_1dx_2}{\left(1+\frac{1}{2}(x_1-x_2)^2\right)^{3/2}} \,,
\end{equation}
where $\Omega'=R_{\pi/4} \left(\Omega\right)$ is the image of $\Omega$ by the 
rotation of angle $\pi/4$ that maps $(1/2,0)$ to the origin. 
More generally, for any positive weight $\omega \in L^1_{\text{loc}}\left(-\frac{\pi}{2},\frac{\pi}{2}\right)$
such that $\omega\left(\pm(\frac{\pi}{2}-t)\right)\leq C \left|\ln t\right|^{-\beta}$ with $\beta>1$ as $t\to0^+$, one has
\begin{equation}\label{eq:cgt-var45-Omega-ter}
\int_{ - \frac \pi 2}^{\frac \pi 2} L(f, \delta, \theta ) \, \omega(\theta)  \, d\theta 
=\iint_{\Omega} \frac{\omega(\arctan y)}{\sqrt{1+y^2}}  \, dxdy
=\iint_{\Omega'} \frac{\omega\left(\arctan \frac{x_2-x_1}{\sqrt{2}}\right)}{\sqrt{1+\frac{1}{2}(x_1-x_2)^2}}  \, dx_1dx_2 \,.
\end{equation}
\end{lemma}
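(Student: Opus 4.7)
\medskip\noindent
\textbf{Proof proposal for Lemma~\ref{lemma:cgt-var45}.}
The plan is to start from Lemma~\ref{lemma:conc-change-of-var}, re-interpret the factor $x_R(y)-x_L(y)$ as a one-dimensional Fubini slice of the planar region~$\Omega$, and then change variables via the rotation~$R_{\pi/4}$. First, by Lemma~\ref{lemma:conc-change-of-var},
\[
\int_{-\pi/2}^{\pi/2} L(f,\delta,\theta)\cos\theta\,d\theta = \int_{-\infty}^{+\infty} \frac{x_R(y)-x_L(y)}{1+y^2}\,dy,
\]
with analogous identities for the weights $\cos^2\theta/(1+y^2)$ and $\omega(\arctan y)/\sqrt{1+y^2}$. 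Since $x\mapsto x_L(y)$ and $x\mapsto x_R(y)$ are, by construction~\eqref{eq:defxLxR}, the left and right endpoints at height $y$ of the horizontal slice of~$\Omega$, one has $x_R(y)-x_L(y)=\int_\R \mathbf{1}_\Omega(x,y)\,dx$, and Fubini's theorem (applied to the positive integrand) yields the middle integrals in \eqref{eq:cgt-var45-Omega}, \eqref{eq:cgt-var45-Omega-bis} and \eqref{eq:cgt-var45-Omega-ter}.

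Next I would pass from $\Omega$ to $\Omega'=R_{\pi/4}(\Omega)$ by applying the rotation. Writing $(x_1,x_2)=R_{\pi/4}(x-1/2,y)$, an explicit computation gives
\[
x_1 = \tfrac{1}{\sqrt{2}}\bigl((x-\tfrac12)-y\bigr),\qquad x_2 = \tfrac{1}{\sqrt{2}}\bigl((x-\tfrac12)+y\bigr),
\]
so that $y=(x_2-x_1)/\sqrt{2}$ and therefore $1+y^2 = 1+\tfrac{1}{2}(x_1-x_2)^2$. As $R_{\pi/4}$ is a Euclidean isometry, its Jacobian is~$1$ and $dxdy=dx_1dx_2$, which produces the right-hand double integrals in \eqref{eq:cgt-var45-Omega} and \eqref{eq:cgt-var45-Omega-bis}, as well as the factor $\omega\bigl(\arctan\tfrac{x_2-x_1}{\sqrt 2}\bigr)$ in~\eqref{eq:cgt-var45-Omega-ter}.

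The only mildly delicate point is justifying the applicability of Fubini in the weighted case, i.e.\ the a priori integrability of
\[
\frac{\omega(\arctan y)}{\sqrt{1+y^2}}(x_R(y)-x_L(y))
\]
on $\R$. Since $x_R(y)-x_L(y)\in[0,1]$ and the hypothesis on $\omega$ gives $\omega(\arctan y)\leq C(\ln|y|)^{-\beta}$ for $|y|$ large (using $\pi/2-\arctan|y|=\arctan(1/|y|)\sim 1/|y|$ as $|y|\to\infty$), the integrand is bounded by a constant times $|y|^{-1}(\ln|y|)^{-\beta}$ at infinity, which is integrable precisely because $\beta>1$. This is the step I expect to be the main (though very mild) obstacle; the unweighted cases~$\cos\theta$ and~$\cos^2\theta$ are already handled since $1/(1+y^2)$ and $1/(1+y^2)^{3/2}$ decay fast enough on their own. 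Together with the previous two paragraphs, this proves all three identities.
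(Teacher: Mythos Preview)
Your proof is correct and follows essentially the same approach as the paper: invoke Lemma~\ref{lemma:conc-change-of-var}, read $x_R(y)-x_L(y)$ as the length of the horizontal slice of~$\Omega$ to apply Fubini, then perform the translation-plus-rotation change of variables with unit Jacobian. Your integrability check for the weighted case (via $\pi/2-\arctan|y|\sim 1/|y|$ and the Bertrand-type bound $|y|^{-1}(\ln|y|)^{-\beta}$) is exactly the argument the paper sketches.
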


\begin{proof} 
In view of Lemma \ref{lemma:conc-change-of-var}
and the geometric construction above (and because $x_R$ and~$x_L$ are continuous), we can re-write the integral 
with Fubini's theorem, which gives the first identities.
The change of variable is the composition of a translation by $(-1/2,0)$
that moves the domain $\Omega$ into the strip $[-1/2, 1/2] \times \R$
followed by a rotation around the origin of angle $\frac{\pi}{4}$.
We denote by $\Omega'$ the image of $\Omega$ by this isometry (see Figure~\ref{fig:Omega'}).
The new coordinates $(x_1,x_2)\in\Omega'$ are thus related to the old ones by
\[
x=\frac{1}{2} + \frac{1}{\sqrt{2}}(x_1 + x_2)
\qquad\text{and}\qquad 
y=\frac{1}{\sqrt{2}}(x_2-x_1) \,.
\]
The Jacobian determinant is obviously equal to $1$.
For~\eqref{eq:cgt-var45-Omega-ter}, the assumption on the weight $\omega$ ensures that $\omega(\arctan y)\leq C \ln^{-\beta} |y|$
at infinity, which in turn ensures the integrability thanks to Bertrand's criterion.
\end{proof}

\begin{remark}
In Section~\ref{sect:algo}, we apply~\eqref{eq:cgt-var45-Omega-ter} with a bounded regular even weight that decreases
away from zero and vanishes at the endpoints~$\pm\pi/2$.
In particular, the assumption will be satisfied because $\omega\left(\frac{\pi}{2}-t\right)\leq C |t| \ll \left|\ln t\right|^{-2}$ as $t\to0$.
\end{remark}

\subsection{Proof of the upper bounds in Theorems \ref{thm:conc} and~\ref{thm:conc2}}
Using the previous lemmas we can now prove the upper bounds stated in Theorems~\ref{thm:conc} and~\ref{thm:conc2}.
As $\Omega$ is a subset of $[0,1]\times\R$, 
$\Omega'$ lies between the lines $(\pm\frac{\sqrt 2}{2}, 0) + \R\cdot(1,-1)$.
Therefore, 
\[
\forall (x_1, x_2)\in \Omega',\qquad
-\frac{\sqrt 2}{2}-x_1 \leq x_2 \leq \frac{\sqrt 2}{2} -x_1\,.
\]
Consequently, 
\begin{equation}\label{est:hOmega}
-2\left(\frac{\sqrt 2}{4} +x_1\right) \leq x_2-x_1 \leq 2\left(\frac{\sqrt 2}{4} -x_1\right) .
\end{equation}
Moreover, Lemma \ref{lemma:est-diagonal} ensures that for every $x_1\in \R$, the length of any vertical section
of $\Omega'$ is bounded in the following way:
\begin{equation}
\label{est:section1-Omega}
\forall x_1\in\R,\qquad
\left| \{x_2 \,;\, (x_1, x_2)\in \Omega'\} \right| \leq 2\sqrt{\delta}\,.
\end{equation}

\begin{figure}[H]
\begin{center}
\setlength{\unitlength}{1cm}
\includegraphics[width=10cm]{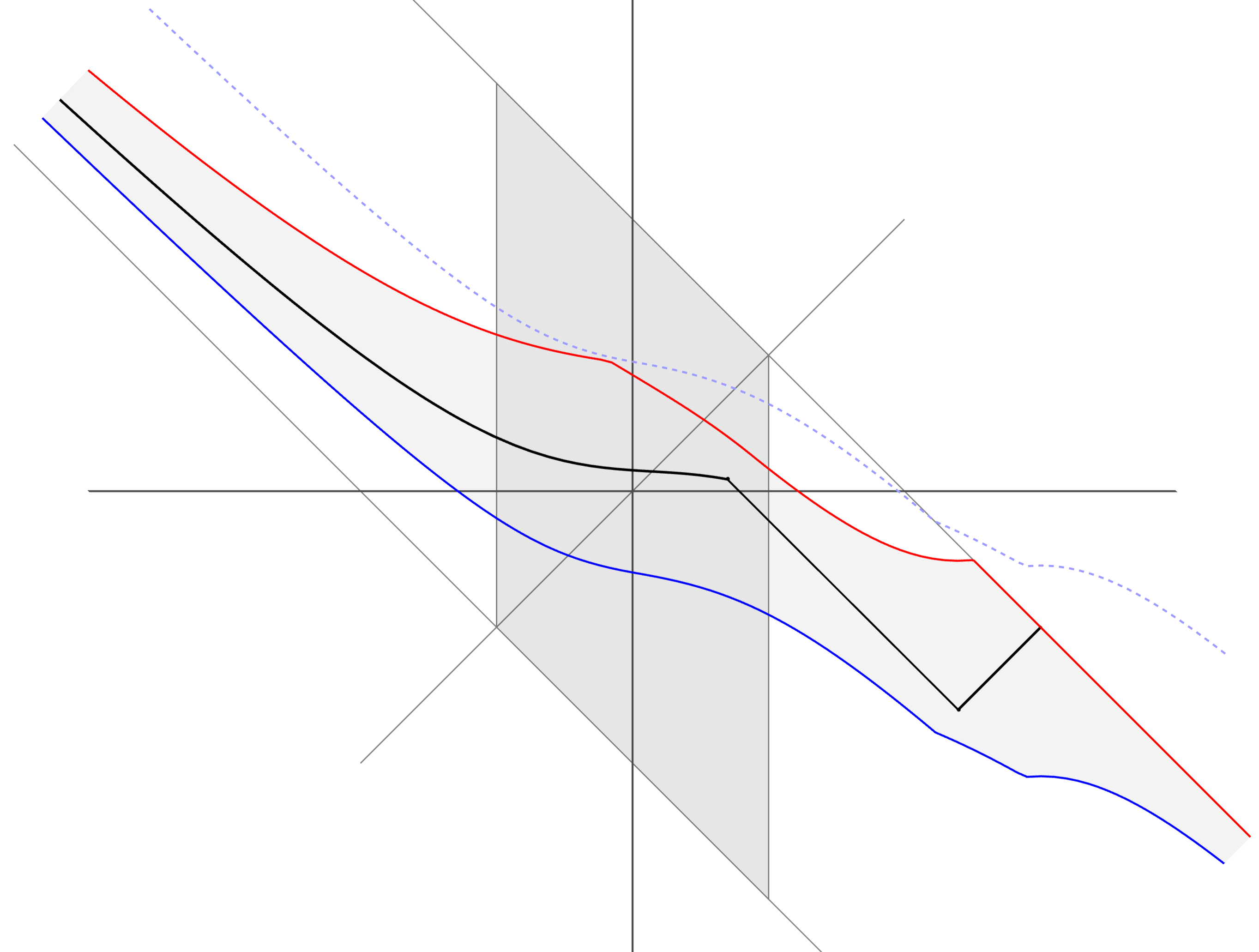}
\begin{picture}(0, 0)(10.1,0)
	\put(8.3,1.7){$\Omega'$}
	\put(2,5.04){\color{blue}\vector(0,1){1.66}}
	\put(2,6.7){\color{blue}\vector(0,-1){1.66}}
	\put(2.2,6.1){\color{blue}\footnotesize $2\sqrt{\delta}$}
	\put(9.5,3.6){$x_1$}
	\put(5.1,7){$x_2$}
	\put(6.1,3.9){\color{gray}\footnotesize $\frac{\sqrt{2}}{4}$}
	\put(3.2,3.2){\color{gray}\footnotesize $-\frac{\sqrt{2}}{4}$}
\end{picture}
\captionsetup{width=.95\linewidth}
\caption{\label{fig:Omega'}%
The length of the vertical sections of $\Omega'=R_{\frac{\pi}{4}}(\Omega)$ do not exceed~$2\sqrt{\delta}$.}
\end{center}
\end{figure}

We now split $\Omega'$ into three parts $\Omega'=\Omega'_\text{I} \cup \Omega'_{\text{II}} \cup \Omega'_\text{III}$ with
\[
\Omega'_\text{I} = \Omega' \cap \left\{x_1\leq -\frac{\sqrt{2}}{4}\right\},
\quad
\Omega'_\text{II} = \Omega' \cap \left\{x_1\geq \frac{\sqrt{2}}{4}\right\}
\quad\text{and}\quad
\Omega'_\text{III} = \Omega' \cap \left\{-\frac{\sqrt{2}}{4}<x_1<\frac{\sqrt{2}}{4}\right\}\,.
\]
On $\Omega'_\text{I}$ we have $\frac{\sqrt{2}}{4} +x_1\leq 0$ and all the terms that appear in the estimate~\eqref{est:hOmega}
are positive. Taking the square of the left-hand side gives $1+2\left(x_1+\frac{\sqrt 2}{4}\right)^2 \leq 1+\frac{1}{2}(x_2-x_1)^2$
and therefore 
\[
\iint_{\Omega'_\text{I}} \frac{dx_1dx_2}{1+\frac{1}{2} (x_1-x_2)^2}  \leq
\iint_{\Omega'_\text{I}} \frac{dx_1dx_2}{1+2\left(x_1+\frac{\sqrt 2}{4}\right)^2}\cdotp
\]
Notice that the function to integrate on the right-hand side does not depend on $x_2$.
Using Fubini's theorem and the estimate~\eqref{est:section1-Omega} we have
\[
\iint_{\Omega'_\text{I}} \frac{dx_1dx_2}{1+\frac{1}{2} (x_1-x_2)^2}  \leq
2\sqrt{\delta}\int_{-\infty}^{-\frac{\sqrt 2}{4}} \frac{dx_1}{1+2\left(x_1+\frac{\sqrt 2}{4}\right)^2} \,\cdotp
\]
With a change of variable $t=\frac{1}{2}+\sqrt{2}x_1$ we obtain 
\begin{equation}
\label{est:Omega'_I}
\iint_{\Omega'_\text{I}} \frac{dx_1dx_2}{1+\frac{1}{2} (x_1-x_2)^2}  \leq
\frac{\pi}{\sqrt{2}} \, \sqrt{\delta} \,.
\end{equation}
On $\Omega'_\text{II}$ we have that $\frac{\sqrt{2}}{4} -x_1\leq 0$ and a similar computation to the one on $\Omega'_\text{I}$
leads to 
\begin{equation}\label{est:Omega'_II}
\iint_{\Omega'_\text{II}} \frac{ dx_1dx_2}{1+\frac{1}{2} (x_1-x_2)^2} \leq
2\sqrt{\delta} \int_{\frac{\sqrt 2}{4}}^{\infty} \frac{dx_1}{1+2\left(x_1-\frac{\sqrt 2}{4}\right)^2}
=\frac{\pi}{\sqrt{2}} \, \sqrt{\delta} \,.
\end{equation}
On $\Omega'_\text{III}$, the decay of the integrand is negligible so we use $\frac{1}{1+\frac{1}{2} (x_1-x_2)^2}\leq 1$.
The geometric estimate~\eqref{est:section1-Omega} of the length of the vertical slices provides
\begin{equation}\label{est:Omega'_III}
\iint_{\Omega'_\text{III}} \frac{ dx_1dx_2}{1+\frac{1}{2} (x_1-x_2)^2} \leq
2\sqrt{\delta} \times 2\frac{\sqrt{2}}{4} = \sqrt{2\delta}\,.
\end{equation}
We put together the estimates \eqref{est:Omega'_I}, \eqref{est:Omega'_II}, \eqref{est:Omega'_III} 
 into the expressions given by Lemma~\ref{lemma:cgt-var45} and conclude that 
\[
\int_{ - \frac \pi 2}^{\frac \pi 2} L(f, \delta, \theta ) \cos \theta  \, d\theta  \leq (1+\pi)\sqrt{ 2 \delta} \,.
\]
Normalizing by $1/\pi$ gives~\eqref{main_estim} with the numerical constant $\frac{1+\pi}{\pi} \sqrt{2}\approx 1.86437$.
A similar computation can be performed for the second integral:
\begin{align*}
\int_{ - \frac \pi 2}^{\frac \pi 2} L(f, \delta, \theta ) \cos^2 \theta  \, d\theta  &= 
\iint_{\Omega'_\text{I} \cup \Omega'_\text{II} \cup \Omega'_\text{III}} \frac{dx_1dx_2}{\left(1+\frac{1}{2}(x_1-x_2)^2\right)^{3/2}}
\\
&\leq 2\sqrt{\delta}\left(
2\times \int_{\frac{\sqrt 2}{4}}^{\infty} \frac{dx_1}{\left(1+2\left(x_1-\frac{\sqrt 2}{4}\right)^2\right)^{3/2}}
+2\times \frac{\sqrt{2}}{4}
\right).
\end{align*}
The right-hand side is equal to $3\sqrt{2\delta}\approx \pi \times 1.35047\sqrt{\delta}$, as claimed by~\eqref{main_estim_bis}.

\medskip
Similarly, for a general even and positive weight function~$\omega$ on~$\left(-\frac{\pi}{2},\frac{\pi}{2}\right)$
that is decreasing on $[0,\pi/2)$, thanks to~\eqref{est:hOmega}, one has on $\Omega'_\text{I}\cup \Omega'_\text{II}$:
\[
\omega\left(\arctan \frac{x_2-x_1}{\sqrt{2}}\right) \leq \omega\left(\arctan\left( \frac{1}{2}-\sqrt{2}\, |x_1| \right)\right).
\]
The estimate~\eqref{main_estim_ter} follows immediately, provided
that the constant~\eqref{main_estim_ter_techsupport} is finite (\eg under the assumptions stated in Lemma~\ref{lemma:cgt-var45},
which are recalled in Theorem~\ref{thm:conc2}).
One can easily check that this estimate boils down to the previous~\eqref{main_estim} when $\omega(\theta)=\cos\theta$
and to~\eqref{main_estim_bis} when~$\omega(\theta)=\cos^2\theta$.

\subsection{Proof of the lower bounds in Theorem \ref{thm:conc}}\label{par:thmconcLower}

We end this section with computations on particular cases that 
assert the quasi-optimality of the constants from Theorem \ref{thm:conc}.
The best (\ie highest) lower bound is given by the second example, however the others
are instructive for getting a feel for which cases are the least favorable to our algorithm (see Section~\ref{sect:algo}).

\paragraph{Example 1 :} If $f(x)=ax+b$ for some $a,b\in \R$, then $S(f, \delta)$ is a parallelogram.
Let us introduce $\alpha = \arctan(a)$ and $\theta _0, \theta _1\in (0,\frac{\pi}{2})$ the geometric angles that
the diagonals make with the long sides of the parallelogram. 
Let us reason with $a\geq0$ as in Figure~\ref{fig:exemple1}.

\begin{figure}[H]
\begin{center}
\resizebox{0.45\linewidth}{!}{%
\setlength{\unitlength}{1cm}
\includegraphics[width=8cm]{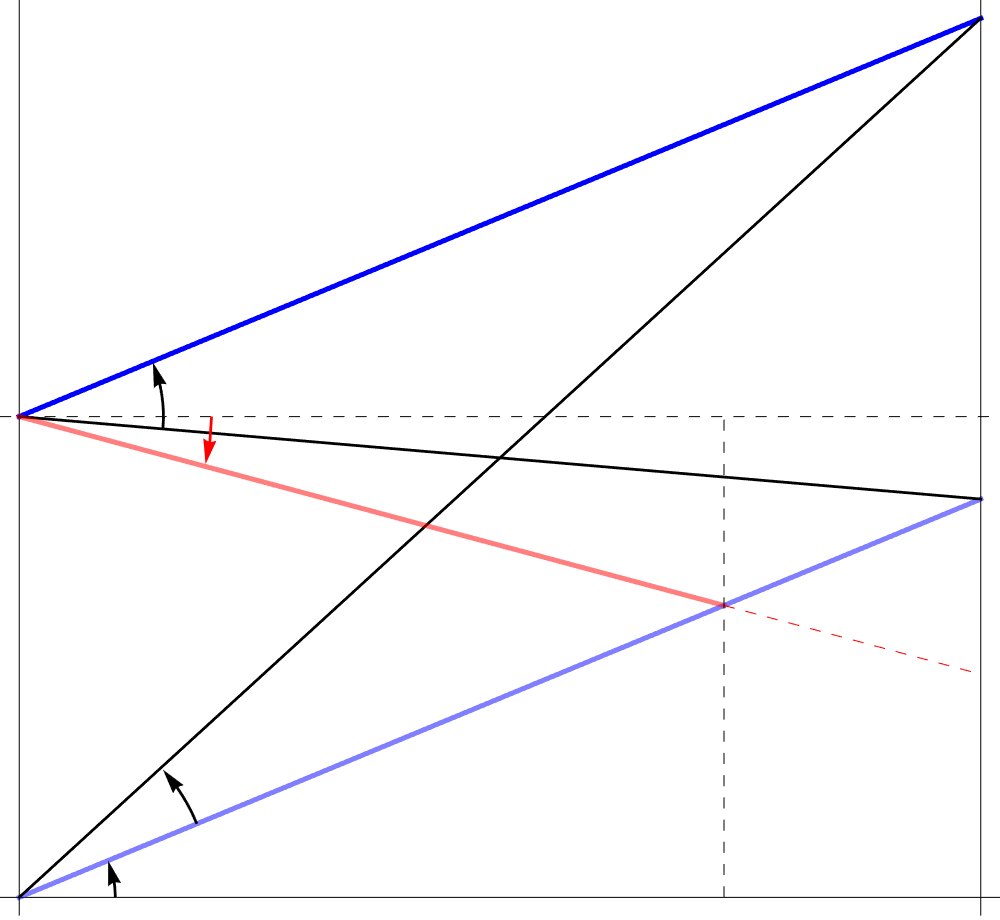}
\begin{picture}(0, 0)(8.1,0)
	\put(1.4,4.2){$\theta_1$}
	\put(1.5,1.05){$\theta_0$}
	\put(1,0.3){$\alpha$}
	\put(0.045,-0.08){\footnotesize $0$}
	\put(5.68,-0.05){\footnotesize $x$}
	\put(7.74,-0.08){\footnotesize $1$}
	\put(-0.25,3.98){\footnotesize $b$}
	\put(-0.76,0.1){\footnotesize $b-\delta$}
	\put(1.9,3.67){\color{red} $\theta$}
\end{picture}
}
\captionsetup{width=.75\linewidth}
\caption{\label{fig:exemple1}%
Case of $f(x)=ax+b$. The red segment is of length $ L(f, \delta, \theta )$.
The graphic corresponds to $\theta  \in (-\frac{\pi}{2}, \alpha -\theta _1)$
and illustrates the identity $\delta + x\tan(\theta ) =x\tan (\alpha)$
satisfied by $x=L(f, \delta, \theta ) \cos \theta $.
}
\end{center}
\end{figure}

One has $\tan(\alpha + \theta _0)=a+\delta$ and $\tan(\alpha - \theta _1)=a-\delta$ and
for $\theta \in (\alpha - \theta _1, \alpha + \theta _0)$ we have $L(f, \delta, \theta ) \cos \theta =x_R(\theta ) -x_L(\theta )=1.$
For $\theta  \in \left(-\frac{\pi}{2}, \alpha -\theta _1\right)$ the length of the projection $\ell=L(f, \delta, \theta ) \cos \theta $
satisfies $\delta + \ell \tan(\theta ) =\ell \tan (\alpha)$,
\ie $\ell= \frac{\delta}{\tan \alpha - \tan \theta }\cdotp$
Similarly, for $\theta  \in (\alpha +\theta _0, \frac{\pi}{2})$ we have $L(f, \delta, \theta ) \cos \theta = \frac{\delta}{\tan \theta  - \tan \alpha} \cdotp$
Splitting the integral thus gives
\[
\int_{ - \frac \pi 2}^{\frac \pi 2} L(f, \delta, \theta ) \cos \theta  \, d\theta  =
\int_{ -\frac{\pi}{2}}^{\alpha-\theta _1}  \frac{\delta  \, d\theta  }{\tan \alpha - \tan \theta } + \theta _0+\theta _1
+\int_{\alpha+\theta _0}^{\frac{\pi}{2}} \frac{\delta  \, d\theta }{\tan \theta  -\tan\alpha}\,\cdotp
\]
This integral is easiest to compute when $a=\alpha=0$ \ie when $f$ is a constant;
in this case one has $\theta_0=\theta_1\simeq \delta$ and
\[
\int_{ - \frac \pi 2}^{\frac \pi 2} L(f, \delta, \theta ) \cos \theta  \, d\theta  =
2\left( \int_{ 0}^{\theta _0} 1\, d\theta  + \int_{\theta _0}^{\frac{\pi}{2}}\frac{\delta}{\tan \theta } \, d\theta  \right)
=2(\theta _0 - \delta \log(\sin \theta _0))\,,
\]
which is of leading order $-2\delta \ln(\delta)\ll \sqrt{\delta}$.
In the general case, one has 
\[
\theta _0+\theta _1 =  \arctan(a+\delta)-\arctan(a-\delta) = \frac{2\delta}{1+a^2} + O(\delta^3)\,,
\]
and a primitive
\[
\int \frac{d\theta }{\tan\theta -\tan\alpha} = \frac{- a\theta  + \log\left| (a - \tan\theta )\cos\theta  \right| }{1+a^2}\,\cdotp
\]
One thus obtains
\begin{equation}\label{eq:ex1}
\int_{ - \frac \pi 2}^{\frac \pi 2} L(f, \delta, \theta ) \cos^2 \theta  \, d\theta 
\leq \int_{ - \frac \pi 2}^{\frac \pi 2} L(f, \delta, \theta ) \cos \theta  \, d\theta 
 = -\frac{2\delta \log\delta}{1+a^2} + O(\delta)\ll \sqrt{\delta}\,.
\end{equation}

\paragraph{Example 2 :}
Let us consider $f(x)=\sqrt{x(1-x)}$ for $x\in [0,1]$.
The graph of $f$ is a half circle of radius $1/2$;  the tangent at the origin is vertical.
Assuming $\delta<1/2$,
we denote by $\theta _0\in (0,\frac{\pi}{2})$ the angle of the tangent to the graph of $f-\delta$ that
passes through the origin and by $x_0$ the first coordinate of the tangence point. One has
$\tan \theta _0=f'(x_0)$ and $f(x_0) -\delta + (0-x_0)\tan \theta _0=f(0)$.
A simple computation provides $x_0 =\frac{4\delta^2}{1+4\delta^2}$ and
$\theta _0 =\arctan(\frac{1-4\delta^2}{4\delta})=\frac{\pi}{2}-4\delta+O(\delta^2)$.

\bigskip  
\begin{figure}[H]
\begin{center}
\setlength{\unitlength}{1cm}
\includegraphics[width=13cm]{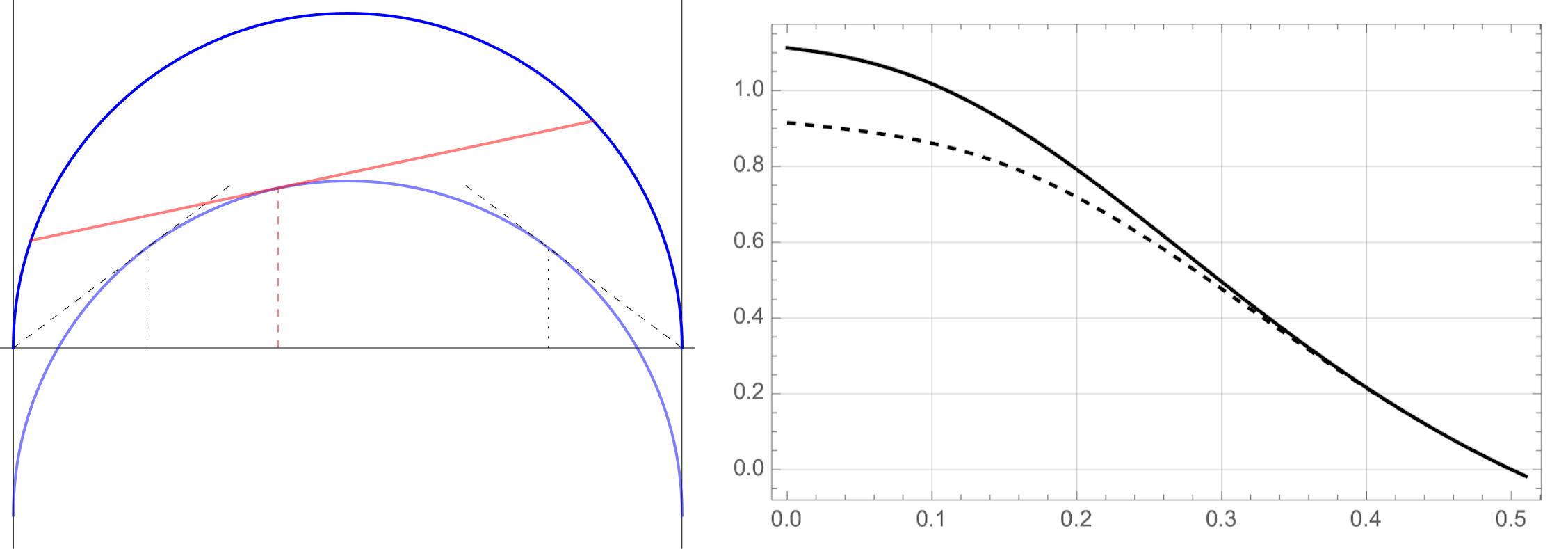}
\begin{picture}(0, 0)(13.1,0)
	\put(2.15,1.4){$x_\theta $}
	\put(1.1,1.4){\footnotesize $x_0$}
	\put(4.1,1.4){\footnotesize $1-x_0$}
	\put(-0.2,2.4){\color{red} $A$}
	\put(5,3.45){\color{red} $B$}
	\put(9,3.3){$C_1(\delta)$}
	\put(7,2.9){$C_2(\delta)$}
	\put(13,0.25){$\delta$}
\end{picture}
\captionsetup{width=.95\linewidth}
\caption{\label{fig:exemple2}%
Case of $f(x)=\sqrt{x(1-x)}$ from Example 2 (left).
The red segment $[AB]$ is of length $ L(f, \delta, \theta )$ and
the dashed lines mark the thresholds of the ``generic'' zone $\theta \in[-\theta _0,\theta _0]$.
The constants $C_1(\delta)$, $C_2(\delta)$ of the corresponding lower bounds (right) tend
to a non-zero value as $\delta\to0$ (see~\eqref{eq:defC1C2} below).
}
\end{center}
\end{figure}

For $\theta \in [-\theta _0, \theta _0]$ the longest segment $[AB]\subset S(f,\delta)$
of angle $\theta $ has both of its ends on the graph of~$f$
and is tangent to the graph of $f-\delta$.
Using the symmetry of the graph, one can assume that~$\theta \geq0$.
Let $(x_\theta ,f(x_\theta )-\delta)$ denote the point where $[AB]$ is tangent to the graph of $f-\delta.$
As $\tan \theta =f'(x_\theta )=\frac{1-2x_\theta }{2\sqrt{x_\theta (1-x_\theta )}}$, the equation of $[AB]$ gives
\[
4x_\theta ^2 - 4x_\theta  + \cos^2\theta =0.
\]
As $\theta \geq 0$ and $x_\theta <\frac{1}{2}$ by symmetry, then $x_\theta =\frac{1}{2}(1-\sin \theta ).$  
The abscissae $x_A$, $x_B$ of the endpoints satisfy
$f(x_\theta ) -\delta + (x-x_\theta )\tan \theta =f(x)$, which is equivalent to
\[
x^2 - x(1-\sin \theta  +\delta \sin2\theta  ) + \frac{1}{4}(1-\sin \theta  -2 \delta \cos \theta )^2=0\,.
\] 
Their difference $x_B-x_A=L(f, \delta, \theta ) \cos \theta $ is therefore given
by $x_B-x_A = \sqrt{\Delta}$ where $\Delta$ denotes
the discriminant, namely $\Delta = (1-\sin \theta  +\delta \sin2\theta )^2-(1-\sin \theta  -2 \delta \cos \theta )^2 =
4 \delta \cos^3 \theta  (1-\delta \cos\theta )$.
Using a similar estimate on $[-\theta _0,0]$ one gets the lower bounds
\[
\frac{1}{\pi}\int_{ - \frac \pi 2}^{\frac \pi 2} L(f, \delta, \theta ) \cos \theta  \, d\theta  \geq
 \frac{4\sqrt{\delta}}{\pi} \int_0^{\theta _0}\sqrt{1-\delta \cos\theta }   \cos^{3/2} \theta  \, d\theta  = C_1(\delta)\sqrt{\delta}
\]
and
\[
\frac{1}{\pi}\int_{ - \frac \pi 2}^{\frac \pi 2} L(f, \delta, \theta ) \cos^2 \theta  \, d\theta  \geq
 \frac{4\sqrt{\delta}}{\pi} \int_0^{\theta _0}\sqrt{1-\delta \cos\theta }   \cos^{5/2} \theta  \, d\theta  = C_2(\delta)\sqrt{\delta}\,.
\]
When $\delta\ll1$, the contributions to the integrals outside $[-\theta _0,\theta _0]$ are of a lower order. 
The dependence on $\delta$ of the lower bounds is illustrated in Figure~\ref{fig:exemple2}.

The functions $C_1$ and $C_2$ are continuous and strictly decreasing, with
\begin{equation}\label{eq:defC1C2}
C_1(0) = \frac{4\sqrt{2}K(1/2)}{3\pi} > 1.11283\,, \qquad
C_2(0) = \frac{12 \sqrt{2} \Gamma(3/4)^2}{5\pi^{3/2}} > 0.915311\,,
\end{equation}
where $K$ and $\Gamma$ are classical special functions
(respectively the complete elliptic integral of the first kind and the Gamma function).

\medskip
See Section~\ref{par:optimal} for an adaptation of this example to polynomials that
saturate the upper-bound on the complexity of the \A{} algorithm, both theoretically and in practice.

\paragraph{Example 3 :} We study $f(x)=-a(x-\frac{1}{2})^2$ for $x\in [0,1]$ with $a>0$
and with $\delta<a/4$.
Notice the symmetry with respect to the line $x=1/2$ and that the maximum of $f-\delta$
is~$-\delta$ and is superior to~$f(0)=-a/4.$ 
We denote by $\theta _0\in (0,\frac{\pi}{2})$ the angle  between the $x$-axis and
the line tangent to the graph of $f-\delta$ that passes through $(0,-a/4)$ and
by $x_0$ the first coordinate of the point where this tangent intersects the graph of $f$.
Substituting $\tan \theta _0=f'(x_0)$ in the equation $f(x_0) -\delta + (0-x_0)\tan \theta _0=f(0)$
ensures that $x_0=\sqrt{\delta/a}$ and $\tan \theta _0 = a-2\sqrt{a\delta}.$ 

\bigskip
\begin{figure}[H]
\begin{center}
\setlength{\unitlength}{1cm}
\includegraphics[width=8cm]{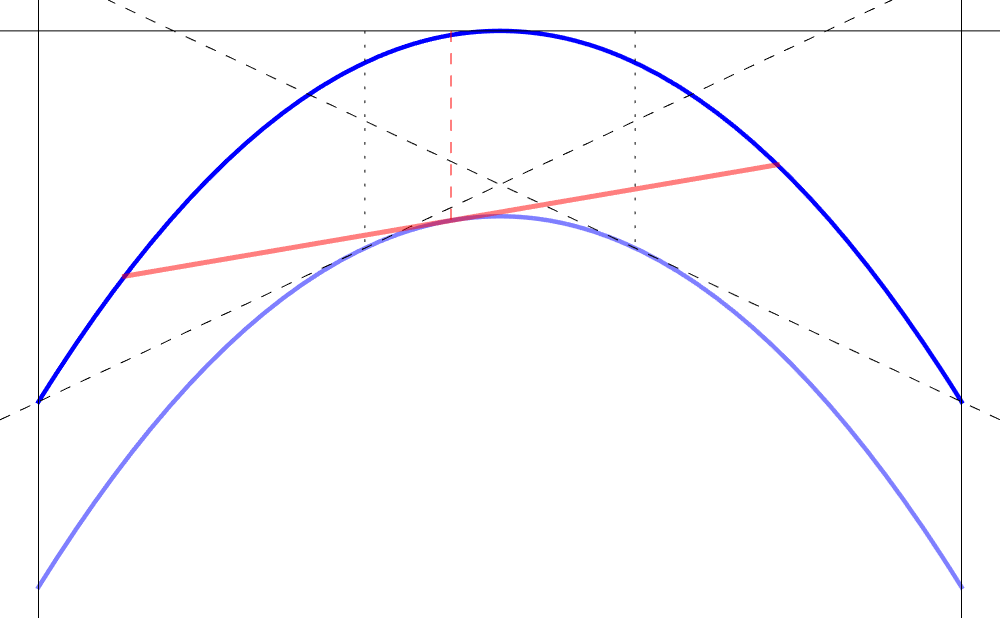}
\begin{picture}(0, 0)(8.1,0)
	\put(3.4,4.8){$x_\theta $}
	\put(2.75,4.8){\footnotesize $x_0$}
	\put(4.68,4.8){\footnotesize $1-x_0$}
	\put(0.5,2.6){\color{red} $A$}
	\put(6.4,3.6){\color{red} $B$}
\end{picture}
\captionsetup{width=.75\linewidth}
\caption{\label{fig:exemple3}%
Case of $f(x)=-a(x-\frac{1}{2})^2$ from Example 3. The markings are similar to those of Figure~\ref{fig:exemple2}.
}
\end{center}
\end{figure}

For $\theta \in (0, \theta _0)$ both ends of the longest segment $[AB]\subset S(f,\delta)$ of slope $\theta $
belong to the graph of $f$ and $[AB]$ is tangent to the graph of $f-\delta$.
As before, let $(x_\theta ,f(x_\theta )-\delta)$ denote the point where $[AB]$ is tangent to the graph of $f-\delta$
and $x_A$, $x_B$ the first coordinate of the endpoints.
Then $f'(x_\theta )=\tan \theta $ gives  $x_\theta =\frac{1}{2}-\frac{\tan \theta }{2a}$  
and the equation of $[AB]$ implies that $x_A$, $x_B$ satisfy
\[
a\left(x-\frac{1}{2}\right)^2 + \left(x-\frac{1}{2}\right) \tan \theta   +\frac{\tan^2 \theta }{4a}-\delta =0\,.
\]
The difference $x_B -x_A=L(f, \delta, \theta ) \cos \theta $
is thus given by $x_R-x_L = \sqrt{\Delta}/a$ with $\Delta =4a\delta$.
Using a similar estimate on $[-\theta _0,0]$ one gets the lower bounds
\[
\frac{1}{\pi}\int_{ - \frac \pi 2}^{\frac \pi 2} L(f, \delta, \theta ) \cos \theta  \, d\theta 
\geq \frac{2}{\pi} \int_0^{\theta _0} 2\sqrt \frac{\delta}{a}  \, d\theta  
= \frac{4\sqrt{\delta}}{\pi \sqrt a}  \arctan(a-2\sqrt{a\delta})\,,
\]
\[
\frac{1}{\pi}\int_{ - \frac \pi 2}^{\frac \pi 2} L(f, \delta, \theta ) \cos^2 \theta  \, d\theta 
\geq \frac{4\sqrt{\delta}}{\pi \sqrt a} \int_0^{\theta _0} \cos\theta   \, d\theta  
= \frac{4\sqrt{\delta}}{\pi \sqrt a}  \frac{a-2\sqrt{a\delta}}{\sqrt{1+(a-2\sqrt{a\delta})^2}}\,\cdotp
\]
Notice that for $\delta=a/4$ the right-hand side vanishes. For $\delta \ll a/4$, both bounds
are of order $\sqrt{\delta}$.
The first constant is approximately $\frac{4 \arctan(a)}{\pi \sqrt a}$,
whose maximum is $1.02288$ for $a\simeq 1.39175$. The second constant
becomes $\frac{4\sqrt{a}}{\pi(1+a^2)}$, whose maximum is $0.72559$ and is obtained
for $a\simeq 0.57735$.

\begin{remark}
The case $a=1/2$ in Example~3 corresponds to $f''(x)\equiv \textendash1$ for which
the estimate from Lemma~\ref{lemma:est-diagonal} is optimal (see Figure~\ref{fig:sqDiag}).
However, this example does not saturate the inequalities~\eqref{main_estim}-\eqref{main_estim_bis}.
\end{remark}

\section{Index of notations}\label{app:index}
We provide here a short index of our notations. By default, we use the American standard names, notations and spellings.

{\small
\paragraph{Numbers}
\begin{description}[itemsep=-0.3ex, labelsep=0pt, align=left]
\item $z=a+ib\in\C$ : complex numbers (with $a,b\in\R$).
\item $\lfloor\cdot\rfloor$ and $\lceil\cdot\rceil$ : resp.~floor and ceiling functions (round down/up to the next integer).
\item $\ln x$ :  natural logarithm.
\item $\log_b x = \frac{\ln x}{\ln b}$ : base-$b$ logarithm (for complexity, the default base is $b=2$). 
\item $x\simeq y$ : the numbers $x$ and $y$ have a similar order of magnitude (used colloquially).
\item $y\lesssim y$ : the order of magnitude of $x$ is smaller than or equal to that of $ys$ (used colloquially).
\end{description}

\paragraph{Asymptotic estimates} The asymptotic parameter $\sigma\to\sigma^\ast$ can be continuous or discrete and is given from context;
the signs (or complex phase) of $A,B$ are irrelevant.
\begin{description}[itemsep=-0.3ex, labelsep=0pt, align=left]
\item $A=O(B)$ : there exists a bounded function $C(\sigma)$ such that  $A(\sigma)=C(\sigma)B(\sigma)$.
\item $A\ll B$ : there exists a function $\varepsilon(\sigma)$ that tends to zero, such that $A(\sigma)=\varepsilon(\sigma) B(\sigma)$.
\end{description}

\paragraph{Sets}
\begin{description}[itemsep=-0.3ex, labelsep=0pt, align=left]
\item $\ii{m}{n}=\{m,m+1,\ldots,n-1,n\}$ : integer interval $[m,n]\cap\Z$.
\item $[a,b)$ : real-line interval, semi-open on the right side.
\item $\CC=\C\cup\{\infty\}$ : Riemann Sphere.
\item $\RR=\R\cup\{\infty\}$ : compaction of $\R$ into a circle.
\item $\# E$ : cardinal of a finite set.
\item $|E|$ : Lebesgue measure of a measurable set $E\subset\R^n$.
\end{description}

\paragraph{Polynomials}
\begin{description}[itemsep=-0.3ex, labelsep=0pt, align=left]
\item $\mathbb{K}[X]$ : set of polynomials with coefficients in the field $\mathbb{K}$ (typically $\R$ or $\C$).
\item $\mathbb{K}[[X]]$ : set of formal series with coefficients in the field $\mathbb{K}$.
\end{description}

\paragraph{Complexity} (see page~\pageref{def:complexity})
\begin{description}[itemsep=-0.3ex, labelsep=0pt, align=left]
\item $\eval_d$ : arithmetic complexity of evaluating a polynomial of degree $d$.
\item $\eval_d(k)$ : arithmetic complexity of $k$ polynomial evaluations of degree $d$.
\item $\eval_d(k,p)$ : bit complexity of evaluating a polynomial of degree $d$ on $k$ evaluation points with a fixed precision of $p$ bits
for all intermediary computations.
\item $M(p)$ : bit complexity of one multiply-add of two floating-point numbers with precision $p$.
\end{description}

\paragraph{Floating-point numbers} (see page~\pageref{equ:prec})
\begin{description}[itemsep=-0.3ex, labelsep=0pt, align=left]
\item $\xi = \pm 2^{n} \times 0.1\xi_1\xi_2 \ldots \xi_p$ : bit presentation of a floating point number.
\item $\ulp(\xi)$ : unit in the last place (smallest increment possible of the $p$-bit number $|\xi|$).
\end{description}

\paragraph{Floating-point representations of real and complex numbers} (see Section~\ref{par:equivModP})
\begin{description}[itemsep=-0.3ex, labelsep=0pt, align=left]
\item $s(z)$ : scale of a complex number (page~\pageref{equ:dsz}).
\item $x =_p y$ : $x,y\in\R$ have the same floating-point representation with precision $p \in \N^*$.
\item $x \simeq_p y$ : the $p$-bit floating-point representations of $x,y\in\R$ are identical or adjacent.
\item $z \approx_p z'$ : $z,z'\in\C$ have similar $p$-bits representations,
reduced by phase-shift invariance; see~\eqref{equ:eqpComplex}.
\end{description}

\paragraph{Concave geometry} (see Section~\ref{sect:analyse})
\begin{description}[itemsep=-0.3ex, labelsep=0pt, align=left]
\item subgraph : for a concave function $f$, region of the $(x,y)$-plane such that $y\leq f(x)$.
\item $S(f,\delta)$ : edge of the subgraph of $f$ of (vertical) thickness $\delta$.
\item $L(f, \delta, \theta)$ : maximal length of a segment of slope $\tan \theta $ contained in the strip $S(f, \delta)$.
\item Equation \eqref{eq:def_f_from_Ep} : definition of $f$ and $\delta$ in the FPE Algorithm application case.
\end{description}

\paragraph{FPE Algorithm} (see Section~\ref{sect:algo})
\begin{description}[itemsep=-0.3ex, labelsep=0pt, align=left]
\item $E_P:\ii{0}{d} \rightarrow \Z \cup \{-\infty\}$ : scales of the coefficients of the polynomial $P$.
\item $\ep:[0,d] \rightarrow \R \cup \{-\infty\}$ : concave cover of~$E_P$.
\item $\lambda = \log_2 |z| = -\tan \theta$ : dyadic scale of the evaluation point $z$.
\item $G_p$, $B_p$ : list of a-priori good (resp. ignored) coefficients for a given precision $p$.
\item $\ell$, $r$ : left/right edges to further reduce $G_p$ for a given $\lambda$.
\item $Q_\lambda(z)$ : reduced polynomial produced by the FPE algorithm.
\item $\A_p$ : new algorithm proposed in this article, for computations with a fixed precision $p$.
\item $\avg_{\CC}$ : average operator for $z$ uniformly distributed over $\CC$.
\item $\avg_{\RR}$ : average operator for $z$ uniformly distributed over $\RR$.
\item $\avg_{D(0,1)}$ : average operator for $z$ uniformly distributed over the unit complex disk.
\end{description}

}

\section{Listing of tasks implemented in~\cite{FPELib}}\label{FPETasks}

In our implementation~\cite{FPELib}, the tasks listed in this section are called in the command line with \texttt{FastPolyEval -task [arguments]}.
The first argument is systematically the precision of the computation, in bits.
Use \texttt{-task -help} for more detailed informations.

{\small\noindent%
\begin{longtable}{p{.20\textwidth} p{.80\textwidth}} 

    \multicolumn{2}{l}{\bf Tools for generating and handling polynomials}\\
   
    \tt -sum     &       computes the sum of two polynomials and writes the result to a CSV file\\
    \tt -diff        &   computes the difference of two polynomials\\
    \tt -prod     &      computes the product of two polynomials\\
    \tt -der        &    computes the derivative of a polynomial\\

    \tt -roots        &  computes the polynomial with a given list of roots\\
    \tt -Chebyshev &     writes the coefficients of the Chebyshev polynomial\\
    \tt -Legendre     &  writes the coefficients of the Legendre polynomial\\
    \tt -Hermite      &  writes the coefficients of the Hermite polynomial\\
   \tt  -Laguerre    &   writes the coefficients of the Laguerre polynomial\\
  \tt   -hyperbolic   &  writes the coefficients of the hyperbolic polynomial\\
    \\
    \multicolumn{2}{l}{\bf Tools for generating and handling sets of complex numbers}\\

    \tt -cat        &    concatenates two CSV files containing complex numbers\\
   \tt  -re      &       writes the real part of the list of complex numbers\\
   \tt  -im        &     writes the imaginary part of the list of complex numbers\\
  \tt   -conj        &   writes the conjugates of the list of complex numbers\\
  \tt   -join        &   joins the real part of two sequences into one sequence of complex numbers\\
 \tt    -tensor      &   computes the tensorial product of the two lists of numbers ($c_i = a_i \times b_i$)\\
  \tt   -grid        &   computes the set product of the real parts of two sequences\\
    \tt -exp        &    computes the complex exponential of a list of points\\
   \tt  -rot      &      maps complex numbers $(a, b)$ to $a*exp(ib)$\\

  \tt   -unif       &    writes real numbers in arithmetic progression\\
  \tt   -rand        &   writes real random numbers uniformly distributed in an interval\\
  \tt   -normal       &  writes real random numbers with Gaussian distribution\\
  \tt   -sphere  &       writes polar coordinates approximating a uniform distribution on the sphere\\
 \tt    -polar       &   computes the points given by polar coordinates on the sphere\\

   \tt  -comp    &       compares two lists of points\\
   \\ 
   \multicolumn{2}{l}{\bf Fast Polynomial Evaluator algorithm for production use and benchmarking}\\

   \tt  -eval   &        quickly evaluates a polynomial on a set of points\\
   \tt  -evalD   &       quickly evaluates the derivative of a polynomial on a set of points\\
   \tt  -evalN     &     quickly evaluates one Newton step of a polynomial on a set of points\\
  \tt   -iterN       &   quickly iterates the Newton method (partial search of roots of the polynomial)\\
  \tt -analyse  & computes the concave cover and the intervals of $|z|$ for which the evaluation strategy changes
\end{longtable}
} 

\bibliographystyle{alpha}\small
\bibliography{references}

\vspace*{3em}\noindent
$^{\small 1}$ \authorramona\\[1ex]
$^{\small 2}$ \authornicu\\[1ex]
$^{\small 3}$ \authorfv

\end{document}